\newtheorem{Theorem}[equation]{Th\'eor\`eme}
\newtheorem{Corollary}[equation]{Corollaire}
\newtheorem{Proposition}[equation]{Proposition}
\newtheorem{Lemma}[equation]{Lemme}
\newtheorem{Remark}[equation]{Remarque}
\newtheorem{Definition}[equation]{D\'efinition}
\newtheorem{Example}[equation]{Exemple}
\def\Section#1{\section{#1}\setcounter{equation}{0}}
\newenvironment{resume}{%
\begin{abstract}
}{\end{abstract}}
\newenvironment{biblio}{%
}
\font\smc=cmcsc10
\def\bdots{\mathinner{\mkern1mu\raise1pt\hbox{.}\mkern2mu\raise4pt\hbox{.}
           \mkern2mu\raise7pt\vbox{\kern7pt\hbox{.}}\mkern1mu}}
\def\Ex{E^\times}
\def\oF{{\mathfrak o}_F}
\def\oFx{{\mathfrak o}_F^\times}
\def\pF{{\mathfrak p}_F}
\def\oE{{{\mathfrak o}_E}}
\def\oEi{{\mathfrak o}_{E_i}}
\def\oEun{{\mathfrak o}_{E_1}}
\def\fa{{\mathfrak a}}
\def\fb{{\mathfrak b}}
\def\End{{\hbox{\rm End}}}
\def\Hom{{\hbox{\rm Hom}\,}}
\def\dim{\hbox{\rm dim}\,}
\def\det{\hbox{\rm det}\,}
\def\Ind{{\hbox{\rm Ind}}}
\def\ind{{\hbox{\rm ind}}}
\def\Res{{\hbox{\rm Res}}}
\def\cInd{c{\hbox{\rm-Ind}}}
\DeclareMathOperator{\Sp}{Sp}
\DeclareMathOperator{\GL}{GL}
\DeclareMathOperator{\SL}{SL}
\DeclareMathOperator{\SO}{SO}
\def\b{{\beta}}
\def\l{{\lambda}}
\def\L{{\Lambda}}
\def\ext{{\text{ext}\,}}
\def\interieur{{\text{int}}}
\def\calW{{\cal W}}
\def\PM{{}}
\def\PMopp{{-}}
\def\Interieur{{\hbox{\rm Int}}}
\def\today{\number\day\space
\ifcase\month\or
        Janvier\or F\'evrier\or Mars\or Avril\or Mai\or Juin\or
        Juillet\or Ao\^ut\or Septembre\or Octobre\or Novembre\or D\'ecembre\fi
 \space\number\year}
\begin{document}

\title{Repr\'esentation de Weil et $\b$-extensions}

\author{Corinne Blondel
}
\date{8 juillet 2010 }
\maketitle
%
%
%
\begin{resume}
Nous \'etudions les $\b$-extensions dans un groupe classique $p$-adique et   obtenons
une relation entre certaines $\b$-extensions   \`a l'aide d'une repr\'esentation
de Weil. Nous en donnons une application   \`a l'\'etude des points de  r\'eductibilit\'e de certaines induites paraboliques.

\noindent
{\textit{Mathematics Subject Classification (2000):} 22E50}
\end{resume}

\markboth{\smc Corinne Blondel}{Repr\'esentation de Weil et $\b$-extensions}

\section*{Introduction}

Soit $F$ un corps local non archim\'edien de caract\'eristique r\'esiduelle impaire.
 Soit $G_0$ un groupe  symplectique, sp\'ecial orthogonal ou unitaire
 sur $F$ (sur le corps des points fixes d'une involution sur $F$ dans le cas unitaire) et, pour $n \in \mathbb N$, soit $G_n$   le groupe classique de m\^eme nature que $G_0$ ayant
 un sous-groupe parabolique maximal $P_n$  de facteur de Levi $M_n$ isomorphe \`a
 $\GL(n,F) \times G_0$.
Soit $\sigma$ une repr\'esentation supercuspidale irr\'eductible de $G_0$ et  $ \pi $    une repr\'esentation supercuspidale   irr\'eductible
de $\GL(n, F)$.

L'\'etude des points de r\'eductibilit\'e de la repr\'esentation induite
$\ind_{P_n}^{G_n} \pi |\det|^s \otimes \sigma$, o\`u traditionnellement le param\`etre    $s$
est r\'eel, joue un r\^ole important en th\'eorie des repr\'esentations, aussi bien, depuis plus de trente ans, dans la classification des repr\'esentations lisses irr\'educti\-bles de $G_n$ et la d\'etermination de son dual unitaire, que plus r\'ecemment
dans l'\'etude de la correspondance de Langlands. La repr\'esentation induite ci-dessus,   normalis\'ee, est toujours irr\'eductible si $\pi$ n'est pas   autoduale
(au sens appropri\'e  du paragraphe
\ref{rqpr}) ; si $\pi$ est     auto\-duale,
dans presque tous les cas ses points de r\'eductibilit\'e  sont soit $s=0$, soit $s= \pm \frac 1 2 $.
 D'apr\`es les travaux de M\oe glin,   l'ensemble  $\text{Red}(\sigma)$
 des paires $(\pi, s)$, form\'ees d'une repr\'esentation supercuspidale autoduale irr\'eductible
$\pi$ d'un groupe $\GL(n, F)$ et d'un nombre r\'eel $s \ge 1$, telles que la repr\'esentation induite
$\ind_{P_n}^{G_n} \pi |\det|^s \otimes \sigma$ soit r\'eductible d\'etermine le $L$-paquet auquel appartient $\sigma$ et son image  dans la correspondance de Langlands ;
cet ensemble est fini et sa taille est connue.

    Or la th\'eorie des types et paires couvrantes  est un outil puissant d'\'etude de telles  induites, qui permet de traiter le cas, \'echappant aux m\'ethodes traditionnelles,  des repr\'esentations non g\'en\'eriques, et qui surtout devrait aboutir
    \`a une description explicite de $\text{Red}(\sigma)$, donc du $L$-paquet de $\sigma$ et de son image dans la correspondance de Langlands,   en fonction du type (ou d'un type) attach\'e \`a $\sigma$. Ce programme, initi\'e dans l'article fondateur   \cite{BK1}, est techniquement assez difficile puisque pour obtenir $\text{Red}(\sigma)$  il faut partir d'un type $(J_\sigma, \l_\sigma)$ pour $\sigma$, d'un type $(\tilde J_\pi, \tilde \l_\pi)$ pour une repr\'esentation supercuspidale autoduale irr\'eductible
$\pi$ d'un groupe $\GL(n, F)$ et d\'eterminer :
\begin{enumerate}
    \item une paire couvrante $(J, \l)$ de $(\tilde J_\pi\times J_\sigma, \tilde \l_\pi\otimes \l_\sigma)$ dans $G_n$ ;
    \item la structure de l'alg\`ebre de Hecke associ\'ee  $\mathcal H(G_n, \lambda )$ ;
    \item les points de r\'eductibilit\'e {\it complexes} de l'induite ci-dessus via des \'equivalences de cat\'egories transformant l'induction parabolique en induction
    des $\mathcal H(M_n, \tilde \l_\pi\otimes \l_\sigma) )$-modules en $\mathcal H(G_n, \lambda )$-modules (voir le paragraphe
    \ref{rqpr}, essentiellement ind\'ependant du reste de  l'article). Un type pour $\pi$ est insensible \`a la torsion par un caract\`ere non ramifi\'e, c'est pourquoi le param\`etre $s$ varie ici dans $\mathbb C$ ; cela revient \`a \'etudier ensemble deux repr\'esentations induites \`a param\`etre r\'eel (\S \ref{rqpr}).
\end{enumerate}
La faisabilit\'e de ce programme est attest\'ee par \cite{BB} qui le m\`ene \`a son terme
pour les sous-groupes de Levi $\GL(1,F) \times \Sp(2,F)$ et $\GL(2,F)$ de $\Sp(4,F)$.
Dans un travail en cours avec Guy Henniart et Shaun Stevens, nous le mettons en \oe uvre pour d\'eterminer explicitement les $L$-paquets de $\Sp(4,F)$  en termes de types,  \`a partir de la liste des types de repr\'esentations supercuspidales g\'en\'eriques et non g\'en\'eriques
de $\Sp(4,F)$ \'etablie dans \cite{BS} (Gan et Takeda   d\'ecrivent ces $L$-paquets
dans \cite{GT} par une m\'ethode diff\'erente ne permettant pas une telle explicitation).

Dans un groupe classique plus g\'en\'eral et pour une repr\'esentation induite de la forme ci-dessus, la construction d'une paire couvrante (i) a   \'et\'e effectu\'ee dans certains cas particuliers~: le sous-groupe de Levi de Siegel d'un groupe symplectique dans  \cite{B}, le sous-groupe de Levi de Siegel d'un groupe classique dans \cite{GKS} o\`u la structure th\'eorique de l'alg\`ebre de Hecke (ii) est d\'etermin\'ee, une repr\'esentation
supercuspidale autoduale de niveau z\'ero du Levi de Siegel d'un groupe symplectique ou sp\'ecial orthogonal dans \cite{KM} qui donne en outre l'\'etude de r\'eductibilit\'e (iii).
Plus g\'en\'eralement, des
  paires couvrantes accompagn\'ees de la structure th\'eorique de l'alg\`ebre de Hecke associ\'ee sont   fournies par \cite{S5}, sous des hypoth\`eses assez larges,
  en particulier celles  du paragraphe \ref{pc} ci-dessous, que nous supposerons v\'erifi\'ees
  dans la suite de cette introduction : il s'agit essentiellement de demander que
    $\pi$ et $\sigma$ soient attach\'ees \`a des strates suf\-fisamment ``disjointes'' du point de vue de la th\'eorie des types et \`a des caract\`eres semi-simples compatibles (cf. remarque \ref{sigmapi}).

\bigskip

Dans presque tous les cas qui nous int\'eressent ici, la
 structure th\'eorique de  l'alg\`ebre  de Hecke $\mathcal H(G_n, \lambda )$   obtenue est  la m\^eme : il s'agit d'une alg\`ebre de convolution \`a deux g\'en\'erateurs $T_0$ et $T_1$ sur un groupe de Weyl affine qui ne d\'epend que de $\pi$. Le g\'en\'erateur $T_i$, $i=0,1$, est l'image par une injection d'alg\`ebres du g\'en\'erateur d'une alg\`ebre de Hecke
 $\mathcal L_i= \mathcal H(\mathcal G_i, \rho_i)$ de dimension~$2$, l'alg\`ebre d'entrelacements de l'induite parabolique d'une repr\'esentation cus\-pi\-dale  d'un sous-groupe de Levi maximal $\mathcal M_i$   dans un groupe r\'eductif fini. La connaissance de
 $\mathcal G_i$  et $\rho_i$ doit permettre de calculer la relation quadratique satisfaite par $T_i$ \`a l'aide des travaux de Lusztig (voir \cite{KM}). Ces deux relations quadratiques suffisent \`a d\'eterminer les parties r\'eelles des points de r\'eductibilit\'e de la repr\'esentation induite consid\'er\'ee (\S \ref{rqpr}) et donc \`a pr\'edire s'il s'agit d'une situation ``ordinaire'' avec r\'eductibilit\'es de parties r\'eelles $0$ ou $\pm \frac 1 2$, ou d'une situation fournissant un \'el\'ement de l'ensemble $\text{Red}(\sigma)$  cherch\'e.

     Mais le diable est dans les d\'etails :    $\rho_i$ est ``la partie de niveau z\'ero du type $\l$'', notion \`a laquelle on ne peut donner un sens que ``relativement \`a $\mathcal G_i$'' (en prenant un peu de libert\'e avec le vocabulaire). En effet $\rho_i$  est d\'etermin\'ee par une \'ecriture du type $\l$ sous la forme $\kappa_i \otimes \rho_i$ o\`u $\kappa_i$ est une {\it $\b$-extension relativement \`a $\mathcal G_i$} \cite{S5}. Il en r\'esulte que $\rho_i$ n'est a priori connue qu'\`a un caract\`ere (\'eventuellement muni de propri\'et\'es suppl\'ementaires) pr\`es et la structure de $\mathcal L_i$ peut  en  d\'ependre (voir les exemples  \ref{exemplebeta} et  \ref{exemple} et le paragraphe
     \ref{exemples}).
 C'est pourquoi la r\'ealisation du programme ci-dessus doit passer par une analyse approfondie de la notion de $\b$-extension, dont le pr\'esent article constitue une \'etape.

Une description plus pr\'ecise des alg\`ebres  $\mathcal L_i$  qui, redisons-le, d\'eterminent   les parties r\'eelles des points de r\'eductibilit\'e de $\ind_{P_n}^{G_n} \pi |\det|^s \otimes \sigma$,
fait appara\^itre que leur d\'ependance en $\sigma$ r\'eside exclusivement dans la d\'etermination de  $\rho_i$, c'est-\`a-dire dans la d\'etermination des $\b$-extensions attach\'ees \`a la situation,   l'effet de $\sigma$ \'etant simplement de modifier le choix de $\rho_i$ par torsion par un caract\`ere.
   Il est   naturel dans ces conditions   de comparer les parties r\'eelles des points de  r\'eductibilit\'e des
 induites     $\ind_{P_n}^{G_n} \pi |\det|^s \otimes \sigma$ et
    $\ind_{Q_n}^{H_n} \pi |\det|^s  $, o\`u  $H_n$ est le groupe classique de m\^eme nature que $G_0$ ayant
 un sous-groupe parabolique maximal  $Q_n$  de facteur de Levi     isomorphe \`a $\GL(n,F)$.
Comme on vient de le voir,
   cette comparaison se ram\`ene \`a une comparaison  entre repr\'esentations cuspidales d'un m\^eme groupe  fini  pouvant diff\'erer d'une torsion par un  caract\`ere.

    C'est la d\'etermination de ce caract\`ere,  intrins\`equement li\'e \`a la notion de $\b$-extension,  qui est l'enjeu du travail qui suit. Il s'agit en r\'ealit\'e de    comparer   $\b$-extensions dans le groupe $ G_n$ et $\b$-extensions dans son sous-groupe $H_n \times G_0$. La solution est donn\'ee dans le th\'eor\`eme \ref{theoreme}, r\'esultat principal de l'article. Elle est assez simple (si l'on omet d'\'enoncer les hypoth\`eses et notations)~: les  $\b$-extensions consid\'er\'ees dans     $ G_n$ et dans  $H_n \times G_0$ diff\`erent d'un caract\`ere qui est la si\-gna\-ture d'une permutation explicite, l'action par conjugaison   sur un   groupe  fini  attach\'e  \`a la situation.

     Revenons pour terminer \`a la motivation initiale : la  description explicite de $\text{Red}(\sigma)$. On s'attend \`a ce que les repr\'esentations autoduales $\pi$ intervenant dans $\text{Red}(\sigma)$ pr\'esentent une forte affinit\'e, du point de vue des types, avec $\sigma$ (comme dans l'exemple final de \cite{B2} o\`u
     la paire couvrante relative \`a $\pi \otimes \sigma$ est attach\'ee \`a  une strate
     {\it simple} et o\`u l'on obtient un point de r\'eductibilit\'e de partie r\'eelle $1$).
Or nous nous sommes plac\'es
     dans une situation o\`u  au contraire, les strates sous-jacentes \`a   $\pi$ et $\sigma$ sont  suppos\'ees ``disjointes''
   (hypoth\`eses du paragraphe  \ref{pc},  cf   remarque \ref{sigmapi}).  De fait
   c'est dans le cas o\`u $\pi$ est un caract\`ere autodual de $\GL(1,F)$ et o\`u
   la strate semi-simple sous-jacente \`a $\sigma$ n'a pas de composante nulle
   que nos r\'esultats trouvent leur  premi\`ere application. Dans le cas symplectique, chaque alg\`ebre  $\mathcal L_i$
   est l'alg\`ebre  d'entrelacement d'un caract\`ere quadratique ou trivial du sous-groupe de
   Borel de $\SL(2,k_F)$ et l'on sait \`a quel point sa structure d\'epend de ce caract\`ere
   (\S      \ref{exemples}) :  la signature de la permutation du th\'eor\`eme \ref{theoreme}
  d\'etermine ici la ramification du caract\`ere autodual $\pi$ figurant dans  $\text{Red}(\sigma)$.

\bigskip
    L'article est divis\'e en trois parties   :
    mise en place des propri\'et\'es des $\b$-extensions ;
    cons\-truction d'une repr\'esentation de Weil permettant d'obtenir le caract\`ere cherch\'e ;  application \`a l'\'etude de r\'eductibilit\'e.

La premi\`ere partie    est technique, centr\'ee sur les notions fondamentales de bijection cano\-nique et de $\b$-extension d\'efinies par   Stevens dans  \cite{S5}. Apr\`es avoir rappel\'e les d\'efinitions essentielles (\S \ref{facts}), on donne au paragraphe \ref{propbij} une caract\'erisation de la bijection canonique en termes d'entrelacement. Le paragraphe \ref{subord} est consacr\'e \`a la mise en place d'un foncteur de restriction de Jacquet
$\mathbf r_P$  dans une situation l\'eg\`erement plus g\'en\'erale que celle de  \cite{S5} (dont il s'inspire  largement, voir {\it loc. cit.} \S 5.3 et 6.1), o\`u le sous-groupe parabolique $P$ est  attach\'e \`a une d\'ecomposition qui est subordonn\'ee \`a la strate consid\'er\'ee, sans lui \^etre proprement subordonn\'ee.
Le r\'esultat essentiel (proposition \ref{diagcom2}) est  d\'emontr\'e au paragraphe \ref{Jacquet} :  il s'agit de la   compatibilit\'e entre le foncteur $\mathbf r_P$ et la bijection canonique sur laquelle repose la notion de $\b$-extension.

La deuxi\`eme partie, que l'on d\'ecrit ici dans le contexte simplifi\'e ci-dessus,   part d'un caract\`ere semi-simple $\theta$ dans $G_n$ convenablement d\'ecompos\'e par rapport au sous-groupe  $H_n \times G_0$ et relie les repr\'esentations associ\'ees $\eta$ et $\kappa$ dans $G_n$ \`a des objets analogues $\eta'$ et $\kappa'$ dans $H_n \times G_0$
(\S \ref{remarquables}). La relation obtenue est pr\'ecis\'ee au paragraphe 2.2 \`a
l'aide  d'un foncteur de restriction de Jacquet relatif \`a un sous-groupe parabolique de
Levi $\GL(n,F) \times G_0$.
On cons\-truit alors   une repr\'esentation ``de Weil''  $\mathbb W$ telle que la famille finie des repr\'esentations $\kappa$ et celle des $\kappa'$ soient reli\'ees par
$\kappa \simeq \mathbb W \otimes \kappa'$ (\S \ref{WeilRep}). Gr\^ace aux propri\'et\'es de cette repr\'esentation de Weil, on montre, lorsque le caract\`ere $\theta$ est attach\'e \`a un ordre autodual maximal,  que la repr\'esentation  $\kappa$ est une $\b$-extension si et seulement si  $\kappa'$ en est une (\S 2.4).
 L'usage de la compatibilit\'e entre restriction de Jacquet et   bijection canonique  montr\'ee en \ref{Jacquet} aboutit au   th\'eor\`eme \ref{theoreme}, qui d\'ecrit le caract\`ere par lequel diff\`erent les $\b$-extensions dans $G_n$ et     $H_n \times G_0$.

La troisi\`eme partie se rapproche de la motivation initiale en expliquant comment les r\'esultats obtenus s'appliquent \`a l'\'etude de r\'eductibilit\'e.
Nous commen\c cons par r\'esumer les r\'esultats de Stevens \cite{S5} dans le contexte
simplifi\'e d'une d\'ecomposition autoduale en trois morceaux (\S \ref{pc}, ind\'ependant des deux premi\`eres parties). On y voit (corollaire \ref{dernier})  que la notion de $\b$-extension est cruciale pour trouver les g\'en\'erateurs de l'alg\`ebre de Hecke de la paire couvrante consid\'er\'ee.
Nous d\'etaillons ensuite le lien entre les relations quadratiques sa\-tis\-faites par les deux  g\'en\'erateurs d'une alg\`ebre de Hecke correspondant \`a notre situation et les points de r\'eductibilit\'e des repr\'esentations induites associ\'ees (\S \ref{rqpr}, proposition  \ref{formule}). Il reste \`a tirer
  quelques conclusions : la compa\-rai\-son des $\b$-extensions dans $G_n$ et     $H_n \times G_0$
  faite dans le  th\'eor\`eme \ref{theoreme} permet de comparer les alg\`ebres de Hecke
associ\'ees, on passe des g\'en\'erateurs de l'une \`a ceux de l'autre par torsion par des carac\-t\`eres  donn\'es par ce th\'eor\`eme
  (\S \ref{conclusion}, proposition \ref{w0}). Nous terminons par    des exemples dans un groupe symplectique  (\S \ref{exemples}), illustrant la relation entre la notion essentielle de $\b$-extension et les points de r\'eductibilit\'e d'induites paraboliques.

\noindent  {\small
 {\it Remerciements. }  J'aimerais remercier Shaun Stevens pour m'avoir expliqu\'e certaines  subtilit\'es des   caract\`eres semi-simples  et $\b$-extensions et pour ses commentaires   d'une premi\`ere version de ce manuscrit, et Laure Blasco, Guy Henniart, Colette M\oe glin et Shaun Stevens pour des discussions tr\`es stimulantes
 \`a diff\'erents stades de ce travail.
 }

\section*{Notations}

 On travaille dans cet article sur les objets d\'efinis par Shaun Stevens dans \cite{S5}  et des articles ant\'erieurs,  et qui trouvent leur origine dans \cite{BK} et \cite{BK2}. 
 On respecte autant que possible   les notations de \cite{S5},  
  en les simplifiant parfois comme indiqu\'e ci-dessous. 
 
 Soit $F$ un corps local non-archim\'edien 
 de caract\'eristique r\'esiduelle impaire $p$   muni d'un automorphisme 
 not\'e $x \mapsto \bar x$ d'ordre $1$ ou $2$ et de points fixes $F_0$. On note $k_F$ son corps r\'esiduel, de cardinal $q_F=q$.  
 On consid\`ere  le groupe classique $G^+$ des automorphismes 
 d'un espace vectoriel $V$ de dimension finie sur $F$ conservant une forme $\epsilon$-hermitienne (relativement \`a l'automorphisme $x \mapsto \bar x$, avec $\epsilon = \pm 1$) non 
 d\'eg\'en\'er\'ee $h$ sur $V$.  Dans le cas orthogonal on d\'efinit $G$ comme le  groupe sp\'ecial orthogonal des \'el\'ements de $G^+$ de d\'eterminant $1$, dans les autres cas on pose $G=G^+$. 
 On note aussi $g \mapsto \bar g$ l'involution adjointe sur 
 $\End_F(V)$ associ\'ee \`a $h$, de sorte que 
 $G^+ = \{ g \in \GL_F(V) / \bar g = g^{-1} \}$. En g\'en\'eral on notera avec des 
 tildes les objets relatifs au groupe lin\'eaire $\tilde G = \GL_F(V)$, sans tilde 
 ceux relatifs \`a $G$, et avec un exposant $+$ ceux relatifs \`a $G^+$ (dont les  $p$-sous-groupes sont contenus dans $G$).

 Soit $M$ un sous-groupe de Levi de $G$ et $\sigma$ une repr\'esentation supercuspidale irr\'eductible de $M$ ; on note $\mathcal R^{[ \sigma, M]} (G)$ la sous-cat\'egorie pleine des repr\'esentations lisses complexes de $G$ dont chaque sous-quotient  irr\'eductible est sous-quotient  d'une repr\'esentation induite parabolique d'une repr\'esentation de $M$ tordue de $\sigma$ par un caract\`ere non ramifi\'e. 
 
 Soit $J$ un sous-groupe compact ouvert de $G$ et $\lambda$ une repr\'esentation lisse irr\'eductible de $J$ d'espace $E$, on note $\mathcal H(G, \lambda )$  l'alg\`ebre de Hecke de $\lambda$ dans $G$. C'est l'alg\`ebre de convolution (relativement \`a la mesure de Haar sur $G$ donnant \`a $J$ le volume $1$) des fonctions lisses \`a support compact 
 $ f : G \longrightarrow \End_\mathbb C(E) $ v\'erifiant : 
 
\centerline{ $\forall x, y \in J, \forall g \in G, 
 f(xgy) = \lambda(x) f(g) \lambda(y)$. }
 On note  $\text{Mod-}\mathcal H(G, \lambda ) $ la cat\'egorie des modules \`a droite unitaux sur cette alg\`ebre. 
\medskip

L'objet de base de cet article est une {\it strate gauche semi-simple} $[\L,n,0,\b]$ de  $\End_F(V)$ 
\cite[Definition 2.5]{S5} dont on note $V = V^1 \perp \cdots \perp V^l$    la   d\'ecomposition orthogonale de $V$ associ\'ee. Rappelons l'essentiel~: 
\begin{itemize}
	\item 
$\beta$ est un \'el\'ement de $\End_F(V)$ v\'erifiant 
$\bar \beta = -\beta$ et somme de ses restrictions $\beta_i$ \`a $V^i$, \'el\'ements de $\End_F(V^i)$ tels que $F[\beta_i]= E_i$ soit un corps pour  $1 \le i \le l$. De plus $E=F[\beta]$ est la somme $E =  E_1 \oplus \cdots \oplus E_l$ et le centralisateur $B$ de $\beta$ dans 
$\End_F(V)$ est la somme des commutants $B_i$ de $\beta_i$ dans $\End_F(V^i)$, $1 \le i \le l$. 
On note $\tilde G_E = B^\times$,   $  G_E^+ = B^\times \cap G^+$ et $  G_E  = B^\times \cap G $. 
\item 
$\L$ est une suite autoduale de $\oF$-r\'eseaux de $V$ v\'erifiant pour tout $r \in \mathbb Z$ : 
$\L(r) = \oplus_{i=1}^l \L(r) \cap V^i$,  et la suite de r\'eseaux $\L^i$ de $V^i$ d\'efinie par  
$\L^i (r) = \L(r) \cap V^i$ est une suite autoduale de $\oEi$-r\'eseaux, $1 \le i \le l$.  
On note $\mathfrak a_0(\L)$ le fixateur de la suite $\L$ dans $\End_F(V)$,   muni d'une filtration  par les $\mathfrak a_n(\L)$, $n\ge 1$ entier, form\'es des \'el\'ements d\'ecalant la suite de $n$ ; de m\^eme pour   
  $\mathfrak b_0(\L)= \mathfrak a_0(\L)\cap B$ et $\mathfrak b_n(\L)= \mathfrak a_n(\L)\cap B$. 
On note enfin :  
$\widetilde P(\L)= \mathfrak a_0(\L)^\times$, $  P^+(\L)= \widetilde P(\L)\cap G^+$,  
$  P (\L)= \widetilde P(\L)\cap G $, $  P^+(\L_\oE)=   P^+(\L) \cap B$, 
$  P (\L_\oE)=   P (\L) \cap B$, $P_1(\L_\oE)=  P (\L_\oE) \cap (1+\mathfrak b_1(\L))$  et $  P^0(\L_\oE)$ l'image inverse dans   $  P (\L_\oE)$ de la composante neutre du quotient 
 $  P (\L_\oE)/ P_1(\L_\oE)$ \cite[\S 2.1]{S5}. 
\end{itemize}
Les groupes   $\tilde H^1(\b, \L)$, $\tilde J^1(\b, \L)$, $\tilde J (\b, \L)$   d\'esignent les sous-groupes 
    de $\tilde G$ relatifs \`a une 
   strate semi-simple    d\'efinis dans \cite[\S 3]{S4} (apr\`es  \cite{BK} et \cite{BK2}). 
   Pour une strate semi-simple gauche on note  
$H^1(\b, \L)$, $J^1(\b, \L)$, $  J (\b, \L)$ leurs intersections avec $G$, et on note 
 $  J^+ (\b, \L)= \tilde J (\b, \L) \cap G^+ $. On pose enfin 
 $J^0 (\b, \L)= P^0(\L_\oE) J^1(\b, \L)$ -- rappelons que $J  (\b, \L)= P (\L_\oE) J^1(\b, \L)$. 
  On raccourcira  en g\'en\'eral  $H^1(\b, \L)$ etc. 
 en $H^1(\L)$ etc., 
 puisque la  plupart du temps, dans les strates gauches semi-simples $[\L,n,0,\b]$ 
 que l'on consid\'erera,   
  l'\'el\'ement $\b$ sera fix\'e. 
 De m\^eme on utilisera de fa\c con abusive la notation $\oE$, 
 comme dans $\oE$-r\'eseaux  ou $P^+(\L_{\oE})$  \cite[\S 2]{S5}, 
 en omettant les indices ou exposants auxquels le $E$ pourrait pr\'etendre, 
 de fa\c con \`a \'eviter d'\'ecrire   
 ``${\mathfrak o}_{E^{(0)}}$-r\'eseaux''  
 ou ``$P^+(\L_{{\mathfrak o}_{E^{(j)}}})$''.

 Dans toute la suite on d\'esigne par   $[\L,n,0,\b]$,  $[\L',n',0,\b]$, $[\L^m,n^m,0,\b]$,    $[\L^M,n^M,0,\b]$    
 des strates gauches semi-simples ayant en commun l'\'el\'ement $\b$, donc aussi  la   d\'ecomposition de $V$ 
 en $V = V^1 \perp \cdots \perp V^l$, 
 et telles que 
\begin{itemize}
	\item $\fb_0(\L^m)\subseteq  \fb_0(\L)$ ;  $\fb_0(\L)= \fb_0(\L')$ ;   $\fb_0(\L) \subseteq   \fb_0(\L^M)$ ; 
	\item $\fb_0(\L^M)$ est un $\oE$-ordre autodual maximal. 
\end{itemize}
 On fixe un caract\`ere semi-simple gauche $\theta$ de $H^1(\L)$ \cite[\S 3.6]{S4} et on note 
 $\theta'$, $\theta^m$,   $\theta^M$ les caract\`eres de $H^1(\L')$, $H^1(\L^m)$ 
  et $H^1(\L^M)$ qui lui correspondent par transfert
 \cite[Proposition 3.2]{S5}. On note $\eta$ (resp. $\eta'$, $\eta^m$,   $\eta^M$) l'unique repr\'esentation irr\'eductible de 
 $J^1(\L)$ (resp.  $J^1(\L')$,  $J^1(\L^m)$,   $J^1(\L^M)$) contenant $\theta$ (resp. $\theta'$, $\theta^m$,  $\theta^M$). 
 
 Si $[\L^\star,n^\star,0,\b]$, o\`u $\star$ d\'esigne n'importe quel symbole, est une autre telle strate, on utilisera de la m\^eme fa\c con les notations $\theta^\star$, $\eta^\star$ etc., ou bien aussi $\theta(\L^\star)$, $\eta(\L^\star)$ etc. 

\Section{Quelques propri\'et\'es des $\beta$-extensions}\label{S1}

  Pour la commodit\'e de la r\'edaction on rappelle avant de commencer un fait bien connu et  \'el\'ementaire  mais tr\`es utile.
 Soit $H$ et $H^\prime$ des sous-groupes ouverts compacts d'un groupe l.c.t.d. $K$ et 
 $\rho$ et $\rho^\prime$ des repr\'esentations de dimension finie de $H$ et $H^\prime$ respectivement. 
 Alors   
 \begin{equation} \label{obvious}
 \Hom_{H \cap H^\prime} ( \rho, \rho^\prime) \ne \{0\} 
 \Longrightarrow 
 \Hom_{K} ( \Ind_H^K \rho, \Ind_{H^\prime}^K  \rho^\prime) \ne \{0\} . 
\end{equation}

\subsection{Faits essentiels}\label{facts} 

Rappelons les r\'esultats suivants de \cite{S5}, qui jouent un r\^ole crucial dans la suite. 

 \begin{Proposition}[\bf  \cite{S5} Proposition 3.7] \label{etamM}    
Il existe une unique repr\'esentation irr\'eductible  $\eta(\L^m, \L)$ de 
$J^1(\L^m, \L)=  P_1(\L^m_\oE)J^1(\L)$ v\'erifiant : 
\begin{enumerate}
	\item  $\eta(\L^m, \L)_{|J^1(\L)} = \eta$ ; 
	\item Pour toute suite de $\oE$-r\'eseaux $\L^{\prime\prime} $ v\'erifiant $\fb_0(\L^{\prime\prime})= \fb_0(\L^m)$ et $\fa_0(\L^{\prime\prime})\subseteq \fa_0(\L)$ on a 
	$$ 
\Ind_{J^1(\L^{\prime\prime} )}^{  P_1(\L^{\prime\prime} )} \, \eta^{\prime\prime} 
\;  \simeq 
\; 
\Ind_{P_1(\L_{\oE}^m) J^1(\L )}^{  P_1(\L^{\prime\prime})} \,  \eta(\L^m, \L).   
	$$
\end{enumerate}
\end{Proposition}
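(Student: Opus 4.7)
The plan is to build $\eta(\L^m,\L)$ as a canonical extension of $\eta$ to $J^1(\L^m,\L)$, and then verify (ii) by comparing both sides inside $P_1(\L^{\prime\prime})$ via Mackey.

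First I would check that $J^1(\L^m,\L)=P_1(\L^m_\oE)J^1(\L)$ is indeed a group and that $P_1(\L^m_\oE)$ normalizes $H^1(\L)$, $J^1(\L)$ and fixes $\theta$. Since $\fb_0(\L^m)\subseteq\fb_0(\L)$, an element of $P_1(\L^m_\oE)$ lies in $1+\fb_1(\L^m)\subseteq P(\L_\oE)\subseteq J(\L)$; the standard commutator identities for semisimple strata from \cite{S4} then yield these stability properties, using that $\theta^m$ is by construction the transfer of $\theta$. I would then extend $\theta$ to a character $\theta(\L^m,\L)$ of $H^1(\L^m,\L):=P_1(\L^m_\oE)H^1(\L)$ via the transfer formalism, and define $\eta(\L^m,\L)$ to be the unique irreducible representation of $J^1(\L^m,\L)$ containing $\theta(\L^m,\L)$. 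Its existence and uniqueness follow from a Heisenberg-type argument as in \cite{BK}, generalized in \cite{S4} to the semisimple setting; property (i) then holds by construction.

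For property (ii), take an admissible $\L^{\prime\prime}$. Since $\fb_0(\L^{\prime\prime})=\fb_0(\L^m)$ we have $P_1(\L^m_\oE)=P_1(\L^{\prime\prime}_\oE)\subseteq P_1(\L^{\prime\prime})$, and since $\fa_0(\L^{\prime\prime})\subseteq\fa_0(\L)$ we get $J^1(\L)\subseteq P_1(\L^{\prime\prime})$; both $J^1(\L^{\prime\prime})$ and $J^1(\L^m,\L)$ therefore sit inside $P_1(\L^{\prime\prime})$. Both $\eta^{\prime\prime}$ and $\eta(\L^m,\L)$ contain the same semisimple character (the common transfer of $\theta$), so by Frobenius reciprocity and the elementary observation recalled at the start of the section, the two inductions in (ii) admit a common irreducible subquotient. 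A direct Mackey computation, matching the indices $[P_1(\L^{\prime\prime}):J^1(\L^{\prime\prime})]$ and $[P_1(\L^{\prime\prime}):J^1(\L^m,\L)]$ with $\dim\eta^{\prime\prime}$ and $\dim\eta(\L^m,\L)$, will force the two inductions to be isomorphic.

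The main obstacle is this last dimension comparison: it relies on the precise Heisenberg-Glauberman formulas giving $\dim\eta^\star$ in terms of the semisimple character data, and on matching the resulting lattice-chain indices. The semisimple setting entails extra bookkeeping (block decomposition coming from $V=V^1\perp\cdots\perp V^l$, and mixing between blocks) over the simple case treated in \cite{BK}, but should be handled by the technical results of \cite{S4, S5}.
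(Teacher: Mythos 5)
Your proposal does not follow the route of the paper, and as written it has a real gap. The paper's own proof is a two-line reduction: the statement \emph{is} \cite[Proposition 3.7]{S5}, proved there under the stronger hypothesis $\fa_0(\L^m)\subseteq\fa_0(\L)$; in the general case $\fb_0(\L^m)\subseteq\fb_0(\L)$ one chooses an auxiliary sequence $\L''$ with $\fb_0(\L'')=\fb_0(\L^m)$ and $\fa_0(\L'')\subseteq\fa_0(\L)$ (such sequences exist by \cite[Lemme 2.8]{S5}), observes that the group $P_1(\L''_\oE)J^1(\L)$ and the representation $\eta(\L'',\L)$ depend only on $\fb_0(\L'')$ and not on $\L''$ itself, and sets $\eta(\L^m,\L):=\eta(\L'',\L)$. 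Your proposal contains no trace of this reduction; instead you set out to reprove Stevens' proposition from scratch, which is a legitimate but much heavier undertaking.

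Within that undertaking, the decisive steps are precisely the ones you defer. First, ``common irreducible subquotient plus matching dimensions forces the two inductions to be isomorphic'' is only valid once \emph{both} induced representations are known to be irreducible; this requires the intertwining computations $I(\eta'')\cap P_1(\L'')=J^1(\L'')$ and $I(\eta(\L^m,\L))\cap P_1(\L'')=P_1(\L^m_\oE)J^1(\L)$ (compare the proof of Lemma \ref{mult} in the paper), and your ``direct Mackey computation'' does not name them. Second, the dimension identity $[P_1(\L''):J^1(\L'')]\,\dim\eta''=[P_1(\L''):P_1(\L^m_\oE)J^1(\L)]\,\dim\eta$, which you correctly identify as the main obstacle, is not a piece of bookkeeping to be handled ``by the technical results of \cite{S4,S5}'': it \emph{is} the content of \cite[Proposition 3.7]{S5} (after \cite[5.1.15--5.1.16]{BK}), so deferring it means the proof is circular or empty. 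Finally, uniqueness is asserted in the statement but never addressed: an extension of $\eta$ to $J^1(\L^m,\L)$ is only determined up to a character of $P_1(\L^m_\oE)/P_1(\L_\oE)$, and you must show that condition (ii) singles out exactly one twist, which again comes down to the intertwining argument above.
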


\begin{proof} Il ne s'agit que d'\'etendre les notations de \cite[Proposition 3.7]{S5},   
qui suppose  $\fa_0(\L^m)\subseteq \fa_0(\L)$, 
au cas g\'en\'eral  $\fb_0(\L^m)\subseteq \fb_0(\L)$. Cette extension est implicite dans \cite{S5}  
puisque  l'auteur d\'efinit $\eta(\L^{\prime\prime}, \L)$ ($\eta_{m,M}$ dans les notations de \cite{S5})
et remarque que cette repr\'esentation   et son  groupe de d\'efinition $P_1(\L^{\prime\prime}_\oE)J^1(\L)$ 
ne d\'ependent que de  $\fb_0(\L^{\prime\prime})$ et non de la suite $\L^{\prime\prime}$ elle-m\^eme. 
Il suffit donc ici de d\'efinir  $\eta(\L^m, \L)   =  \eta(\L^{\prime\prime}, \L)$.  On rappelle que des suites $\L^{\prime\prime}$ v\'erifiant les conditions de l'\'enonc\'e existent toujours  \cite[Lemme 2.8]{S5}. 
 \end{proof}

 \begin{Proposition}[\bf  \cite{S5} Lemma 4.3] \label{bijcan}  
Il y a une bijection canonique  $\mathfrak B_{\L^m \, \L}  $ 
entre l'ensemble des prolongements $\kappa^m$ de $\eta^m$ \`a 
$J^+(\L^m)$ et l'ensemble des prolongements $\kappa$ de $\eta(\L^m, \L)$ \`a 
$P^+(\L^m_\oE)J^1(\L)$. Si $\fa_0(\L^m)\subseteq  \fa_0(\L)$ 
cette bijection associe \`a $\kappa^m$ l'unique repr\'esentation $\kappa$ de $P^+(\L^m_\oE)J^1(\L)$ telle que $\kappa$ et $\kappa^m$ induisent des repr\'esentations (irr\'eductibles) \'equivalentes de 
$P^+(\L^m_\oE)P^1(\L^m)$. 
 \end{Proposition}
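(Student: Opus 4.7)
My plan is to establish the bijection first in the subordinate case $\fa_0(\L^m) \subseteq \fa_0(\L)$, where the induction criterion of the statement provides an explicit characterization, and then transport to the general case via an auxiliary lattice sequence.

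\textbf{Cas subordonn\'e.} Assume $\fa_0(\L^m) \subseteq \fa_0(\L)$, so that $J^1(\L^m) \subseteq P_1(\L^m_\oE) J^1(\L) \subseteq P^1(\L^m)$. Proposition \ref{etamM} applied with $\L^{\prime\prime}=\L^m$ furnishes the equivalence
\[
\Ind_{J^1(\L^m)}^{P^1(\L^m)}\eta^m \;\simeq\; \Ind_{P_1(\L^m_\oE) J^1(\L)}^{P^1(\L^m)}\eta(\L^m,\L),
\]
of irreducible representations. The two normal inclusions $J^1(\L^m)\triangleleft J^+(\L^m)$ and $P_1(\L^m_\oE)J^1(\L)\triangleleft P^+(\L^m_\oE)J^1(\L)$ have a common finite quotient $Q = P^+(\L^m_\oE)/P_1(\L^m_\oE)$. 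Since $\eta^m$ and $\eta(\L^m,\L)$ are $Q$-stable, Clifford theory parametrizes each set of extensions bijectively by the irreducible projective representations of $Q$ for a common $2$-cocycle; identifying these two parametrizations defines the bijection $\mathfrak B_{\L^m,\L}$.

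\textbf{Caract\'erisation par induction.} To match this with the stated criterion, I would induce a given extension $\kappa^m$ (resp.~$\kappa$) further to $P^+(\L^m_\oE) P^1(\L^m)$. A Mackey computation, using the intertwining properties of $\eta^m$ and $\eta(\L^m,\L)$ provided by the theory of semi-simple characters, shows that the resulting representations of $P^+(\L^m_\oE) P^1(\L^m)$ are irreducible and that the two inductions agree if and only if $\kappa^m$ and $\kappa$ correspond to the same projective representation of $Q$ --- which is precisely the content of the statement.

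\textbf{Cas g\'en\'eral.} For arbitrary $\L^m$ with merely $\fb_0(\L^m)\subseteq \fb_0(\L)$, I would invoke \cite[Lemme 2.8]{S5} to choose a sequence $\L^{\prime\prime}$ with $\fb_0(\L^{\prime\prime})=\fb_0(\L^m)$ and $\fa_0(\L^{\prime\prime})\subseteq \fa_0(\L)$. The groups $J^+(\L^m)$ and $J^+(\L^{\prime\prime})$ share the same finite quotient $Q$, the representations $\eta^m$ and $\eta^{\prime\prime}$ correspond by transfer, and $\eta(\L^m,\L)=\eta(\L^{\prime\prime},\L)$ by the proof of Proposition \ref{etamM}. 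The bijection obtained for $\L^{\prime\prime}$ in the subordinate case then transports canonically to $\L^m$. The main obstacle is the matching step in the subordinate case, requiring a careful intertwining computation to ensure that induction to $P^+(\L^m_\oE) P^1(\L^m)$ preserves irreducibility and that the Clifford parametrizations on both sides align through Proposition \ref{etamM}.
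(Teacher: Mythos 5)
Note first that the paper does not prove this statement: it is recalled verbatim from \cite[Lemma~4.3]{S5}, so your attempt can only be measured against the strategy of {\it loc.\ cit.} and against the surrounding material (Proposition \ref{etamM}, Lemmes \ref{projecteur} et \ref{mult}, Proposition \ref{intertwining}). Your overall route --- treat the case $\fa_0(\L^m)\subseteq\fa_0(\L)$ by inducing both sides up to $P^+(\L^m_\oE)P^1(\L^m)$, then reduce the general case to it via an auxiliary sequence $\L''$ --- is indeed the right one. But your first step does not define anything. The prolongements of $\eta^m$ to $J^+(\L^m)$, and those of $\eta(\L^m,\L)$ to $P^+(\L^m_\oE)J^1(\L)$, are not parametrized by irreducible projective representations of $Q=P^+(\L^m_\oE)/P_1(\L^m_\oE)$: each of the two sets is a principal homogeneous space under the character group of $Q$ (two extensions differ by a character --- cf.\ D\'efinition \ref{betaextension}(i) and Lemme \ref{Faits}(ii)), and two torsors under the same group admit no canonical identification. ``Identifying the two parametrizations'' is therefore vacuous, and your second step, which tests whether $\kappa^m$ and $\kappa$ ``correspond to the same projective representation of $Q$'', still refers to that non-existent common parametrization; the argument is circular. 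The bijection must be \emph{defined} by the induction criterion, and the substance of the lemma is that this criterion matches the two torsors. Concretely: by Mackey (a single double coset, since $J^1(\L^m)\subseteq P^1(\L^m)$ and $J^1(\L)\subseteq P_1(\L)\subseteq P^1(\L^m)$) one gets
$$
\Res_{P^1(\L^m)}\,\Ind_{J^+(\L^m)}^{P^+(\L^m_\oE)P^1(\L^m)}\kappa^m \simeq \Ind_{J^1(\L^m)}^{P^1(\L^m)}\eta^m, \qquad
\Res_{P^1(\L^m)}\,\Ind_{P^+(\L^m_\oE)J^1(\L)}^{P^+(\L^m_\oE)P^1(\L^m)}\kappa \simeq \Ind_{P_1(\L^m_\oE)J^1(\L)}^{P^1(\L^m)}\eta(\L^m,\L),
$$
and these two restrictions are one and the same irreducible representation by Proposition \ref{etamM}. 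Hence both induction maps land in the torsor (under the characters of $Q$) of extensions of that irreducible representation to $P^+(\L^m_\oE)P^1(\L^m)$; both maps are equivariant for twisting by characters of $Q$, hence bijective, and $\mathfrak B_{\L^m\,\L}$ is their composite. It is this irreducibility and equivariance that you must actually establish --- the multiplicity-one and non-vanishing statements of Lemmes \ref{projecteur} and \ref{mult} are exactly the technical inputs --- rather than an abstract Clifford-theoretic matching.

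The reduction to the subordinate case also has a gap. The groups $J^+(\L^m)=P^+(\L^m_\oE)J^1(\L^m)$ and $J^+(\L'')=P^+(\L''_\oE)J^1(\L'')$ are genuinely different (only their $P^+$ parts coincide), so the fact that $\eta^m$ and $\eta''$ correspond by transfer does not, by itself, transport extensions from one group to the other. What is needed is a further canonical bijection between these two sets of extensions; this is again an instance of the same construction, applied to the pair $(\L'',\L^m)$ after arranging $\fa_0(\L'')\subseteq\fa_0(\L^m)$ and observing that $\eta(\L'',\L^m)=\eta^m$ because $P_1(\L''_\oE)\subseteq J^1(\L^m)$. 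One must then check that the resulting composite is independent of the choice of $\L''$; this is the chaining technique via \cite[Lemme~2.10]{S5} that the paper invokes, for the analogous reduction, in the proof of Proposition \ref{diagcom1}. As written, your transport step asserts the conclusion without supplying this intermediate bijection or the independence check.
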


On notera parfois simplement $\mathfrak B$ la bijection canonique d\'efinie ci-dessus. C'est cette bijection qui permet de d\'efinir la notion de $\beta$-extension dans \cite{S5} : 

\begin{Definition}[\bf  \cite{S5} Theorem 4.1, Definition 4.5]\label{betaextension}

\begin{enumerate}
	\item {\em Cas maximal.} Une repr\'esen\-ta\-tion $ \ \kappa^M \  $ de $J^+(\L^M)$ est une {\em $\beta$-extension} de
	$\eta^M$ si $\kappa^M$ est un prolongement de 
	 $\eta(\L^m, \L^M)$, pour une  suite autoduale $\L^m$ de $\oE$-r\'eseaux telle que 
	 $\fb_0(\L^m)$ soit un $\oE$-ordre autodual minimal contenu dans $\fa_0(\L^M)$.  La condition est alors v\'erifi\'ee pour toute telle suite $\L^m$.  Il existe des $\beta$-extensions et deux d'entre elles diff\`erent d'un caract\`ere de $P^+(\L_\oE^M)/ P_1(\L_\oE^M)$ trivial sur   les sous-groupes unipotents. 
	 \item {\em Cas g\'en\'eral.} Soit $\kappa^M$    une   $\beta$-extension  de
	$\eta^M$ \`a $J^+(\L^M)$. La {\em $\beta$-extension  de
	$\eta $ \`a $J^+(\L )$ relative \`a $\L^M$ et compatible \`a $\kappa^M$} est la repr\'esentation $\kappa$ de $J^+(\L)$ telle  que 
	$\mathfrak B_{\L  \, \L^M} (\kappa)$ soit la restriction de $ \kappa^M$ \`a $P^+(\L_\oE)J^1(\L^M) $. 
	
	On dira que  $\kappa$ est une $\beta$-extension  de
	$\eta $ \`a $J^+(\L )$ si elle en est une relativement \`a  une suite $\L^M$ convenable. 
\end{enumerate}
\end{Definition}

 Les faits suivants sont des cons\'equences imm\'ediates, voire tautologiques, de \cite[\S4]{S5}. 
 \begin{Lemma}\label{Faits}
\begin{enumerate}
	\item Si $\kappa$ correspond \`a $\kappa'$ par la bijection canonique 
	$\mathfrak B_{\L \, \L'}  $, alors 
	$\kappa$ est une $\b$-extension de $\eta$ relative \`a $\L^M$ 
	(compatible \`a $\kappa^M$) si et seulement si 
		$\kappa'$ est une $\b$-extension de $\eta'$ relative \`a $\L^M$ 
		(compatible \`a $\kappa^M$).  
\item La bijection canonique $\mathfrak B_{\L^m \, \L}  $,   
  entre prolongements de $\eta^m$ \`a $J^+(\L^m)$ 
	et prolongements de  $\eta(\L^m, \L)$ \`a $P^+(\L^m_{\oE})  J^1(\L)$,  est compatible \`a la torsion par les caract\`eres du groupe $P^+( \L^m_{\oE})/ P^1( \L^m_{\oE})$. 
\end{enumerate}
\end{Lemma} 

\begin{proof}
Le premier fait provient de ce que la compos\'ee de deux bijections cano\-niques est une bijection canonique (voir aussi la d\'emonstration 
de \cite[Lemma 4.3]{S5}).  Le second 
 est \'el\'ementaire. 
\end{proof}

\subsection{Une propri\'et\'e caract\'eristique de la bijection canonique}\label{propbij}

On d\'emontre dans ce paragraphe la propri\'et\'e suivante : 

 \begin{Proposition} \label{intertwining}  
 Supposons que $\fa_0(\L^m)\subseteq  \fa_0(\L)$.
 Soit $\kappa^m$ un prolongement   de $\eta^m$ \`a $J^+(\L^m)$ 
 et $\kappa $ un prolongement   de $\eta(\L^m, \L) $ \`a $P^+(\L^m_{\oE})  J^1(\L)$. 
 Alors 
$$ \kappa =  \mathfrak B_{\L^m \, \L}    (\kappa^m) \quad \iff \quad 
\Hom_{J^+(\L^m) \cap P^+(\L^m_{\oE})  J^1(\L)}(\kappa^m, \kappa) \ne \{ 0 \} $$ 
 \end{Proposition}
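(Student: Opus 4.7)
Posons $A = J^+(\L^m) = P^+(\L^m_{\oE}) J^1(\L^m)$, $B = P^+(\L^m_{\oE}) J^1(\L)$, $K = P^+(\L^m_{\oE}) P^1(\L^m)$ et $I = A \cap B$. L'hypothèse $\fa_0(\L^m) \subseteq \fa_0(\L)$ permet de considérer $A$ et $B$ comme des sous-groupes ouverts compacts de $K$, de sorte que les inductions $\Ind_A^K \kappa^m$ et $\Ind_B^K \kappa$ sont bien définies.

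Pour le sens $(\Leftarrow)$, on partira de l'hypothèse $\Hom_I(\kappa^m, \kappa) \neq \{0\}$ et on appliquera le fait \'el\'ementaire (\ref{obvious}) pour obtenir $\Hom_K(\Ind_A^K \kappa^m, \Ind_B^K \kappa) \neq \{0\}$. D'apr\`es la proposition \ref{bijcan}, $\Ind_A^K \kappa^m$ est irr\'eductible, et les deux induites ont m\^eme dimension (puisque $\dim \kappa^m = \dim \eta^m$, $\dim \kappa = \dim \eta(\L^m, \L)$, et l'\'egalit\'e $[K:A] \dim \eta^m = [K:B] \dim \eta(\L^m, \L)$ d\'ecoule de l'isomorphisme des induites appari\'ees fourni par la proposition \ref{bijcan}). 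On en d\'eduira que $\Ind_A^K \kappa^m$ se plonge comme sous-repr\'esentation de dimension maximale dans $\Ind_B^K \kappa$, d'o\`u $\Ind_A^K \kappa^m \simeq \Ind_B^K \kappa$, puis $\kappa = \mathfrak{B}_{\L^m \, \L}(\kappa^m)$ par l'unicit\'e dans la proposition \ref{bijcan}.

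Pour le sens $(\Rightarrow)$, on suppose $\kappa = \mathfrak{B}_{\L^m \, \L}(\kappa^m)$. La proposition \ref{bijcan} fournit alors un isomorphisme $\Ind_A^K \kappa^m \simeq \Ind_B^K \kappa$ entre repr\'esentations irr\'eductibles, donc $\Hom_K(\Ind_A^K \kappa^m, \Ind_B^K \kappa) \neq 0$. La r\'eciprocit\'e de Frobenius combin\'ee \`a la d\'ecomposition de Mackey donne
$$\Hom_K(\Ind_A^K \kappa^m, \Ind_B^K \kappa) \simeq \bigoplus_{g \in A \backslash K / B} \Hom_{A \cap gBg^{-1}}(\kappa^m, g \cdot \kappa) .$$
La strat\'egie sera d'\'etablir l'\'egalit\'e $K = AB$, ramenant la somme \`a son unique terme trivial $g=1$, \`a savoir $\Hom_I(\kappa^m, \kappa)$, qui sera donc non nul.

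L'obstacle principal est pr\'ecis\'ement la d\'ecomposition $K = AB$, \'equivalente \`a l'inclusion $P^1(\L^m) \subseteq J^1(\L^m) \cdot (J^1(\L) \cap P^1(\L^m))$. Il s'agit d'une d\'ecomposition de type Iwahori du pro-$p$ radical $P^1(\L^m)$, \`a \'etablir en exploitant les filtrations embo\^it\'ees $\fa_r(\L^m) \subseteq \fa_r(\L)$ et la structure fine des sous-groupes $J^1$ d\'evelopp\'ee par Bushnell-Kutzko et Stevens. Une fois cette d\'ecomposition obtenue, la conclusion du sens $(\Rightarrow)$ suit imm\'ediatement de la r\'eduction de la somme de Mackey \`a un unique terme.
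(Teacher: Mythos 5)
Your direction $(\Leftarrow)$ is essentially the paper's: both induced representations are irreducible, so (\ref{obvious}) already gives the isomorphism and the dimension count is superfluous. The gap is in $(\Rightarrow)$: the decomposition $K=AB$ on which your whole strategy rests is false in general. With $A=P^+(\L^m_\oE)J^1(\L^m)$ and $B=P^+(\L^m_\oE)J^1(\L)$ one has $AB=P^+(\L^m_\oE)\,J^1(\L^m)J^1(\L)$, so $K=AB$ would require $P_1(\L^m)\subseteq J^1(\L^m)\,\bigl(J^1(\L)\cap P_1(\L^m)\bigr)$. Already in the extreme case $\L=\L^m$ this forces $P_1(\L^m)=J^1(\L^m)$, which fails for every stratum of positive level: $J^1(\b,\L^m)$ is built from the lattice $\fb_1(\L^m)+\fJ^1$ and is a proper subgroup of $P_1(\L^m)=1+\fa_1(\L^m)$ as soon as $\b\ne 0$. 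No Iwahori-type argument on the filtrations $\fa_r(\L^m)\subseteq\fa_r(\L)$ can repair this, because $J^1(\L^m)$ and $J^1(\L)$ are both ``small'' in the same directions transverse to $B=\End_F(V)^{\b}$. Consequently the Mackey sum genuinely has many double cosets, and its nonvanishing only tells you that \emph{some} term $\Hom_{A\cap gBg^{-1}}(\kappa^m,{}^g\kappa)$ is nonzero, not the term $g=1$.

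Isolating the $g=1$ term is exactly the content of the paper's proof, and it is done representation-theoretically rather than group-theoretically. Lemma \ref{mult} shows that $\eta$ occurs with multiplicity one in $\Ind_B^K\kappa$, so the canonical copy of $\kappa$ inside its induced representation is its $\theta$-isotypic component, which the isomorphism $\mathcal E$ must carry into the $\theta$-isotypic component of $\Ind_A^K\kappa^m$. Lemma \ref{projecteur} then shows that the canonical projection onto $\kappa^m$ does not vanish on that component: the transfer relation between $\theta$ and $\theta^m$ makes the averaging operator $\oint_{H^1(\L)\cap J^+(\L^m)}\theta(x^{-1})\kappa^m(x)\,dx$ nonzero. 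These two lemmas are the substantive input your argument is missing, and without them (or a substitute) the implication $(\Rightarrow)$ is not established.
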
 
 
\begin{proof}
Elle s'appuie sur deux lemmes qu'on \'etablit d'abord. 

\begin{Lemma}\label{projecteur}
Il existe un vecteur $v$ de l'espace de $\kappa^m$ tel que 
$$ \oint_{H^1(\L) \cap J^+(\L^m)} \theta(x^{-1}) \ \kappa^m(x) \ v \, dx  \ne 0 . 
$$
 \end{Lemma} 

\begin{proof}
Comme $H^1(\L)\subseteq P_1(\L)\subseteq P_1(\L^m)$, l'int\'egrale calcul\'ee est en fait 
$$ X(v) = \oint_{H^1(\L) \cap J^1(\L^m)} \theta(x^{-1}) \ \eta^m(x) \  v  \, dx    . 
$$
L'application $v \mapsto X(v) $ est un projecteur de l'espace de $\eta^m$ sur son sous-espace isotypique de type $\theta$ sous 
$H^1(\L) \cap J^1(\L^m)$. Il s'agit donc de montrer que celui-ci est non nul, et comme 
l'induite de $\theta^m$ \`a $J^1(\L^m)$ est multiple de $\eta^m$ il suffit d'\'etablir 
$$\Hom_{H^1(\L) \cap J^1(\L^m)} (\theta, \Ind_{H^1(\L^m)}^{J^1(\L^m)}\theta^m) \ne \{ 0\}. 
$$
On d\'eveloppe la restriction de $\Ind_{H^1(\L^m)}^{J^1(\L^m)}\theta^m $  \`a $H^1(\L)\cap J^1(\L^m)$ selon la formule de Mackey ; le premier terme est 
$\Ind_{H^1(\L)\cap H^1(\L^m)}^{H^1(\L)\cap J^1(\L^m))}\theta^m $, or par r\'eciprocit\'e de Frobenius on a bien 
$$\Hom_{H^1(\L) \cap J^1(\L^m)} (\theta, \Ind_{H^1(\L)\cap H^1(\L^m)}^{H^1(\L)\cap J^1(\L^m))}\theta^m) 
\simeq  \Hom_{H^1(\L) \cap H^1(\L^m)} (\theta,  \theta^m)   \ne \{ 0\}
$$ puisque $\theta^m$ est le transfert de $\theta$   \cite[Proposition 3.2]{S5}. 
\end{proof}

\begin{Lemma}\label{mult} 
La multiplicit\'e de $\eta$ dans $ \  \Ind_{P^+(\L^m_{\oE})  J^1(\L)}^{P^+(\L^m_{\oE}) P_1(\L^m)} \ \kappa \  $ est $1$. 
\end{Lemma}

\begin{proof}
De nouveau on a $J^1(\L)\subseteq P_1(\L)\subseteq P_1(\L^m)$, il suffit donc de consid\'erer la res\-tric\-tion de la repr\'esentation induite 
\`a $P_1(\L^m)$, soit $\Phi = \Ind_{P_1(\L^m_{\oE})  J^1(\L)}^{  P_1(\L^m)} \  \kappa_{|P_1(\L_{\oE}^m) J^1(\L )} $, qui est irr\'eductible  \cite[Proposition 3.7]{S5}.
On a 
$$\Hom_{J^1(\L)}(\eta, \Phi) \simeq \Hom_{  P_1(\L^m)}( \Ind_{ P_1(\L_{\oE}^m) J^1(\L )}^{  P_1(\L^m)}\Ind_{  J^1(\L)}^{  P_1(\L_{\oE}^m) J^1(\L )} \eta, \  \Phi) 
$$
Remarquons que  $\Ind_{  J^1(\L)}^{  P_1(\L_{\oE}^m) J^1(\L )} \eta \simeq \eta(\L^m, \L) \otimes \Ind_{  P^1(\L_{\oE})}^{  P_1(\L_{\oE}^m)  } 1 
\simeq \oplus_{\chi \in Z}   m_\chi \,  \eta(\L^m, \L) \otimes  
   \chi  $, o\`u $Z$ d\'esigne l'ensemble des classes d'isomorphisme 
de facteurs irr\'eductibles de $ \,  \Ind_{  P^1(\L_{\oE})}^{  P_1(\L_{\oE}^m)  } \, 1  \,  $
et $m_\chi$ la multiplicit\'e de $\chi \in Z$ dans cette induite. 

Pour  $\chi \in Z$,  
$I(\chi )= \Ind_{ P_1(\L_{\oE}^m) J^1(\L )}^{  P_1(\L^m)} 
 \eta(\L^m, \L) \otimes  \chi$ est irr\'eductible  car l'entrelacement dans $ P_1(\L^m) $ de $\eta(\L^m, \L) \otimes  \chi $  est contenu dans celui de $\eta$ : 
 $J^1(\L) G_E J^1(\L) \cap P_1(\L^m)= P_1(\L_{\oE}^m) J^1(\L )$. Ainsi $I(\chi)$ et $\Phi$ sont entrelac\'ees si et seulement si elles sont isomorphes, si et seulement si $\chi=1$ par d\'efinition de $\eta(\L^m, \L)$. Par ailleurs $m_1$ vaut $1$ par Frobenius. 
\end{proof}

Montrons maintenant   la proposition. Par d\'efinition, les repr\'esentations $\kappa$ et $\kappa^m$ se corres\-pondent par la bijection canonique si et seulement si il existe un isomorphisme $\mathcal E$ entre les repr\'esentations induites :  
$$
\Ind_{P^+(\L_{\oE}^m) J^1(\L )}^{P^+(\L_{\oE}^m) P_1(\L^m)} \kappa 
\quad \stackrel{\mathcal E}{\longrightarrow} \quad  \Ind_{J^+(\L^m)}^{P^+(\L_{\oE}^m) P_1(\L^m)} \kappa^m 
.
$$

La condition $\Hom_{J^+(\L^m) \cap P^+(\L^m_{\oE})  J^1(\L)}(\kappa^m, \kappa) \ne \{ 0 \}$ est suffisante par \ref{obvious} (les induites sont irr\'eductibles). Pour montrer qu'elle est n\'ecessaire   nous avons besoin des lemmes ci-dessus. 
On compose $\mathcal E$ avec l'injection canonique $\mathcal J$ de $\kappa$ dans son induite \`a droite, avec la projection canonique $\mathcal P$ 
de l'induite de $\kappa^m$ sur $\kappa^m$ \`a gauche : on obtient un entrelacement $\mathcal P \circ \mathcal E \circ \mathcal J$ de 
$\kappa$ dans $\kappa^m$ dont on va montrer qu'il est non nul. 

D'apr\`es le lemme \ref{mult}, l'image de $\kappa$ par $\mathcal J$ est la composante isotypique de type $\theta$ de l'induite de $\kappa$, qui s'envoie injectivement par $\mathcal E$ sur la composante isotypique de type $\theta$ de l'induite de $\kappa^m$. Il suffit alors de montrer 
que $\mathcal P$ n'est pas identiquement nulle sur cette composante. 

L'espace de l'induite de $\kappa^m$ peut se voir comme l'espace des fonctions $f$ de $P^+(\L_{\oE}^m) P_1(\L^m)$ dans l'espace de $\kappa^m$ v\'erifiant $f(gx) = \kappa^m(g) f(x)$ pour $ g \in J^+(\L^m)$ et on a $\mathcal P (f) = f(1)$. L'application 
$f \mapsto  f^\theta $ d\'efinie par $f^\theta(y) = \oint_{H^1(\L)} \theta (x^{-1}) f(yx) dx $ 
est un projecteur sur la composante de type $\theta$ sous $H^1(\L)$. Prenons $f$ dans l'image canonique de $\kappa^m$, c'est-\`a-dire de support 
$J^+(\L^m)$. On a 
$$
\mathcal P (f^\theta) = f^\theta (1) = \oint_{H^1(\L)} \theta (x^{-1}) f(x) dx = 
\oint_{H^1(\L)\cap J^+(\L^m)} \theta (x^{-1}) \kappa^m(x) f(1) dx.
$$ 
L'existence de $f(1)$ tel que cette int\'egrale soit non nulle est donn\'ee par le lemme 
\ref{projecteur}.
\end{proof}

\begin{Corollary} \label{intertwiningeta}  
 Si $\fa_0(\L^m)\subseteq  \fa_0(\L)$, alors 
 $\eta(\L^m, \L)$ est l'unique   prolongement   de $\eta $ \`a $P_1(\L^m_{\oE})  J^1(\L)$ v\'erifiant  
$$ 
\Hom_{J_1(\L^m) \cap P_1(\L^m_{\oE})  J^1(\L)}(\eta^m, \eta(\L^m, \L)) \ne \{ 0 \} .$$ 
 \end{Corollary}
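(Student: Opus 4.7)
The plan is to derive the corollary from Proposition \ref{intertwining} combined with the uniqueness clause of Proposition \ref{etamM}. The argument splits naturally into existence and uniqueness of an extension of $\eta$ satisfying the Hom condition.

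For existence, I would pick any extension $\kappa^m$ of $\eta^m$ to $J^+(\L^m)$ (such extensions exist by Proposition \ref{bijcan}) and set $\kappa=\mathfrak B_{\L^m\,\L}(\kappa^m)$. Proposition \ref{intertwining} then furnishes a nonzero element of $\Hom_{J^+(\L^m)\cap P^+(\L^m_{\oE})J^1(\L)}(\kappa^m,\kappa)$. Viewing this same intertwiner on the smaller subgroup $J^1(\L^m)\cap P_1(\L^m_{\oE})J^1(\L)$, on which $\kappa^m$ and $\kappa$ restrict respectively to $\eta^m$ and $\eta(\L^m,\L)$, it yields the required nonvanishing of $\Hom_{J^1(\L^m)\cap P_1(\L^m_{\oE})J^1(\L)}(\eta^m,\eta(\L^m,\L))$.

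For uniqueness, let $\eta'$ be an extension of $\eta$ to $P_1(\L^m_{\oE})J^1(\L)$ satisfying the Hom condition. Applying \ref{obvious} yields
\[
\Hom_{P_1(\L^m)}\bigl(\Ind_{J^1(\L^m)}^{P_1(\L^m)}\eta^m,\; \Ind_{P_1(\L^m_{\oE})J^1(\L)}^{P_1(\L^m)}\eta'\bigr)\ne\{0\}.
\]
Proposition \ref{etamM}(ii) applied with $\L''=\L^m$ --- admissible thanks to the hypothesis $\fa_0(\L^m)\subseteq\fa_0(\L)$ --- identifies the first induction with the irreducible representation $\Ind_{P_1(\L^m_{\oE})J^1(\L)}^{P_1(\L^m)}\eta(\L^m,\L)$. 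The second induction is irreducible as well: since $\eta'$ restricts to $\eta$ on $J^1(\L)$, its entrelacement in $P_1(\L^m)$ is contained in that of $\eta$, which by the identity $J^1(\L)G_E J^1(\L)\cap P_1(\L^m)=P_1(\L^m_{\oE})J^1(\L)$ used in the proof of Lemma \ref{mult} coincides with the inducing subgroup. Equal dimensions together with nonzero Hom between irreducibles yield an isomorphism $\Ind\eta'\simeq\Ind\eta(\L^m,\L)$, so $\eta'$ is an irreducible extension of $\eta$ satisfying conditions (i) and (ii) of Proposition \ref{etamM} for $\L''=\L^m$. The uniqueness clause of that proposition then forces $\eta'=\eta(\L^m,\L)$.

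The delicate point will be justifying that the uniqueness in Proposition \ref{etamM} is effective already from the validity of (i) together with (ii) for a single admissible $\L''$; this should be traceable from the construction in \cite{S5}, where $\eta(\L^m,\L)$ is built from one choice of $\L''$ and then shown to be independent of that choice.
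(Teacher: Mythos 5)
Your proposal is correct and is essentially the argument the paper leaves implicit (the corollary is stated without proof, the point being that the proof of Proposition \ref{intertwining} runs verbatim at the level of $\eta$, with the characterisation of Proposition \ref{etamM}(ii) for $\L''=\L^m$ playing the role of the canonical bijection). The one point you flag — whether uniqueness in Proposition \ref{etamM} can be invoked knowing (ii) only for the single choice $\L''=\L^m$ — is a genuine issue but is already settled by the mechanism of Lemma \ref{mult}: for any extension $\eta'$ of $\eta$ to $P_1(\L^m_\oE)J^1(\L)$, the computation of $\Hom_{J^1(\L)}\bigl(\eta,\Ind_{P_1(\L^m_\oE)J^1(\L)}^{P_1(\L^m)}\eta'\bigr)$ via Mackey and the identity $J^1(\L)G_EJ^1(\L)\cap P_1(\L^m)=P_1(\L^m_\oE)J^1(\L)$ shows the $\eta$-isotypic component of the induced representation is exactly the canonical copy of $\eta'$. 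Hence if two extensions $\eta_1,\eta_2$ of $\eta$ have isomorphic (irreducible) inductions to $P_1(\L^m)$, each embeds in the $\eta$-isotypic component of the other's induction and therefore $\eta_1\simeq\eta_2$; no appeal to the full family of admissible $\L''$ is needed. With that observation inserted, your existence and uniqueness arguments are both complete.
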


\subsection{D\'ecompositions  subordonn\'ees et restriction de Jacquet}\label{subord} 

Rappelons les d\'efinitions de \cite[\S 5]{S5}. Soit  $V = \oplus_{j=-k}^k W^{(j)}$ 
  une d\'ecomposition 
  de $V$   autoduale   -- c'est-\`a-dire que l'orthogonal dans $V$  de $W^{(j)}$, pour $-k \le j \le k$, est $ \ \oplus_{s \ne -j} \;  W^{(s)}$ --   telle que, pour tout $j$,  $-k\le j \le k$  :  
\begin{itemize}
	\item $W^{(j)} = \oplus_{i=1}^l W^{(j)}\cap V^i$ ;
	\item   
	$W^{(j)}\cap V^i$ est  un 
	$E_{i}$-sous-espace de $ V^{i}$ pour tout $i$, $1\le i \le l$.  
\end{itemize}

Une telle   d\'ecomposition est  {\it subordonn\'ee} \`a 
la strate $[\L,n,0,\b]$ si on a   pour tout entier $r$~:  
$\L(r) = \oplus_{j=-k}^k \L(r) \cap W^{(j)}$;  on notera 
$\L^{(j)}$ la suite de r\'eseaux de $W^{(j)}$ d\'efinie par 
$\L^{(j)} (r) = \L(r) \cap W^{(j)}$. 
Elle est  {\it proprement subordonn\'ee} \`a 
  $[\L,n,0,\b]$ si   de plus, pour tout $i$, $1\le i \le l$,  chaque saut 
  de la suite de r\'eseaux  $\L^i(r)  $ intervient dans un et un seul des sous-espaces  $W^{(j)}\cap V^i$
  (voir \cite[\S 7]{BK}  et \cite[\S 5]{S5}).

Fixons  une  d\'ecomposition  autoduale $V = \oplus_{j=-k}^k W^{(j)}$   de $V$   proprement subordonn\'ee  \`a 
  $[\L^m,n,0,\b]$~; elle    
  est alors   subordonn\'ee \`a $[\L,n,0,\b]$. 
Soit  $M $  le sous-groupe de Levi de $G^+$ fixant la d\'ecomposition 
$V = \oplus_{j=-k}^k W^{(j)}$ et $P$  un sous-groupe parabolique de $G^+$ de facteur de Levi $M$ et radical unipotent $U$ ; on note $P^{-}$ et $U^{-}$ les oppos\'es de $P$ et $U$ par rapport \`a $M$. Il est montr\'e dans  
  \cite[\S 5.3]{S5}  que sous l'hypoth\`ese de subordination,  les sous-groupes 
  $H^1(\L)$ et $J^1(\L)$ ont une d\'ecomposition d'Iwahori par rapport \`a $(P,M)$ et  l'on peut d\'efinir :

\begin{itemize}
	\item un prolongement $\theta_P$ de $\theta$ \`a 
	$H^1_P(\L)= H^1(\L) (J^1(\L)\cap U)$, trivial sur $J^1(\L)\cap U$ ; 
	\item l'unique repr\'esentation irr\'eductible $\eta_P$ de $J^1_P(\L)= H^1(\L) (J^1(\L)\cap P)$ contenant   $\theta_P$ ; sa restriction \`a 	$H^1_P(\L)$ est   multiple de $\theta_P$. Elle    v\'erifie en outre : 
	
	$\eta_P$ est la repr\'esentation de $J^1_P(\L)$ dans les $J^1(\L)\cap U$-invariants de $\eta$ obtenue par restriction de $\eta$ \`a  $J^1_P(\L)$ et $\eta \simeq \Ind_{J^1_P(\L)}^{J^1(\L)}  \ \eta_P$.  
\end{itemize}

Dans les notations de  \cite[\S 5.3]{S5}   on a 
$H^1(\L) \cap M = H^1(\L^{(0)})\times\prod_{j=1}^k \widetilde H^1(\L^{(j)})$ ; 
le caract\`ere semi-simple $\theta$ est trivial sur $H^1(\L) \cap U$ et
$H^1(\L) \cap U^{-} $ et a pour restriction \`a $H^1(\L) \cap M$ un produit 
$\theta^{(0)} \otimes \bigotimes_{j=1}^k  ( \tilde\theta^{(j)} )^2$ o\`u $\theta^{(0)}$ 
est un caract\`ere semi-simple gauche et les $ \tilde\theta^{(j)}$, pour $j\ne 0$, sont des caract\`eres 
semi-simples. De m\^eme 
${\eta_P}_{| J^1(\L) \cap M}$ est une 
repr\'esentation de 
$J^1(\L) \cap M = J^1(\L^{(0)})\times\prod_{j=1}^k \widetilde J^1(\L^{(j)})$  
de la forme 
$\eta^{(0)} \otimes \bigotimes_{j=1}^k \tilde\eta^{(j)}$, o\`u  
  $\eta^{(0)}$ est l'unique repr\'esentation irr\'eductible de $J^1(\L^{(0)})$ 
	contenant $\theta^{(0)}$ 
et $\tilde\eta^{(j)}$, pour $j\ne 0$, est l'unique repr\'esentation irr\'eductible de $\widetilde J^1(\L^{(j)})$ 
	contenant $ (\tilde\theta^{(j)})^2$.

Sous l'hypoth\`ese de subordination {\it propre}, ici valide pour $\L^m$, on a de tels r\'esultats jusqu'au niveau de $J^+$   \cite[\S 5.3]{S5}  : le groupe $J^+(\L^m)$ a aussi une d\'ecomposition d'Iwahori et tout prolongement $\kappa^m $  de $\eta^m$ \`a  $J^+(\L^m)$ est induit de la repr\'esentation $\kappa^m_P$ de $J^+_P(\L^m)=  H^1(\L^m) (J^+(\L^m)\cap P)$ obtenue en prenant les $J^1(\L^m)\cap U$-invariants de $\kappa^m$ ; la repr\'esentation  $\kappa^m_P$  prolonge $\eta^m_P$. Noter qu'alors  $J^+(\L^m)\cap U = J^1(\L^m)\cap U$. 
  
En l'absence  
de subordination  {\it propre}, on peut n\'eanmoins utiliser les techniques de $J^1\cap U$-invariants 
pour \'etudier des prolongements de $\eta$ \`a un sous-groupe convenable de $J^+(\L)$. Gardons les 
notations  $J_P^+(\L)=  H^1(\L) (J^+(\L)\cap P)$, $\kappa_P$ prolongement de 
$\eta_P$ \`a   $J_P^+(\L)$, dans ce contexte \'elargi ; on v\'erifie facilement (crit\`ere de Mackey) que 
$\Ind_{J_P^+(\L)}^{J^+(\L)}\kappa_P$ n'est pas irr\'eductible si $J^+(\L)$ n'est pas \'egal \`a 
$J^1(\L)J_P^+(\L)$.  Mais ce sous-groupe lui-m\^eme 
est digne d'int\'er\^et. Il a une d\'ecomposition d'Iwahori relative \`a $(M,  P)$ et :  
\begin{Lemma}\label{invariantsbis} 
 Pour tout prolongement $\kappa $ de  $\eta $ \`a $$J^1(\L)J_P^+(\L)= (J^1(\L)\cap U^-) (J^+(\L)\cap M) 
 (J^+(\L) \cap U) $$ 
 le sous-espace des invariants par  $J^1(\L ) \cap U $ d\'efinit une repr\'esentation 
 $\kappa_{P }$ de  $J^+_{P }(\L )= (P^+ (\L _\oE) \cap P ) J_P^1(\L )$ prolongeant $\eta_{P }$  
 et telle que  $$\kappa \simeq \Ind_{J^+_{P}(\L )}^{J^1(\L)J_P^+(\L)}\kappa_{P}.$$ 
 \end{Lemma} 
 
\begin{proof} Il suffit de v\'erifier   l'\'egalit\'e suivante : 
\begin{equation}\label{JP}
J^+(\L) \cap P = (P^+ (\L _\oE) \cap P ) \  (J^1(\L )\cap P).
\end{equation}
Comme $J^1(\L )$ a une d\'ecomposition d'Iwahori par rapport \`a $(M,P)$, tout \'el\'ement  de $J^+(\L)$ peut s'\'ecrire 
$j = x j_-^{-1} j_P$ avec $x \in P^+ (\L _\oE)$, 
$j_P \in J^1(\L )\cap P $ et $j_- \in J^1(\L )\cap U^-$. Alors $j$ appartient \`a $P$ si et seulement si $ x j_-^{-1}$ 
appartient \`a $P$. Ecrivons dans ce cas $x= umj_-$ avec $u \in U$, 
$m \in M$. Par unicit\'e de la d\'ecomposition d'Iwahori 
on voit que, puisque $x$ commute \`a $F[\beta]^\times$ qui est contenu dans $M$, alors $j_-$ commute aussi \`a $F[\beta]^\times$. Ainsi  $j_-$ appartient \`a $P_1 (\L _\oE) \cap U^-$ et
$ x j_-^{-1}   $ appartient \`a $P^+ (\L _\oE) \cap P$, c.q.f.d.

Ce point  acquis, la d\'emonstration 
 reprend sans difficult\'e celle de \cite[\S 5.3]{S5}  ou de \cite[\S 7.2]{BK}. 
Noter que dans cette situation d'une d\'ecomposition subordonn\'ee \`a  $[\L,n,0,\b]$ sans lui \^etre proprement subordonn\'ee, on n'a aucune raison d'avoir \'egalit\'e entre $J^+(\L ) \cap U $ et $J^1(\L ) \cap U $. 
\end{proof}

 \begin{Corollary}\label{invariants} Pla\c cons-nous dans les hypoth\`eses suivantes : 
 {\em 
 \begin{equation}\label{hypotheses}
\left\{ \aligned 
 &V = \oplus_{j=-k}^k W^{(j)}    \text{ est proprement subordonn\'ee  \`a } 
  [\L^m,n,0,\b] ;  
  \\ 
  &P^+ (\L_\oE^m) = (P^+ (\L _\oE) \cap P ) \ P_1(\L_\oE) . 
  \endaligned \right.
 \end{equation}
 }
 Alors   le sous-groupe $J^+_{\L^m \L} = P^+(\L_\oE^m) J^1(\L )$   co\"\i ncide 
 avec $ \ (J^+ (\L ) \cap P ) \   J^1(\L ) \  $
   et poss\`ede une d\'ecomposition d'Iwahori par rapport \`a $(P , M)$. 
   
 Pour tout prolongement $\kappa $ de  $\eta $ \`a $P^+(\L_{\oE}^m) J^1(\L )$ 
 le sous-espace des invariants par   $J^1(\L ) \cap U$ 
 d\'efinit une repr\'esentation 
 $\kappa_{P }$ de  $J^+_{P }(\L )= (P^+ (\L _\oE) \cap P ) J_P^1(\L )$ prolongeant $\eta_{P }$  
 et telle que  $$\kappa \simeq \Ind_{J^+_{P}(\L )}^{P^+(\L_{\oE}^m) J^1(\L )}\kappa_{P}.$$ 
 \end{Corollary}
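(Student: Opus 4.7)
The plan is to show that, under the hypotheses (\ref{hypotheses}), the subgroup $P^+(\L_\oE^m) J^1(\L )$ coincides with the subgroup $J^1(\L) J_P^+(\L)$ treated in Lemma \ref{invariantsbis}, so that the present corollary becomes a direct transcription of that lemma.

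For the identification, I use the second clause of (\ref{hypotheses}) together with $P_1(\L_\oE) \subseteq J^1(\L)$ to obtain
$$P^+(\L_\oE^m) J^1(\L ) \;=\; (P^+(\L_\oE) \cap P)\, P_1(\L_\oE)\, J^1(\L ) \;=\; (P^+(\L_\oE) \cap P)\, J^1(\L ).$$
Since the decomposition is properly subordinate to $[\L^m,n,0,\b]$, it is in particular subordinate to $[\L,n,0,\b]$, so equation (\ref{JP}) from the proof of Lemma \ref{invariantsbis} applies and gives
$$(J^+(\L) \cap P)\, J^1(\L) \;=\; (P^+(\L_\oE) \cap P)(J^1(\L) \cap P)\, J^1(\L) \;=\; (P^+(\L_\oE) \cap P)\, J^1(\L),$$
which establishes the first asserted equality of the corollary. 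Moreover, since $J_P^+(\L) = H^1(\L)(J^+(\L) \cap P)$ with $H^1(\L) \subseteq J^1(\L)$, and since $P^+(\L_\oE)$ normalises $J^1(\L)$, the same manipulation yields $J^1(\L) J_P^+(\L) = (P^+(\L_\oE) \cap P)\, J^1(\L)$, so that all three subgroups coincide.

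Once this identification is acquired, both remaining assertions follow immediately from Lemma \ref{invariantsbis} applied to the common group: the Iwahori decomposition w.r.t.\ $(P,M)$ is the one given there, and for any extension $\kappa$ of $\eta$ to $P^+(\L_\oE^m) J^1(\L)$ the $(J^1(\L) \cap U)$-invariants form a $J_P^+(\L)$-stable subspace whose action defines an extension $\kappa_P$ of $\eta_P$, with $\kappa \simeq \Ind_{J_P^+(\L)}^{P^+(\L_\oE^m) J^1(\L)} \kappa_P$. The only non-routine point is the chain of group equalities in the first step, which ultimately relies on the equation (\ref{JP}) of Lemma \ref{invariantsbis}; beyond that identification no new ingredient is needed, and no serious obstacle is expected.
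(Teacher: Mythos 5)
Your proposal is correct and follows exactly the route the paper intends: the corollary is stated without separate proof precisely because, once one checks via the second hypothesis of (\ref{hypotheses}) and equation (\ref{JP}) that $P^+(\L_\oE^m)J^1(\L)=(P^+(\L_\oE)\cap P)J^1(\L)=(J^+(\L)\cap P)J^1(\L)=J^1(\L)J_P^+(\L)$, everything is a restatement of Lemma \ref{invariantsbis}. Your explicit verification of this chain of equalities (using $P_1(\L_\oE)\subseteq J^1(\L)$ and the fact that $P^+(\L_\oE)$ normalises $J^1(\L)$) is exactly the step the paper leaves implicit.
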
 
 
Bien entendu on s'int\'eressera particuli\`erement aux prolongements de $\eta$ prolongeant de plus $\eta(\L^m, \L)$. Or les techniques de $J^1 \cap U$-invariants 
permettent, dans nos hypoth\`eses, de pr\'eciser la structure de cette repr\'esentation (comme dans \cite{S5}, d\'emonstration de la Proposition 6.3). 
Rappelons la d\'ecomposition ${\eta_P}_{| J^1(\L) \cap M}\simeq \eta^{(0)} \otimes \bigotimes_{j=1}^k \tilde\eta^{(j)}$ et notons : 

-- $\eta({\L^m}^{(0)}, \L^{(0)})$ le prolongement  de $\eta^{(0)}$ 
\`a $J^1({\L^m}^{(0)}, \L^{(0)})$ donn\'e par la proposition \ref{etamM} ; 

-- $\widetilde \eta({\L^m}^{(j)}, \L^{(j)})$ le prolongement  de $\widetilde \eta^{(j)}$
\`a $\widetilde J^1({\L^m}^{(j)}, \L^{(j)})$ donn\'e par \cite[Proposition 3.12]{S5}.

\begin{Proposition}\label{etamMP}
Sous les hypoth\`eses (\ref{hypotheses}), 
soit $\eta_P(\L^m, \L)$ la repr\'esentation du groupe 
$J^1_P(\L^m, \L)= (P_1(\L_\oE^m)\cap P) J^1_P(\L)$ 
obtenue par restriction de $\eta(\L^m, \L)$ au sous-espace de ses $J^1(\L) \cap U$-invariants. On a 
$\eta(\L^m, \L) \simeq \Ind_{J^1_{P}(\L^m,\L )}^{ J^1(\L^m, \L )} \ \eta_P(\L^m, \L)$. 
De plus $\eta_P(\L^m, \L)$ est d\'efinie par : 
$$
\aligned
\eta_P(\L^m, \L)  &= \eta_P  \  \text{ sur }  J^1_P(\L), 
\\
\eta_P(\L^m, \L)  &= 1 \  \text{ sur }   P^1(\L_\oE^m) \cap U,  
\\
\eta_P(\L^m, \L)  &= \eta({\L^m}^{(0)}, \L^{(0)})  \otimes \bigotimes_{j=1}^k  
\widetilde \eta({\L^m}^{(j)}, \L^{(j)}) 
\\ &\qquad \text{ sur }   J^1_P(\L^m, \L)\cap M = J^1({\L^m}^{(0)}, \L^{(0)}) \times \prod_{j=1}^k\widetilde J^1({\L^m}^{(j)}, \L^{(j)}) .  
\endaligned
$$
\end{Proposition}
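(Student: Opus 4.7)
The plan is to adapt the proof of \cite[Proposition 6.3]{S5}, which establishes an analogous statement under the stronger hypothesis that $V = \oplus_j W^{(j)}$ is properly subordinate to $[\L,n,0,\b]$. Here the decomposition is only subordinate to $[\L,n,0,\b]$, but remains properly subordinate to $[\L^m,n,0,\b]$, and this still suffices to set up the $J^1(\L)\cap U$-invariants machinery on the intermediate group $J^1(\L^m,\L) = P_1(\L^m_\oE) J^1(\L)$.

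First I would verify that $J^1(\L^m,\L)$ admits an Iwahori decomposition with respect to $(P,M)$ whose $P$-part is precisely $J^1_P(\L^m,\L)$. This follows by combining the Iwahori decomposition of $J^1(\L)$ (from the subordination to $\L$, cf.\ \cite[\S 5.3]{S5}), that of $P_1(\L^m_\oE)$ (from proper subordination to $\L^m$), and the factorisation $P_1(\L^m_\oE) = (P_1(\L^m_\oE)\cap P)\,P_1(\L_\oE)$ extracted from the pro-$p$ part of the second clause of hypothesis (\ref{hypotheses}). Repeating the $J^1(\L)\cap U$-invariants argument of Corollary \ref{invariants} at this $J^1$-level then yields a representation $\eta_P(\L^m,\L)$ of $J^1_P(\L^m,\L)$ satisfying the induction isomorphism announced; triviality on $P^1(\L^m_\oE)\cap U$ is automatic from this invariants construction and the Iwahori decomposition, while the equality $\eta_P(\L^m,\L)|_{J^1_P(\L)} = \eta_P$ is immediate from Proposition \ref{etamM}(i), which gives $\eta(\L^m,\L)|_{J^1(\L)} = \eta$.

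It remains to identify $\eta_P(\L^m,\L)$ on the Levi piece $J^1_P(\L^m,\L)\cap M = J^1({\L^m}^{(0)},\L^{(0)}) \times \prod_{j=1}^k \widetilde J^1({\L^m}^{(j)},\L^{(j)})$. For this I would invoke the intertwining characterization of Corollary \ref{intertwiningeta}: $\eta(\L^m,\L)$ is the unique extension of $\eta$ to $J^1(\L^m,\L)$ intertwining with $\eta^m$ on the intersection $J^1(\L^m)\cap J^1(\L^m,\L)$. The tensor product $\eta({\L^m}^{(0)},\L^{(0)})\otimes \bigotimes_{j=1}^k \widetilde \eta({\L^m}^{(j)},\L^{(j)})$ is an extension of $\eta_P$ restricted to the Levi intersection which, by Corollary \ref{intertwiningeta} and its $\widetilde G$-analogue \cite[Proposition 3.12]{S5} applied block by block, intertwines with the corresponding tensor-product restriction of $\eta^m_P$ to $M$. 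Extending this product by $\eta_P$ on $J^1_P(\L)$ and trivially on the unipotent part yields a representation of $J^1_P(\L^m,\L)$ whose induction to $J^1(\L^m,\L)$ fulfils the criterion of Corollary \ref{intertwiningeta}; by uniqueness this induction must be isomorphic to $\eta(\L^m,\L)$, and taking $J^1(\L)\cap U$-invariants on both sides forces the asserted equality.

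The main technical obstacle lies in this last step: tracking the decomposition of the various intertwining spaces across the blocks $W^{(j)}$ and checking that the block-wise intertwining properties do assemble into the full intertwining demanded by the characterization. This parallels the computation in \cite[Proposition 6.3]{S5}, and is made tractable by the proper subordination of $V = \oplus_j W^{(j)}$ to $\L^m$, which already ensures that $\eta^m_P$ itself factors on $M$ as a tensor product of analogous block-wise representations.
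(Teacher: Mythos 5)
Your overall architecture matches the paper's: build the candidate representation explicitly on the three pieces, induce it to $J^1(\L^m,\L)$, and identify the result with $\eta(\L^m,\L)$ by an intertwining/uniqueness argument, then read off the $J^1(\L)\cap U$-invariants. However, the identification step as you state it has a genuine gap. You invoke Corollaire \ref{intertwiningeta} to characterise $\eta(\L^m,\L)$ as the unique extension of $\eta$ intertwining with $\eta^m$ on $J^1(\L^m)\cap J^1(\L^m,\L)$ (and its block-wise analogues against $\eta^m_P$ on $M$). But that corollary is only proved under the hypothesis $\fa_0(\L^m)\subseteq\fa_0(\L)$, which is \emph{not} part of the hypotheses (\ref{hypotheses}): the standing assumption is only $\fb_0(\L^m)\subseteq\fb_0(\L)$, and the whole point of the extended Proposition \ref{etamM} (and of the later Proposition \ref{intertwiningbis}, which adds $\fa_0(\L^m)\subseteq\fa_0(\L)$ as an explicit extra hypothesis) is that this containment can fail. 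Without it, the characterisation you rely on is simply not available, and the uniqueness argument does not close. The paper's fix is to verify instead the defining property of Proposition \ref{etamM}(ii) against an auxiliary sequence $\L''$ with $\fb_0(\L'')=\fb_0(\L^m)$ and $\fa_0(\L'')\subseteq\fa_0(\L)$: one reduces, via (\ref{obvious}), irreducibility of the two inductions, and the Iwahori decompositions, to showing $\Hom_{J^1_P(\L'')\cap J^1_P(\L^m,\L)}(\eta_P'',\phi)\ne\{0\}$, which is checked block by block on $M$ — and it is there, on the blocks of $\L''$ versus $\L$, that Corollaire \ref{intertwiningeta} and \cite[Proposition 3.12]{S5} legitimately apply.

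A secondary point: you define $\eta_P(\L^m,\L)$ as the invariants restriction and then assert that triviality on $P^1(\L_\oE^m)\cap U$ is ``automatic''. It is not: since $\fb_0(\L^m)\subseteq\fb_0(\L)$ gives $P_1(\L_\oE)\subseteq P_1(\L_\oE^m)$, the group $P^1(\L_\oE^m)\cap U$ is in general strictly larger than $P_1(\L_\oE)\cap U$ and is not contained in $J^1(\L)\cap U$, so its action on the $J^1(\L)\cap U$-invariants must be computed. The paper avoids this by running the logic in the opposite direction: it \emph{defines} a representation $\phi$ by the three formulas of the statement (checking it is a homomorphism using that $\theta$ is normalised by $P^+(\L_\oE)\supseteq P_1(\L^m_\oE)$), proves $\Ind\,\phi\simeq\eta(\L^m,\L)$ via the $\L''$ argument above, and only then concludes that $\phi$ coincides with the invariants restriction. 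Reorganising your proof along these lines would repair both issues.
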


\begin{proof} La premi\`ere assertion d\'ecoule du corollaire 
\ref{invariants}. Les d\'efinitions donn\'ees d\'eter\-mi\-nent bien une repr\'esentation $\phi$ de 
$J^1_P(\L^m, \L)$ : elles recouvrent tout le groupe, 
 elles sont   compatibles et on v\'erifie ais\'ement, gr\^ace au fait que $\theta$ est normalis\'e par 
$P^+(\L_\oE)$ qui contient $P_1(\L^m_\oE)$, que $\phi$ est un homomorphisme.   

D'autre part $J^1(\L^m, \L) =  J^1(\L) J^1_P(\L^m, \L) $ (une seule double classe) car 
$P^1(\L_\oE^m) = [(P^+ (\L_\oE) \cap P ) \ P_1(\L_\oE)] \cap P^1(\L_\oE^m) 
=  (P^1 (\L^m_\oE) \cap P ) \ P_1(\L_\oE) $.  
Ainsi l'induite $\Phi$ de $\phi$ \`a $J^1(\L^m, \L)$ est irr\'eductible et prolonge $\eta$  puisque  $$\Res_{J^1(\L)}\Phi = \Res_{J^1(\L)}\Ind_{J^1_P(\L^m, \L)}^{J^1(\L^m, \L)} \phi 
= \Ind_{J^1_P(\L)}^{J^1(\L)} \eta_P = \eta.
$$

Reste \`a voir que $\Phi$ est bien le prolongement de $\eta$ d\'efini \`a la  proposition  \ref{etamM}. Soit donc $\L^{''}$ une suite de
$\oE$-r\'eseaux telle que $\fb_0(\L^{\prime\prime})= \fb_0(\L^m)$ et $\fa_0(\L^{\prime\prime})\subseteq \fa_0(\L)$ ; il nous faut montrer que 
$ 
\Ind_{J^1(\L^{\prime\prime} )}^{  P_1(\L^{\prime\prime} )} \, \eta^{\prime\prime} 
\;  \simeq 
\; 
\Ind_{P_1(\L_{\oE}^m) J^1(\L )}^{  P_1(\L^{\prime\prime})} \,  \Phi   
	$. Ces deux induites sont   irr\'eductibles (voir preuve du lemme \ref{mult}) et par 
  (\ref{obvious}) il suffit de  prouver 
$$ 
\Hom_{J^1_P(\L^{\prime\prime}) \cap J^1_P(\L^m, \L)}(\eta_P^{\prime\prime}, \phi) \ne \{ 0 \}.$$ 
Les  groupes $J^1_P(\L^{\prime\prime})$ et $  J^1_P(\L^m, \L)$ ont   une d\'ecomposition d'Iwahori par rapport \`a $(P,M)$ et   les deux repr\'esentations consid\'er\'ees sont triviales sur les intersections avec $U$ et $U^{-}$. Enfin,  les restrictions \`a 
 $J^1_P(\L^{\prime\prime})\cap M$ et $  J^1_P(\L^m, \L)\cap M$ sont \'evidemment entrelac\'ees par d\'efinition de  $\eta({\L^m}^{(0)}, \L^{(0)})$  et $ 
\widetilde \eta({\L^m}^{(j)}, \L^{(j)}) $ et via le Corollaire \ref{intertwiningeta}, c.q.f.d.  
\end{proof}

Terminons par une variante de la Proposition \ref{intertwining}:  
\begin{Proposition} \label{intertwiningbis}   
Pla\c cons-nous dans  les hypoth\`eses (\ref{hypotheses}) et 
supposons   $  \fa_0(\L^m)$ contenu dans $ \fa_0(\L)  $.   
 Soit $\kappa^m$ un prolongement   de $\eta^m$ \`a $J^+(\L^m)$ 
 et $\kappa $ un prolongement   de $\eta(\L^m, \L) $ \`a $P^+(\L^m_{\oE})  J^1(\L)$. 
 Alors 
$$ 
\aligned
\kappa =  \mathfrak B_{\L^m \, \L}    (\kappa^m) \quad &\iff \quad 
\Hom_{J^+_P(\L^m) \cap J^+_P(\L)}(\kappa^m_P, \kappa_P) \ne \{ 0 \} 
\\
&\iff \quad 
\Hom_{J^+_{P^{-}}(\L^m) \cap J^+_P(\L)}(\kappa^m_{P^{-}}, \kappa_P) \ne \{ 0 \} 
\endaligned
$$ 
 \end{Proposition}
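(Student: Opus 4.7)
I would combine Proposition~\ref{intertwining} with the parabolic-induction realizations of $\kappa^m$ and $\kappa$ from their $J^1\cap U$-invariants. Under the properly subordinate hypothesis in~(\ref{hypotheses}), $\kappa^m\simeq \Ind_{J^+_P(\L^m)}^{J^+(\L^m)}\kappa^m_P$, together with its $P^-$-analogue; Corollary~\ref{invariants} gives $\kappa\simeq \Ind_{J^+_P(\L)}^{P^+(\L^m_\oE)J^1(\L)}\kappa_P$.

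For the \emph{sufficient} direction of each equivalence, I would apply~(\ref{obvious}) with ambient group $K=P^+(\L^m_\oE)P_1(\L^m)$, which contains both $J^+(\L^m)$ and $P^+(\L^m_\oE)J^1(\L)$. Transitivity of induction identifies $\Ind_{J^+_P(\L^m)}^K\kappa^m_P\simeq \Ind_{J^+(\L^m)}^K\kappa^m$ and $\Ind_{J^+_P(\L)}^K\kappa_P\simeq \Ind_{P^+(\L^m_\oE)J^1(\L)}^K\kappa$; a nonzero Hom on $J^+_P(\L^m)\cap J^+_P(\L)$ thus lifts to a nonzero Hom between these two irreducible $K$-modules (irreducibility as in Lemma~\ref{mult}), forcing them to be isomorphic, which is exactly the defining property of $\kappa=\mathfrak B_{\L^m\,\L}(\kappa^m)$. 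The same argument, with $P^-$ in place of $P$ on the $\kappa^m$ side, handles the second equivalence.

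For the \emph{necessary} direction, assume $\kappa=\mathfrak B_{\L^m\,\L}(\kappa^m)$. By Proposition~\ref{intertwining} there exists a nonzero $\phi\in \Hom_L(\kappa^m,\kappa)$ with $L=J^+(\L^m)\cap P^+(\L^m_\oE)J^1(\L)$. I would define $\psi:\kappa^m_P\to\kappa_P$ by
$$\psi(v)\;=\;\oint_{U\cap J^1(\L)}\kappa(u)\,\phi(v)\,du ;$$
this lands in the $U\cap J^1(\L)$-invariants (i.e., in $\kappa_P$), and it intertwines $L_P=J^+_P(\L^m)\cap J^+_P(\L)$, since $L_P\subseteq L$ and $L_P$ normalizes $U\cap J^1(\L)$ (as a subgroup of $P$ normalizing both $U$ and $J^1(\L)$). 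The crucial point is to show $\psi\ne 0$, which requires a refinement of Lemma~\ref{projecteur}: one must choose the test vector already invariant under $U\cap J^1(\L^m)$ inside the space of $\eta^m$. Using the Iwahori decomposition of $H^1(\L)\cap J^1(\L^m)$ with respect to $(P,M)$ and the block description of $\eta^m_P$ provided by Proposition~\ref{etamMP}, the nonvanishing of the integral reduces to blockwise analogues of Lemma~\ref{projecteur} applied to the $\GL$-factors via \cite[Proposition 3.12]{S5} and to the $G_E$-factor via Proposition~\ref{etamM}. The $P^-$ variant follows by the symmetric construction starting from $\kappa^m\simeq \Ind_{J^+_{P^-}(\L^m)}^{J^+(\L^m)}\kappa^m_{P^-}$.

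The main obstacle is precisely this refinement of Lemma~\ref{projecteur}: establishing the compatibility between the $\theta$-isotypic projector and the functor of $J^1\cap U$-invariants. That compatibility is the technical core of the argument and relies crucially on the explicit block structure of $\eta_P(\L^m,\L)$ given in Proposition~\ref{etamMP}, which allows the averaging integral to be split into independent contributions, each reduced to a known instance of the projector lemma.
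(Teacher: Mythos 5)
Your overall architecture is the right one and coincides with the paper's: the implications $\Longleftarrow$ are elementary via (\ref{obvious}) (the paper phrases this as Proposition \ref{intertwining} combined with Frobenius reciprocity and the first term of the Mackey decomposition), and for $\Longrightarrow$ one composes the canonical intertwiner coming from Proposition \ref{intertwining} with the inclusion of, and projection onto, the subspaces of $J^1\cap U$-invariants, the whole point being to show that the composite $\kappa_P\to\kappa^m_{P'}$ is nonzero.

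The difficulty is that the step you yourself identify as the crux --- the ``refinement of Lemma \ref{projecteur}'' --- is left unproven, and the reduction you sketch for it (splitting the averaging integral according to the block structure of $\eta_P(\L^m,\L)$ from Proposition \ref{etamMP} and invoking blockwise analogues of the projector lemma via Proposition \ref{etamM} and \cite[Proposition 3.12]{S5}) is both heavier than necessary and not carried out; since this is the technical core, the proof is incomplete as it stands. The paper's resolution is more economical: exactly as in Lemma \ref{projecteur}, Frobenius and Mackey reduce the nonvanishing of $\oint_{H^1_P(\L)\cap J^+_{P'}(\L^m)}\theta_P(x^{-1})\,\kappa^m_{P'}(x)\,v\,dx$ to the single statement $\Hom_{H^1_P(\L)\cap H^1_{P'}(\L^m)}(\theta_P,\theta^m_{P'})\ne\{0\}$, which is checked term by term on the Iwahori decompositions of the two groups: both characters are trivial on the intersections with $U$ and $U^{-}$, and on the intersection with $M$ they are transfers of one another. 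No information about the $\eta$'s or their block structure is needed there; the multiplicity-one input (Lemma \ref{mult}, transported to the level of $\eta_P$ by Frobenius) enters separately, to identify the image of the first half of the composite with the $\theta_P$-isotypic component --- an ingredient your sketch does not make explicit, although it is what guarantees that your $\phi$ (necessarily a multiple of the canonical intertwiner, the Hom space being one-dimensional) does not kill the relevant vectors. Finally, your justification that $\psi$ intertwines $L_P$ (``$L_P$ normalizes $U\cap J^1(\L)$ as a subgroup of $P$'') is incorrect as stated, since $J^+_P(\L)$ contains $H^1(\L)\cap U^{-}$ and is not contained in $P$; one should instead appeal to the stability of the invariant subspace established in Corollary \ref{invariants}.
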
 
\begin{proof} 
Les implications $\Longleftarrow$ sont  \'el\'ementaires via la proposition \ref{intertwining}
, en combinant  r\'eciprocit\'e de Frobenius et d\'ecomposition de Mackey (dont le premier terme suffit).

Reprenons maintenant la d\'emonstration de  \ref{intertwining} en posant $P' = P$ ou 
$P^{-}$ et avec les m\^emes notations. On veut montrer que 
la compos\'ee suivante est non nulle : 
$$
\kappa_P \stackrel{\mathcal J_P}{\hookrightarrow} \kappa 
\stackrel{\mathcal J}{\hookrightarrow}  
\Ind_{P^+(\L_{\oE}^m) J^1(\L )}^{P^+(\L_{\oE}^m) P_1(\L^m)} \kappa 
\quad \stackrel{\mathcal E}{\longrightarrow} \quad  \Ind_{J^+(\L^m)}^{P^+(\L_{\oE}^m) P_1(\L^m)} \kappa^m 
\stackrel{\mathcal P}{\longrightarrow} \kappa^m 
\stackrel{\mathcal P_{P'}}{\longrightarrow} \kappa^m_{P'}
$$ 
Les ingr\'edients sont les m\^emes :

--	La multiplicit\'e de $\eta_P$ dans $\Ind_{P^+(\L_{\oE}^m) J^1(\L )}^{P^+(\L_{\oE}^m) P_1(\L^m)} \kappa $ est $1$ par r\'eciprocit\'e de Frobenius puisque    celle de $\eta    \simeq \Ind_{J^1_{P}(\L )}^{  J^1(\L )}\eta_{P} $ est $1$ (lemme \ref{mult}).  
	L'image de $\mathcal E \circ j \circ j_P$ est donc la composante isotypique de type $\theta_P$ de  
	$\Ind_{J^+(\L^m)}^{P^+(\L_{\oE}^m) P_1(\L^m)} \kappa^m $.

	-- Il existe un vecteur $v$ de l'espace de $\kappa^m_{P'}$ tel que 
	$$ \oint_{H^1_P(\L) \cap J^+_{P'}(\L^m)} \theta_P(x^{-1}) \ \kappa^m_{P'}(x) \ v \, dx  \ne 0 . 
$$
En effet, il suffit (par Frobenius et Mackey comme pour le lemme \ref{projecteur}) de montrer que 
	 $\Hom_{H^1_P(\L) \cap H^1_{P'}(\L^m)} (\theta_P,  \theta^m_{P'})   \ne \{ 0\}$. L'intersection des groupes $H^1_P(\L)$ et $ H^1_{P'}(\L^m)$ se calcule terme \`a terme sur leur     d\'ecomposition  d'Iwahori. Or les ca\-rac\-t\`eres  sont triviaux sur les intersections avec   $U$ et 
$U^{-}$ et  sont tranferts l'un de l'autre sur l'intersection avec $M$.  
\end{proof}

\subsection{Bijections canoniques et restriction de Jacquet  }\label{Jacquet} 

Soit $     V = \oplus_{j=-k}^k W^{(j)}    $ une  d\'ecomposition  autoduale   proprement subordonn\'ee \`a 
  $[\L,n,0,\b]$~; elle
   est aussi proprement subordonn\'ee \`a 
$   [\L',n',0,\b]   $. 
Soit  $M $  le sous-groupe de Levi de $G^+$ fixant la d\'ecomposition 
$V = \oplus_{j=-k}^k W^{(j)}$ et $P$  un sous-groupe parabolique de $G^+$ de facteur de Levi $M$ et radical unipotent $U$ ; on note $P^{-}$ et $U^{-}$ les oppos\'es de $P$ et $U$ par rapport \`a $M$.

\begin{Lemma}\label{RP} 
L'application $\mathbf r_P$ : $\kappa \mapsto {\kappa_P}_{| J^+(\L) \cap M}$ est une bijection de l'ensemble des prolongements de 
$\eta$ \`a $J^+(\L)$ sur l'ensemble  des prolongements de ${\eta_P}_{| J^1_P(\L) \cap M}$ \`a 
$J^+_P(\L) \cap M$. 
\end{Lemma} 
\begin{proof}
D\'ecoule de  $J^+(\L) \cap U = J^1(\L) \cap U$ et 
$ J^+_P(\L)= (J^+(\L) \cap M ) J^1_P(\L)$.  
\end{proof}

La bijection canonique $\mathfrak B$  de 
 \cite[Lemma 4.3]{S5}   (Proposition \ref{bijcan} ci-dessus) est compatible \`a la bijection $\mathbf r_P$   : 
 
 \begin{Proposition}\label{diagcom1}
 Le diagramme suivant est commutatif : 
 $$
 \aligned 
  \{\text{Prolongements de } \eta \text{ \`a } J^+(\L)\} \quad 
 &\stackrel{\mathbf r_P}{\longrightarrow} \quad 
  \{\text{Prolongements de } {\eta_P}_{| J^1(\L) \cap M} \text{ \`a } J^+(\L) \cap M\} 
  \\
 \mathfrak B_{\L \L^\prime} \updownarrow \qquad \qquad \qquad \qquad 
 &\qquad \qquad \qquad \qquad \qquad \mathfrak B   \updownarrow 
  \\
    \{\text{Prolongements de } \eta' \text{ \`a } J^+(\L')\} \quad 
 &\stackrel{\mathbf r_P}{\longrightarrow} \quad 
  \{\text{Prolongements de } {\eta'_P}_{| J^1(\L') \cap M} \text{ \`a } J^+(\L') \cap M\} 
  \endaligned
 $$
 De plus, les bijections $ \ \mathfrak B \ $ et $ \ \mathbf r_P \ $ ci-dessus sont 
 compatibles \`a la torsion par les caract\`eres de 
 $P^+( \L_{\oE})/ P^1( \L_{\oE})= P^+( \L'_{\oE})/ P^1( \L'_{\oE})$. 
 \end{Proposition}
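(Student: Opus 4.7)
Mon plan est de caract\'eriser la bijection canonique $\mathfrak{B}_{\L\L'}$ par une condition d'entrelacement via la Proposition \ref{intertwiningbis}, puis de transf\'erer cette condition au niveau du sous-groupe de Levi $M$ gr\^ace au foncteur $\mathbf{r}_P$ d'invariants par $J^1(\L)\cap U$. Je commencerai par une r\'eduction au cadre de la Proposition \ref{bijcan} : soit $\L^M$ une suite autoduale de $\oE$-r\'eseaux telle que $\fb_0(\L^M)$ soit un $\oE$-ordre autodual maximal contenant $\fb_0(\L)=\fb_0(\L')$, avec $V = \oplus_{j=-k}^k W^{(j)}$ subordonn\'ee (non n\'ecessairement proprement) \`a $[\L^M,n^M,0,\b]$. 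Par unicit\'e dans la Proposition \ref{etamM}, on a $\eta(\L,\L^M) = \eta(\L',\L^M)$ comme repr\'esentations du groupe commun $P_1(\L_\oE) J^1(\L^M)= P_1(\L'_\oE) J^1(\L^M)$, et la bijection $\mathfrak{B}_{\L\L'}$ est caract\'eris\'ee par : $\kappa\leftrightarrow\kappa'$ si et seulement si $\mathfrak{B}_{\L\L^M}(\kappa)=\mathfrak{B}_{\L'\L^M}(\kappa')$ dans $P^+(\L_\oE) J^1(\L^M)$. Il suffit donc d'\'etablir la commutativit\'e du diagramme pour chacune des paires $(\L,\L^M)$ et $(\L',\L^M)$, dans la situation $\fa_0(\L) \subseteq \fa_0(\L^M)$ qui rel\`eve directement de la Proposition \ref{bijcan}.

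Dans cette situation r\'eduite, la Proposition \ref{intertwiningbis} caract\'erise la correspondance $\kappa \leftrightarrow \kappa^M$ par la non nullit\'e de $\Hom_{J^+_P(\L^M)\cap J^+_P(\L)}(\kappa^M_P, \kappa_P)$. En exploitant les d\'ecompositions d'Iwahori de $J^+_P(\L)$ et $J^+_P(\L^M)$ relatives \`a $(M,P)$ et la trivialit\'e des $\kappa_P$ et $\kappa^M_P$ sur les sous-groupes d'unipotents, cet entrelacement \'equivaut \`a la condition analogue sur $M$, \`a savoir $\Hom_{(J^+(\L)\cap J^+(\L^M))\cap M}(\mathbf{r}_P(\kappa^M), \mathbf{r}_P(\kappa)) \ne \{0\}$. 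Comme $M = G^{(0)} \times \prod_{j=1}^k \widetilde{G}^{(j)}$ et que la restriction ${\eta_P}_{|J^1(\L)\cap M}$ se factorise en produit tensoriel, l'application de la Proposition \ref{intertwining} au facteur en $G^{(0)}$ et de son analogue pour le groupe lin\'eaire aux facteurs $\widetilde{G}^{(j)}$ montre que cette condition caract\'erise la bijection canonique sur $M$ -- d'o\`u la commutativit\'e cherch\'ee.

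La compatibilit\'e avec la torsion par les caract\`eres de $P^+(\L_\oE)/P^1(\L_\oE)$ r\'esulte alors du Lemme \ref{Faits}(2) appliqu\'e aux fl\`eches verticales et de la commutation \'el\'ementaire entre torsion par un caract\`ere, passage aux $J^1\cap U$-invariants et restriction \`a $M$. La principale difficult\'e technique attendue r\'eside dans l'\'etape de r\'eduction initiale : il faudra s'assurer que la caract\'erisation de $\mathfrak{B}_{\L\L'}$ par factorisation \`a travers $\L^M$ co\"\i ncide bien avec la d\'efinition sous-jacente au Lemme \ref{Faits}(1), et v\'erifier que les d\'ecompositions d'Iwahori des groupes $J^+_P$ aux trois niveaux $\L$, $\L'$, $\L^M$ s'articulent de mani\`ere suffisamment compatible pour que les intersections pertinentes se calculent terme \`a terme relativement \`a $(M,P)$.
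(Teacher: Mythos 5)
Your core mechanism --- characterizing the canonical bijection by a non-vanishing intertwining space, transferring that condition to $M$ via the Iwahori decompositions and the triviality of the $\kappa_P$'s on the intersections with $U$ and $U^-$, and concluding with (\ref{obvious}) and the factor-by-factor use of Proposition \ref{intertwining} on $M$ --- is exactly the engine of the paper's proof, and the torsion statement via the Lemme \ref{Faits} is handled the same way. The problem lies in your reduction, which goes in the opposite direction from the paper's. The paper reduces (via \cite[\S 4]{S5} and \cite[Lemme 2.10]{S5}) to the case $\fa_0(\L)\subseteq\fa_0(\L')$, where both lattice sequences still have the decomposition \emph{properly} subordinate to them, so that only Proposition \ref{intertwining} and the standard $J^+_P$ machinery are needed. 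You instead factor through a maximal $\L^M$ lying above $\L$ and $\L'$. The two diagrams you then need for the pairs $(\L,\L^M)$ and $(\L',\L^M)$ are a relabelling of Proposition \ref{diagcom2}, and every tool you invoke for them --- Proposition \ref{intertwiningbis}, and the very definition of $\mathbf r_P$ on extensions of $\eta(\L,\L^M)$ to $P^+(\L_\oE)J^1(\L^M)$ via Corollaire \ref{invariants} and Lemme \ref{RPbis} --- is established only under the hypotheses (\ref{hypotheses}), which after relabelling require $P^+(\L_\oE)=(P^+(\L^M_\oE)\cap P)\,P_1(\L^M_\oE)$ in addition to subordination. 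You assume only that the decomposition is subordinate to $\L^M$; that does not yield this identity, which is a genuine constraint tying the choice of the maximal self-dual order to $P$ (compare (\ref{plusplus}), where two \emph{different} maximal orders must be used, one adapted to $P$ and one to $P^-$). You neither state nor verify that an $\L^M$ with this property exists for a general properly subordinate decomposition $V=\oplus_{j=-k}^k W^{(j)}$, so as written the middle horizontal arrow of your reduced diagrams is not even defined.

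The remaining ingredients of your plan are sound: the identification $\eta(\L,\L^M)=\eta(\L',\L^M)$ from Proposition \ref{etamM}, the characterization of $\mathfrak B_{\L\L'}$ as $\mathfrak B_{\L'\L^M}^{-1}\circ\mathfrak B_{\L\L^M}$ (composition of canonical bijections), and the final torsion compatibility. To close the gap with the least effort, replace your descent to $\L^M$ by the paper's ascent: reduce to $\fa_0(\L)\subseteq\fa_0(\L')$ (interpolating if necessary by a sequence $\L''$ with $\fb_0(\L'')=\fb_0(\L)$ and $\fa_0(\L'')\subseteq\fa_0(\L)\cap\fa_0(\L')$), in which case $\mathfrak B_{\L\L'}$ is directly the bijection of Proposition \ref{bijcan}, Proposition \ref{intertwining} applies without the hypotheses (\ref{hypotheses}), and your intertwining-plus-Iwahori argument goes through verbatim.
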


 La bijection canonique $\mathfrak B$ du c\^ot\'e droit est bien s\^ur d\'efinie comme le    produit 
 de $\mathfrak B^{(0)} =\mathfrak B_{\L^{(0)} {\L^\prime}^{(0)}}$ et des bijections canoniques 
 $\widetilde {\mathfrak B}^{(j)} $ 
 entre prolongements de 
 $\tilde\eta^{(j)}$ et de $\tilde{\eta^\prime}^{(j)}$ 
\cite[\S 4.3]{S5}. 
 En effet le caract\`ere semi-simple $\theta'$ est un produit 
 $\theta^{'(0)} \otimes \bigotimes_{j=1}^k (\tilde\theta^{'(j)})^2$  comme ci-dessus, o\`u 
 $\theta^{'(0)}$ est le transfert de $\theta^{(0)}$  et chaque $\tilde\theta^{'(j)}$, pour 
 $j > 0$, le transfert de $\tilde\theta^{(j)}$.

\begin{proof} 
Gr\^ace \`a la technique de \cite[\S 4]{S5}, bas\'ee sur \cite[Lemme 2.10]{S5}, il suffit de d\'emontrer cette assertion 
si 
$\fa_0(\L ) \subseteq \fa_0(\L^\prime ) $, ce que nous supposons d\'esormais. 

Les applications consid\'er\'ees sont des   bijections,  il suffit donc de montrer que  si ${\kappa_P}_{| J^+(\L) \cap M}$ 
et ${\kappa_P^\prime}_{| J^+(\L^\prime) \cap M}$ se  correspondent par la  
  bijection canonique, alors 
$\kappa^\prime $ et $\kappa$ aussi. L'hypoth\`ese se traduit en 
$  \ \Hom_{J^+ (\L ) \cap J^+ (\L^\prime) \cap M }(\kappa_P, \kappa_P^\prime) \ne \{ 0 \}  \  $ (Proposition \ref{intertwining}). Or $J^+_P (\L ) $ et $J^+_P (\L^\prime)$ ont tous deux une d\'ecomposition d'Iwahori  et $\kappa_P^\prime $ et $\kappa_P$ 
sont   toutes deux triviales sur les intersections avec 
$U$ et $U^{-}$ (comme $\eta_P^\prime $ et $\eta_P$). Ainsi 
$\Hom_{J^+_P (\L ) \cap J^+_P (\L^\prime)   }(\kappa_P, \kappa_P^\prime) \ne \{ 0 \} $, d'o\`u  
$\Hom_{P^+(\L_{\oE}) P_1(\L)  }(
\Ind_{  J^+_P(\L )}^{P^+(\L_{\oE}) P_1(\L)}\kappa_P,
 \Ind_{  J^+_P(\L^\prime )}^{P^+(\L_{\oE}) P_1(\L)}\kappa_P^\prime) \ne \{ 0 \} $ par (\ref{obvious}), et le r\'esultat. 
 \end{proof}

 \medskip 
Pour \'etudier le cas o\`u la subordination n'est pas propre, nous nous pla\c cons 
 d\'esormais sous les hypoth\`eses (\ref{hypotheses}) qui   permettent de d\'efinir 
 $\kappa_P$ pour tout prolongement de $\eta$ \`a 
 $P^+(\L_\oE^m)J^1(\L)$, et  
    d'\'enoncer des variantes du lemme   et de la proposition ci-dessus.
        
  \begin{Lemma}\label{RPbis}  Sous les hypoth\`eses (\ref{hypotheses}), 
l'application $\mathbf r_{P}$ : $\kappa \mapsto {\kappa_{P}}_{| J^+_P(\L) \cap M}$ est une bijection de l'ensemble des prolongements de 
$\eta(\L^m, \L)$ \`a  $P^+(\L_\oE^m)J^1(\L)$ sur l'ensemble  des prolongements \`a 
$J_P^+(\L) \cap M$ de la repr\'esentation
 $$\eta_{P|M}(\L^m, \L) = {\eta_P(\L^m, \L)}_{| J_P^1(\L^m,\L) \cap M}.$$
\end{Lemma} 

\begin{proof}
Certainement $\kappa \mapsto {\kappa_{P}}$ est une bijection de l'ensemble des prolongements de $\eta$ \`a 
$P^+(\L_\oE^m)J^1(\L)$  sur l'ensemble  des prolongements de $\eta_P$ \`a 
$J_P^+(\L) $ (Corollaire \ref{invariants}). C'est pour conserver une bijection en restreignant \`a $J_P^+(\L) \cap M$ qu'il faut travailler sur les prolongements de 
$\eta(\L^m, \L)$. Notons d'abord les propri\'et\'es 
indispensables :  
\begin{equation}\label{decompositions}
\aligned
J^+(\L) \cap M&=J_P^+(\L) \cap M=(P^+(\L_\oE)\cap M) \ 
(J^1(\L)\cap M)
\\
J_P^+(\L) \ \  &= (H^1(\L) \cap U^{-}) \ 
(J_P^+(\L)\cap M) \ (P^+(\L_\oE)\cap U) \ (J^1(\L)\cap U)
\\
P^+(\L_\oE)\cap U &= P^+(\L_\oE^m) \cap U = P^1(\L_\oE^m)\cap U
\\
P^+(\L_\oE)\cap M &= P^+(\L_\oE^m) \cap M
\endaligned
\end{equation}
qui d\'ecoulent de l'hypoth\`ese,  
du paragraphe pr\'ec\'edent  et du fait que la d\'ecomposition est proprement subordonn\'ee \`a $\L^m$. 
L'injectivit\'e s'en d\'eduit car 
$\eta_P(\L^m, \L)$ est triviale sur $H^1(\L)\cap U^{-}$ (comme $\eta_P$), sur $J^1(\L)\cap U$ 
(comme $\eta_P$) et sur $P^1(\L_\oE^m)\cap U$ 
(Proposition \ref{etamMP}).

Pour la surjectivit\'e, on remarque que 
$J_P^+(\L)  = (J_P^+(\L) \cap M) \ J^1_P(\L^m, \L)$. 
Il faut v\'erifier que si  
$\xi$ est un prolongement de ${\eta_{P|M}(\L^m, \L)}$ \`a 
$J_P^+(\L) \cap M$, alors $\phi(xy)=\xi(x)\eta_P(\L^m, \L)(y)$, $x \in  J_P^+(\L) \cap M$, 
$y \in J^1_P(\L^m, \L)$, d\'efinit une repr\'esentation de 
$J_P^+(\L)$. Cela revient \`a montrer  que $\eta_P(\L^m, \L)(y)\xi(x)=\xi(x)
\eta_P(\L^m, \L)(x^{-1}yx)$ ($x \in P^+(\L_\oE^m)\cap M$, 
$y $ comme ci-dessus). C'est imm\'ediat, gr\^ace \`a 
 la trivialit\'e de 
$\eta_P(\L^m, \L)(y)$ pour $y \in P^1(\L_\oE^m)\cap U$ (Proposition \ref{etamMP}).
\end{proof}

 \begin{Proposition}\label{diagcom2} 
 Sous les hypoth\`eses (\ref{hypotheses}), 
 le diagramme suivant est commutatif : 
 $$
 \aligned 
  \{\text{Prolongements de } \eta^m \text{ \`a } J^+(\L^m)\} \qquad  
  &\stackrel{\mathbf r_P}{\longrightarrow}  \quad 
  \{\text{Prolong. de } {\eta_P^m}_{| J^1(\L^m) \cap M} \text{ \`a } J^+(\L^m) \cap M\} 
  \\
 \mathfrak B_{\L^m \L} \updownarrow \qquad \qquad \qquad \qquad 
 &\qquad \qquad \qquad \qquad \qquad \mathfrak B \updownarrow 
  \\
    \{\text{Prolong. de } \eta(\L^m,\L) \text{ \`a }  P^+(\L_{\oE}^m) J^1(\L)\}      
  &\stackrel{\mathbf r_{P}}{\longrightarrow}   
  \{\text{Prolong. de } {\eta_{P|M}(\L^m, \L)} \text{ \`a } J^+(\L)   \cap \!  M\} 
  \endaligned
 $$
 \end{Proposition}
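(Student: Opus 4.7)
Le plan est de suivre la stratégie de la Proposition \ref{diagcom1}, en rempla\c cant partout les énoncés valables dans le cas ``proprement subordonné'' par leurs variantes dans le cadre des hypothèses (\ref{hypotheses}) : essentiellement, Corollaire \ref{invariants}, Proposition \ref{etamMP} et Proposition \ref{intertwiningbis} à la place de \ref{invariants}, \ref{etamM} et \ref{intertwining}.

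Commen\c cons par observer que les trois flèches horizontales et verticales du diagramme sont des bijections (Lemmes \ref{RP}, \ref{RPbis} et Proposition \ref{bijcan}). Il suffit donc d'établir une seule implication~: si $\kappa = \mathfrak B_{\L^m \, \L}(\kappa^m)$, alors $\mathbf r_P(\kappa)$ est l'image de $\mathbf r_P(\kappa^m)$ par la bijection canonique $\mathfrak B$ sur le groupe de Levi $M$. Ensuite, comme à la fin de la preuve de \ref{diagcom1}, nous pouvons d'abord nous ramener par \cite[Lemme 2.10]{S5} au cas auxiliaire  $\fa_0(\L^m)\subseteq \fa_0(\L)$, ce qui autorise l'utilisation de la Proposition \ref{intertwiningbis}.

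Par cette dernière, l'hypothèse $\kappa=\mathfrak B_{\L^m \, \L}(\kappa^m)$ équivaut à $\Hom_{J^+_P(\L^m)\cap J^+_P(\L)}(\kappa^m_P,\kappa_P)\ne\{0\}$. Or les deux groupes $J^+_P(\L^m)$ et $J^+_P(\L)$ possèdent une décomposition d'Iwahori par rapport à $(M,P)$ (cf.\ (\ref{decompositions}) et Corollaire \ref{invariants}), et les restrictions de $\kappa^m_P$ et $\kappa_P$ aux sous-groupes unipotents sont triviales : sur $J^1(\L^m)\cap U=J^+(\L^m)\cap U$ et $J^1(\L)\cap U$ par construction des invariants, sur $P^1(\L_\oE^m)\cap U$ via la Proposition \ref{etamMP}, et sur les intersections avec $U^{-}$ par la trivialité de $\theta^m_P$ et $\theta_P$. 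Un argument standard sur les décompositions d'Iwahori ramène alors le Hom ci-dessus à
\[
\Hom_{(J^+_P(\L^m)\cap M)\cap(J^+_P(\L)\cap M)}\bigl({\kappa^m_P}|_M,{\kappa_P}|_M\bigr)\ne\{0\}.
\]

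Il reste à interpréter cette non-nullité. D'après la décomposition rappelée avant la Proposition \ref{etamMP}, la restriction ${\kappa^m_P}|_{J^+(\L^m)\cap M}$ est un produit tensoriel $\kappa^{m,(0)}\otimes\bigotimes_{j=1}^k\widetilde\kappa^{m,(j)}$ de prolongements des $\eta^{(0)}$ et $\tilde\eta^{(j)}$, et de m\^eme pour $\kappa_P|_M$, produit des prolongements correspondants des $\eta({\L^m}^{(0)},\L^{(0)})$ et $\widetilde\eta({\L^m}^{(j)},\L^{(j)})$ donnés par la Proposition \ref{etamMP}. La non-nullité précédente se factorise donc en non-nullité sur chaque facteur. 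Appliquer la Proposition \ref{intertwining} dans le composant $G^{(0)}$ et sa variante linéaire \cite[Proposition 3.12]{S5} pour chaque composant $\widetilde G^{(j)}$ donne exactement que chaque $\kappa^{(j)}_P|_M$ est l'image de $\kappa^{m,(j)}_P|_M$ par la bijection canonique appropriée. Le produit de ces correspondances est, par définition, la bijection canonique $\mathfrak B$ du c\^oté droit du diagramme, ce qui conclut.

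La principale difficulté consistera à vérifier soigneusement la réduction aux composants de la décomposition d'Iwahori, notamment à s'assurer que la trivialité sur $P^1(\L_\oE^m)\cap U$ fournie par la Proposition \ref{etamMP} est compatible avec la trivialité correspondante pour la strate $\L^m$ elle-m\^eme (où la subordination est propre), de sorte que l'argument d'intersection produise exactement le Hom attendu sur $M$ sans reste parasite issu des parties unipotentes.
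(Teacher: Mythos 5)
Your proof is correct and takes essentially the same route as the paper, whose own proof simply states that the argument of Proposition \ref{diagcom1} carries over using the Iwahori decompositions (\ref{decompositions}) and the triviality of $\kappa^m_P$ and $\kappa_P$ on the intersections with $U$ and $U^{-}$ (Proposition \ref{etamMP}), exactly the ingredients you invoke. You merely traverse the equivalence in the opposite direction (from the top row down to the Levi, via Proposition \ref{intertwiningbis}, rather than from the Levi up via (\ref{obvious})), which is immaterial since all four maps are bijections and the intertwining characterizations are genuine equivalences.
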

 \begin{proof}
 La d\'emonstration est celle de \ref{diagcom1}, on utilise les d\'ecompositions d'Iwahori  (\ref{decompositions}) 
 et le fait que les repr\'esentations $\kappa_P^m$ et  
 $\kappa_P$ sont triviales sur les intersections avec $U$ et $U^{-}$, comme $\eta_P^m$ et  $\eta_P(\L^m, \L)$ (proposition \ref{etamMP}). 
 \end{proof}

\setcounter{section}{1}

\section{Une repr\'esentation de Weil}\label{Sec2}

\subsection{Des sous-groupes remarquables}\label{remarquables}

Soit   $[\L,n,0,\b]$  une  
  strate gauche semi-simple et $V = V^1 \perp \cdots \perp V^l$   la   d\'ecomposition de $V$ associ\'ee. 
Soit $V = Z_1 \perp \cdots \perp Z_t$ une d\'ecomposition moins fine de $V$ : chaque $Z_j$, $1 \le j \le t$, 
est somme de certains $V^i$. La strate $[\L,n,0,\b]$ 
est alors somme directe de strates semi-simples $[S_j=\L\cap Z_j,n_j,0,\b_j=\b_{|Z_j}]$ dans chaque 
$\End_F(Z_j)$. 
On souhaite  examiner  en d\'etail la structure de $\eta^\PM$, l'unique repr\'e\-sen\-ta\-tion irr\'eductible de $J^1(\L^\PM)$ contenant le caract\`ere semi-simple gauche $\theta^\PM$ de  $H^1(\L^\PM)$, et  de ses prolongements $\kappa^\PM$ \`a $J^+(\L^\PM)$, en la comparant \`a celle des objets analogues d\'efinis dans le groupe 
$$G_\interieur^+ = G^+ \cap \prod_{1 \le j \le t} \GL_F(Z_j)$$ 
\`a partir des strates  $[S_j ,n_j,0,\b_j]$  et des caract\`eres semi-simples gauches correspondant  \`a $\theta^\PM$.
Remarquons que le commutant $G_E^+$ de $\beta$ dans $G^+$ 
v\'erifie 
$$G_E^+ \  \subseteq \   G^+ \cap \prod_{1 \le i \le l} \GL_F(V^i) 
\  \subseteq G_\interieur^+ .$$

Consid\'erons le groupe $J^1_\interieur(\L^\PM)=  
 J^1(\L^\PM) \cap G_\interieur^+ $.   
La d\'ecomposition $V\! =\! Z_1 \perp\! \cdots \! \perp Z_t$ est subordonn\'ee \`a la strate $[\L,n,0,\b]$ et les r\'esultats de \cite[\S 5.1, \S 5.2]{S5} s'appliquent~: 
$$ 
H^1(\L^\PM)\cap G_\interieur^+ = \prod_{1 \le j \le t} H^1(S_j),  \ \   
\theta_{|H^1(\L^\PM)\cap G_\interieur^+} = \otimes _{1 \le j \le t} \ 
\theta(S_j), $$
o\`u les  $\theta(S_j)$ sont des caract\`eres semi-simples gauches  attach\'es aux strates $[S_j ,n_j,0,\b_j]$  et transferts de 
$\theta$ au sens de {\it loc. cit.,} Proposition 5.5, et 
$$
J^1_\interieur(\L^\PM) =  \prod_{1 \le j \le t} J^1(S_j) . 
$$
 
Soit  $J^1_\ext(\L^\PM)$  l'orthogonal de $H^1(\L) J^1_\interieur(\L^\PM)$  pour la forme bilin\'eaire 
altern\'ee non d\'eg\'en\'er\'ee usuelle $(x,y) \longmapsto \, <x,y> =  \theta^\PM([x,y])$ sur 
$J^1(\L^\PM)/H^1(\L^\PM)$. 
La restriction de cette forme  aux sous-espaces orthogonaux 
$H^1(\L) J^1_\interieur(\L^\PM)/ H^1(\L^\PM)$ et $J^1_\ext(\L^\PM)/ H^1(\L^\PM)$ reste non d\'eg\'en\'er\'ee et l'on d\'efinit comme d'habitude la repr\'esentation irr\'eductible 
$\eta_\interieur^\PM$ de $J^1_\interieur(\L^\PM)$ (resp. $\eta_\ext^\PM$ 
de $J^1_\ext(\L^\PM)$) comme  l'unique repr\'esentation irr\'eductible (\`a isomorphisme pr\`es) dont la restriction \`a $H^1(\L^\PM) \cap G_\interieur^+$  (resp. 
$H^1(\L^\PM)$)  contient $\theta^\PM$ (et en est multiple). 
On reconna\^\i t en outre via {\it loc.cit.} que  
$\eta_\interieur^\PM$  est isomorphe \`a 
$\otimes_{1 \le j \, \le t} \ \eta(S_j)$. 

Comme $J^1_\interieur(\L^\PM)J^1_\ext(\L^\PM)= J^1(\L^\PM)$ et 
$(H^1(\L^\PM)J^1_\interieur(\L^\PM) ) \cap J^1_\ext(\L^\PM)= H^1(\L^\PM)$, on obtient une r\'ealisation de la repr\'esentation $\eta^\PM$ 
en posant pour $g \in J^1(\L^\PM)$ :
\begin{equation}\label{tensor}
\eta^\PM(g) = \eta^\PM_\ext(g_\ext) \otimes \eta^\PM_\interieur(g_\interieur) \  \text{ si }
g = g_\ext \ g_\interieur  \  \text{ avec } 
g_\ext \in J^1_\ext(\L^\PM),   \  g_\interieur \in   J^1_\interieur(\L^\PM).
\end{equation}

D\'efinissons dans  $\widetilde G = GL(V)$ et 
 $\widetilde G_\interieur  =  \prod_{1 \le j \le t} \GL_F(Z_j)$ les sous-groupes analogues 
  $\widetilde J^1_\interieur(\L^\PM) = \widetilde J^1(\L^\PM) \cap \widetilde G_\interieur$ 
  et son orthogonal 
 $\widetilde J^1_\ext(\L^\PM)$ pour la forme  $\tilde\theta^\PM([x,y])$,  o\`u $\tilde \theta$ est un caract\`ere semi-simple gauche de $\widetilde H^1(\L^\PM)$ de restriction $\theta$ \`a 
 $ H^1(\L^\PM)$.  Soit $Q$ un sous-groupe parabolique de $\widetilde G$ de facteur de Levi $\tilde G_\interieur$, $N$ son radical unipotent et $N^-$ le radical unipotent du parabolique oppos\'e 
	par rapport \`a  $\tilde G_\interieur$. D'apr\`es \cite[Proposition 5.2, Lemma 5.6(iii)]{S5} on a : 
	$$\widetilde J^1_\ext(\L^\PM) = (\widetilde J^1(\L^\PM) \cap N^-) 
	(\widetilde J^1(\L^\PM) \cap N ) \widetilde H^1(\L^\PM).
	$$
\begin{Lemma}\label{intersection}
\begin{enumerate}
	\item On a $  J^1_\ext(\L^\PM) = \widetilde J^1_\ext(\L^\PM)\cap G$,     ind\'ependant de~$\theta$. 
\item 
On a 
pour tout 
 $g \in G_E^+$ : 
 $$
 J^1(\L^\PM) \cap g J^1(\L^\PM) g^{-1} = 
 (  J^1_\ext(\L^\PM) \cap g J^1_\ext(\L^\PM) g^{-1} ) . 
 (J^1_\interieur(\L^\PM) \cap g J^1_\interieur(\L^\PM) g^{-1}).  
 $$
 \end{enumerate}
 \end{Lemma}
\begin{proof}
(i) De $ \  J^1_\interieur(\L^\PM) = \widetilde J^1_\interieur(\L^\PM)\cap G \ $ 
on d\'eduit par orthogonalit\'e :  $H^1_\ext(\L^\PM) \subseteq\widetilde J^1_\ext(\L^\PM)\cap G \subseteq J^1_\ext(\L^\PM)  $. 
L'\'egalit\'e d\'ecoule alors de 
$ \widetilde J^1 (\L^\PM)\cap G = ( \widetilde J^1_\interieur(\L^\PM)\cap G) ( \widetilde J^1_\ext(\L^\PM)\cap G)$ dont la d\'emonstration est un cas particulier de celle de (ii) ci-dessous.

(ii)
 La difficult\'e technique est que $G_\interieur^+$ n'est pas un sous-groupe de Levi de $G^+$. 
 Il faut  remonter dans $\widetilde G = GL(V)$ 
 et utiliser la d\'ecomposition d'Iwahori de $\widetilde J^1(\L^\PM)$ relativement 
\`a   $Q$    pour calculer l'intersection  
$\widetilde J^1(\L^\PM) \cap g \widetilde J^1(\L^\PM) g^{-1}$ terme \`a terme. Comme $g$ appartient \`a 
$G_E^+$ contenu dans $G_\interieur^+$,  on obtient ainsi l'\'egalit\'e analogue dans $\widetilde G$. Pour revenir \`a $G^+$ on prend les points fixes de l'involution adjointe : on applique  
l'argument de Stevens   \cite[Theorem 2.3]{S1}, en posant 
$H = \widetilde J^1_\interieur(\L^\PM) \cap g 
\widetilde J^1_\interieur(\L^\PM) g^{-1}$,  
$U =  \widetilde J^1_\ext(\L^\PM) \cap g \widetilde J^1_\ext(\L^\PM) g^{-1}$,  et en remarquant que 
$\widetilde J^1_\ext(\L^\PM) $ et 
$\widetilde J^1_\interieur(\L^\PM) $
se normalisent mutuellement puisque $[\widetilde J^1(\L^\PM) , \widetilde J^1(\L^\PM) ] \subset \widetilde H^1(\L^\PM)$.   \end{proof}

\begin{Proposition}\label{operateurs}
\begin{enumerate}
	\item 
Les repr\'esentations $\eta_\ext^\PM$ et $\eta^\PM_\interieur$ sont entrelac\'ees par $G_E^+$.
\item  
 Soit $g \in G_E^+$ et soit $E^\PM_\ext(g)$, resp. $E^\PM_\interieur(g)$,   un 
 op\'erateur d'entrelacement de 
 $\eta_\ext^\PM$, resp. $\eta^\PM_\interieur$,  en $g$.
Alors $E^\PM(g) = E^\PM_\ext(g) \otimes 
 E^\PM_\interieur(g)$ est un op\'erateur d'entrelacement de 
 $\eta^\PM$ en $g$.
 Tout op\'erateur d'entrelacement de 
 $\eta^\PM$ en $g$ est de cette forme. 
\item Les espaces d'entre\-la\-ce\-ment 
de $\eta_\ext^\PM$ et de $\eta^\PM_\interieur$ en $g \in G_E^+$ sont de dimension $1$.
\end{enumerate}
\end{Proposition}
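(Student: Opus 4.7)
The plan is to combine three structural inputs: the Heisenberg-type tensor factorization (\ref{tensor}), the intersection decomposition of Lemme \ref{intersection}(ii), and Stevens's classical theorem on intertwining of semi-simple characters \cite{S4} which, for any strate gauche semi-simple $[\mathcal L, n, 0, \beta]$, asserts that $\eta$ is intertwined by the unitary centralizer of $\beta$ with intertwining spaces of dimension one at each point.

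First I apply this classical theorem to the global stratum $[\L^\PM, n^\PM, 0, \beta]$ to get the $\eta^\PM$-part of (iii) directly. Then, using the identification $\eta^\PM_\interieur \simeq \bigotimes_{j=1}^t \eta(S_j)$ recalled in \S\ref{remarquables}, and applying the same theorem to each strate semi-simple $[S_j, n_j, 0, \beta_j]$ in $\End_F(Z_j)$, one obtains that each $\eta(S_j)$ is intertwined by $G^+_{E, Z_j}$, the centralizer of $\beta_j$ in the unitary group of $Z_j$, with one-dimensional intertwining spaces. Since any $g \in G_E^+$ restricts to an element of $G^+_{E, Z_j}$ on each $Z_j$, taking tensor products establishes (i) and shows that $\eta^\PM_\interieur$ is intertwined by $G_E^+$ with one-dimensional intertwining spaces, with an explicit operator $E^\PM_\interieur(g) = \bigotimes_j E(S_j, g_{|Z_j})$.

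For (ii) and the $\eta^\PM_\ext$-part of (iii): fix $g \in G_E^+$ and choose $E^\PM(g)$ and $E^\PM_\interieur(g)$ as above. By formula (\ref{tensor}), $\eta^\PM(x) = \eta^\PM_\ext(x) \otimes \bIV$ for $x \in J^1_\ext(\L^\PM)$ and $\eta^\PM(y) = \bIV \otimes \eta^\PM_\interieur(y)$ for $y \in J^1_\interieur(\L^\PM)$. Consider the operator
$$T := E^\PM(g) \circ (\bIV \otimes E^\PM_\interieur(g))^{-1}$$
on $W_\ext \otimes W_\interieur$. Using the intertwining relations and the product factorization of $J^1(\L^\PM) \cap g^{-1} J^1(\L^\PM) g$ provided by Lemme \ref{intersection}(ii), a direct computation shows that $T$ commutes with $\bIV \otimes \eta^\PM_\interieur(y)$ for $y \in J^1_\interieur(\L^\PM) \cap g^{-1} J^1_\interieur(\L^\PM) g$, and satisfies $T(\eta^\PM_\ext(x) \otimes \bIV) = (\eta^\PM_\ext(g x g^{-1}) \otimes \bIV)\, T$ for $x \in J^1_\ext(\L^\PM) \cap g^{-1} J^1_\ext(\L^\PM) g$. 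A double-commutant argument on the second tensor factor then forces $T = E^\PM_\ext(g) \otimes \bIV$, producing an intertwining operator for $\eta^\PM_\ext$ at $g$ and yielding the tensor factorization $E^\PM(g) = E^\PM_\ext(g) \otimes E^\PM_\interieur(g)$. The natural injective map
$$\Hom(\eta^\PM_\ext, (\eta^\PM_\ext)^g) \otimes \Hom(\eta^\PM_\interieur, (\eta^\PM_\interieur)^g) \longrightarrow \Hom(\eta^\PM, (\eta^\PM)^g)$$
then has nonzero image and a target of dimension one, so both sides have dimension one, simultaneously giving (ii) and completing (iii).

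The principal obstacle is the double-commutant step: to conclude $T = E^\PM_\ext(g) \otimes \bIV$ from the commutation with $\bIV \otimes \eta^\PM_\interieur(y)$, one needs the commutant of $\eta^\PM_\interieur|_{J^1_\interieur(\L^\PM) \cap g^{-1} J^1_\interieur(\L^\PM) g}$ in $\End(W_\interieur)$ to reduce to scalars, which is not automatic from irreducibility of $\eta^\PM_\interieur$. The cleanest way around is to exploit the tensor decomposition $\eta^\PM_\interieur = \bigotimes_j \eta(S_j)$ factor by factor, using the Heisenberg construction to check the scalar-commutant property for each $\eta(S_j)$ on the corresponding intersection, or to construct $E^\PM_\ext(g)$ directly by mimicking, on the sub-symplectic space $J^1_\ext(\L^\PM)/H^1(\L^\PM)$ equipped with the form $\theta^\PM([\cdot,\cdot])$, the standard argument producing the intertwining operator for $\eta^\PM$ itself.
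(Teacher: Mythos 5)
Your overall strategy runs in the opposite direction from the one that works, and the step you single out as ``the principal obstacle'' is not the only gap. First, the operator $T := E^\PM(g)\circ(\bIV\otimes E^\PM_\interieur(g))^{-1}$ is not defined in general: an intertwining operator of a Heisenberg representation at a point $g$ outside the defining compact group is an element of $\Hom_{J^1\cap gJ^1g^{-1}}(\eta,\eta^g)$ and is typically not invertible. Second, even granting $T$, the double-commutant step fails for the reason you yourself identify: $\eta^\PM_\interieur$ restricted to $J^1_\interieur(\L^\PM)\cap gJ^1_\interieur(\L^\PM)g^{-1}$ is highly reducible, so its commutant in $\End(X_\interieur)$ is much larger than the scalars; and your first proposed repair (checking a ``scalar-commutant property'' factor by factor for each $\eta(S_j)$) founders on exactly the same point, since each $\eta(S_j)$ restricted to the corresponding intersection is again reducible. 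Finally, part (i) for $\eta^\PM_\ext$ is never established independently in your argument --- it is only extracted from the flawed construction of $T$.

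The paper's proof inverts the logic and thereby avoids all of this. Both $\eta^\PM_\ext$ and $\eta^\PM_\interieur$ are Heisenberg representations lying over $\theta^\PM$, so the fact that $G_E^+$ intertwines $\theta^\PM$, combined with the standard multiplicity-one intertwining lemma applied to each of them (not only to $\eta^\PM$), gives (i) directly and guarantees that $E^\PM_\ext(g)$ and $E^\PM_\interieur(g)$ exist and are nonzero. Lemme \ref{intersection}(ii) shows that $J^1(\L^\PM)\cap gJ^1(\L^\PM)g^{-1}$ is the product of the exterior and interior intersections, so by (\ref{tensor}) the operator $E^\PM_\ext(g)\otimes E^\PM_\interieur(g)$ is a nonzero element of $\Hom_{J^1(\L^\PM)\cap gJ^1(\L^\PM)g^{-1}}(\eta^\PM,(\eta^\PM)^g)$ --- a direct verification, with nothing to invert. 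Since that Hom space is one-dimensional, every intertwining operator of $\eta^\PM$ at $g$ is a scalar multiple of this tensor product, which is (ii); and the injectivity of the natural map from the tensor product of the two factor Hom spaces into this line forces each factor to be a line, which is (iii). This is essentially your closing ``injective map with one-dimensional target'' observation, but it must be fed with intertwiners produced at the level of $\theta^\PM$, not extracted from $E^\PM(g)$.
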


\begin{proof}
$G_E^+$ entrelace $\theta^\PM$, donc $\eta_\ext^\PM$ et $\eta^\PM_\interieur$. Pour le deuxi\`eme point,  comme la dimension de l'espace d'entrelacement de $\eta^\PM$ en $g$ est $1$, il suffit de montrer que   
$E^\PM_\ext(g) \otimes 
 E^\PM_\interieur(g)$ entrelace effectivement $\eta^\PM$. Cela n'est qu'une v\'erification \'etant donn\'e le lemme ci-dessus. Le troisi\`eme point  d\'ecoule du deuxi\`eme vu 
 la propri\'et\'e semblable de $\eta^\PM$. 
\end{proof}

 Le groupe $P^+(\L^\PM_\oE)$, qui est contenu dans 
$ G_\interieur^+$, normalise $J^1(\L^\PM)$ et $H^1(\L^\PM)$ 
et entrelace le caract\`ere $\theta$ ; il normalise donc 
$H^1(\L^\PM) J^1_\interieur(\L^\PM)$ et son orthogonal $J^1_\ext(\L^\PM)$ pour la forme $
< , >$   sur 
$J^1(\L^\PM)/H^1(\L^\PM)$. 
Consid\'erons  un prolongement $\kappa^\PM$ de $\eta^\PM$ 
\`a $J^+(\L^\PM) = P^+(\L^\PM_\oE) J^1(\L^\PM)$ et un prolongement 
$\kappa^\PM_\interieur$ de $\eta^\PM_\interieur   $ \`a $P^+(\L^\PM_\oE) J^1_\interieur(\L^\PM)
=   \prod_{1 \le j \le t} J^+(S_j) $ : il en existe par 
 \cite[\S 4.1]{S5}. D'apr\`es la proposition \ref{operateurs}, pour tout $g \in P^+(\L^\PM_\oE)$,  l'op\'erateur d'entrelacement 
$\kappa^\PM(g)$   s'\'ecrit de fa\c con unique sous la forme 
$\kappa^\PM(g) = E^\PM_\ext(g) \otimes \kappa_\interieur^\PM(g)$ 
et les op\'erateurs $E^\PM_\ext(g)$ ainsi d\'efinis fournissent une repr\'esentation de 
$P^+(\L^\PM_\oE) $ dans l'espace de $\eta_\ext^\PM$, triviale sur $P_1(\L^\PM_\oE)$ 
(qui est contenu dans $H^1(\L^\PM)$). 
Autrement dit :

\begin{Corollary}\label{Eext}
Il existe des repr\'esentations   de $P^+(\L^\PM_\oE) /P_1(\L^\PM_\oE)$ 
dans l'espace de $\eta_\ext^\PM$  
  entrela\c cant $    \       \eta_\ext^\PM$ ; elles diff\`erent entre elles d'un caract\`ere. 
Tout choix d'une telle repr\'esentation $\mathcal X $ d\'etermine une bijection 
$ \Interieur_{\mathcal X } $ entre   prolongements de   $\eta^\PM$ 
\`a $J^+(\L^\PM)$ et   prolongements  de $ \eta_\interieur $
  \`a $ P^+(\L^\PM_\oE) J^1_\interieur(\L)   $, donn\'ee par : 
  $$\kappa^\PM(g) \  = \ \mathcal X(g) \ \otimes \  \Interieur_{\mathcal X }(\kappa)(g), \quad  \  g \in P^+(\L^\PM_\oE). 
  $$
\end{Corollary}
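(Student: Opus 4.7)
My plan is to repackage Proposition \ref{operateurs} into a statement about extensions. First I would fix any extensions $\kappa^\PM$ of $\eta^\PM$ and $\kappa^\PM_\interieur$ of $\eta^\PM_\interieur$ (available by \cite[\S 4.1]{S5}), and use part (ii) of Proposition \ref{operateurs} to decompose uniquely $\kappa^\PM(g) = E^\PM_\ext(g) \otimes \kappa^\PM_\interieur(g)$ for each $g \in P^+(\L^\PM_\oE) \subset G_E^+$. The assignment $g \mapsto E^\PM_\ext(g)$ is then a representation: since $\kappa^\PM$ and $\kappa^\PM_\interieur$ are homomorphisms and $P^+(\L^\PM_\oE)$ normalizes both $J^1_\ext(\L^\PM)$ and $J^1_\interieur(\L^\PM)$, the uniqueness of the tensor decomposition in Proposition \ref{operateurs}(ii) forces $E^\PM_\ext$ to be multiplicative. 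Triviality on $P_1(\L^\PM_\oE)$ comes from the fact that $P_1(\L^\PM_\oE) \subset H^1(\L^\PM) \cap G_\interieur^+$, so that both $\kappa^\PM$ and $\kappa^\PM_\interieur$ act on this subgroup by the same scalar $\theta^\PM$, forcing $E^\PM_\ext$ to be the identity there.

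For the uniqueness up to a character, given two candidates $\mathcal X$ and $\mathcal X'$, part (iii) of Proposition \ref{operateurs} shows that for every $g \in P^+(\L^\PM_\oE)$ the operator $\mathcal X'(g) \circ \mathcal X(g)^{-1}$ intertwines $\eta^\PM_\ext$ in $g$ and is therefore scalar, equal to some $\chi(g) \in \mathbb C^\times$. Multiplicativity of $\mathcal X$ and $\mathcal X'$ then turns $\chi$ into a character of $P^+(\L^\PM_\oE)/P_1(\L^\PM_\oE)$.

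For the bijection, given $\mathcal X$ and an extension $\kappa$ of $\eta^\PM$ to $J^+(\L^\PM)$, I would define $\Interieur_\mathcal X(\kappa)$ by extracting the $\interieur$-factor of the tensor decomposition of $\kappa(g)$ on $P^+(\L^\PM_\oE)$ via Proposition \ref{operateurs}(ii), and by $\eta^\PM_\interieur$ on $J^1_\interieur(\L^\PM)$; the two prescriptions agree on the intersection $P^+(\L^\PM_\oE) \cap J^1_\interieur(\L^\PM) = P_1(\L^\PM_\oE)$ since $\mathcal X$ is trivial there. The inverse map sends $\kappa_\interieur \mapsto \mathcal X \otimes \kappa_\interieur$ on $P^+(\L^\PM_\oE)$, glued with $\eta^\PM$ on $J^1(\L^\PM)$ via the tensor realisation (\ref{tensor}). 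The only genuinely nontrivial step is checking the multiplicativity of $g \mapsto E^\PM_\ext(g)$, which rests on the compatibility of the tensor decomposition with conjugation by $P^+(\L^\PM_\oE)$; this compatibility is the content of the paragraph immediately preceding the statement, namely that $P^+(\L^\PM_\oE)$ preserves the orthogonal decomposition $J^1(\L^\PM)/H^1(\L^\PM) = J^1_\ext/H^1 \oplus J^1_\interieur H^1/H^1$ underlying Proposition \ref{operateurs}. Once this is in hand, the corollary is a formal reorganisation of that proposition.
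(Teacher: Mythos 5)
Your proposal is correct and follows exactly the paper's route: the paper's ``proof'' is the paragraph immediately preceding the corollary, which fixes extensions $\kappa^\PM$ and $\kappa^\PM_\interieur$, invokes the unique tensor decomposition of Proposition \ref{operateurs}(ii) to define $E^\PM_\ext(g)$, and notes triviality on $P_1(\L^\PM_\oE)\subset H^1(\L^\PM)$. You merely make explicit a few steps the paper leaves implicit (multiplicativity via uniqueness of the decomposition, the character comparison via the one-dimensionality in Proposition \ref{operateurs}(iii)), all consistent with the paper's argument.
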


Une telle repr\'esentation $\mathcal X $ \'etant fix\'ee, on notera \'egalement $ \Interieur_{\mathcal X } $ la bijection reliant de la m\^eme fa\c con prolongements de   $\eta^\PM$ 
\`a $T J^1(\L^\PM)$ et   prolongements  de $ \eta_\interieur $
  \`a $ T J^1_\interieur(\L)   $, pour tout sous-groupe $T$ de $ P^+(\L^\PM_\oE)$ 
  contenant $ P_1(\L^\PM_\oE)$. 

\subsection{Mise en place de la restriction de Jacquet}

Donnons-nous \`a pr\'esent, pour $1 \le j \le t$, des d\'ecompositions des sous-espaces $Z_j$ subordonn\'ees aux strates  $[S_j ,n_j,0,\b_j]$ et autoduales (\cite[\S 5.3]{S5}) :
$$ Z_j = \oplus_{r=-m_j}^{m_j} W_j^{(r)},$$ 
o\`u l'orthogonal de $W_j^{(r)}$ dans $Z_j$ est 
$\oplus_{s\ne -r} W_j^{(s)}$. On demande que $W_j^{(0)}$ soit non nul pour au plus un $j$ et l'on somme ces d\'ecompositions de fa\c con disjointe, c'est-\`a-dire 
que la d\'ecomposition de $V$ obtenue, soit 
$ V= \oplus_{i=-m}^{m} W^{(i)},$ v\'erifie 
que pour tout $i$, $-m \le i \le m$, il y a un unique $j$, 
$1 \le j \le t$, et un unique $r$, $-m_j \le r \le m_j$, tels que  $W^{(i)} = W_j^{(r)}$ et $W^{(-i)} = W_j^{(-r)}$. 
On obtient une d\'ecomposition autoduale de $V$ subordonn\'ee \`a la strate 
$[\L,n,0,\b]$.

Soit $M$ le sous-groupe de Levi de $G^+$ stabilisant cette d\'ecomposition et $P$ un sous-groupe parabolique de $G^+$ de facteur de Levi $M$. On note $U$ le radical unipotent de $P$ et $U^-$ son oppos\'e par rapport \`a $M$. 
D'apr\`es \cite[Corollaire 5.10]{S5} les groupes 
$H^1(\L^\PM)$ et $J^1(\L^\PM)$ ont une d\'ecomposition d'Iwahori par rapport \`a $(M, P)$. Puisque $M$ est contenu dans $G_\interieur^+ $ et que les d\'ecompositions induites sur les 
$Z_j$ sont subordonn\'ees aux strates  $[S_j ,n_j,0,\b_j]$, le groupe 
$H^1(\L^\PM) J^1_\interieur (\L^\PM)$ a lui aussi une  d\'ecomposition d'Iwahori par rapport \`a $(M, P)$.

 Rappelons enfin 
(\cite[Proposition 7.2.3]{BK}, \cite[Lemma 5.6]{S5}) que l'on a une d\'ecomposition de l'espace symplectique $J^1(\L^\PM)/ H^1(\L^\PM)$ en 
$$
\frac{J^1(\L^\PM)}{ H^1(\L^\PM)} \  = \  \frac{J^1(\L^\PM)\cap M}{ H^1(\L^\PM)\cap M} \ \perp \  \left(\frac{J^1(\L^\PM)\cap U^-}{ H^1(\L^\PM)\cap U^- } \oplus 
\frac{J^1(\L^\PM)\cap U}{ H^1(\L^\PM))\cap U} \right) 
$$
dans laquelle les deux derniers sous-espaces sont totalement isotropes, en dualit\'e~; on a une d\'ecomposition analogue pour $H^1(\L^\PM) J^1_\interieur (\L^\PM)/H^1(\L^\PM) $. On en d\'eduit que 
$J^1_\ext (\L^\PM)$ a lui aussi une  d\'ecomposition d'Iwahori par rapport \`a $(M, P)$
et que   :  
\begin{itemize}
\item $J^1_\ext (\L^\PM) \   = \  (J^1_\ext(\L^\PM)\cap U^\PMopp)  \ \  (H^1(\L^\PM)\cap M)  \  \  (J^1_\ext(\L^\PM)\cap U^\PM)$. 
\item 
$J^1 (\L^\PM)\cap U^- = (J^1_\ext (\L^\PM)\cap U^- ) 
(J^1_\interieur (\L^\PM)\cap U^-)$, $J^1 (\L^\PM)\cap U = (J^1_\ext (\L^\PM)\cap U ) 
(J^1_\interieur (\L^\PM)\cap U)$, 
\item  $(H^1(\L^\PM)\cap U^\PM)(J^1_\interieur (\L^\PM)\cap U)$ 
est l'orthogonal de $J^1_\ext (\L^\PM)\cap U^-$ dans 
$J^1 (\L^\PM)\cap U$, 
\item  $  J^1_\ext (\L^\PM)\cap U $ 
est l'orthogonal de $(J^1_\interieur (\L^\PM)\cap U^-)(H^1(\L^\PM)\cap U^-)$ dans 
$J^1 (\L^\PM)\cap U$. 
\end{itemize}
 
Examinons \`a pr\'esent   les $J^1(\L^\PM) \cap U^\PM$-invariants de $\eta$ dans le mod\`ele (\ref{tensor}). 
Notons $X_\ext$ l'espace de $\eta_\ext^\PM$, $X_\interieur$ l'espace de $\eta_\interieur^\PM$ et 
$X = X_\ext \otimes X_\interieur$ celui de $\eta^\PM$. 
  L'espace $X_\ext^{J_\ext^1(\L^\PM) \cap U^\PM}$ est de dimension $1$ 
car $H^1(\L^\PM)(J_\ext^1(\L^\PM) \cap U^\PM) /H^1(\L^\PM) $ est totalement isotrope maximal dans 
$J_\ext^1(\L^\PM)   /H^1(\L^\PM)$  et 
$\theta_{|H^1(\L)\cap U}=1$. Le choix 
d'un vecteur non nul $x_\ext^\PM \in   X_\ext^{J_\ext^1(\L^\PM) \cap U^\PM}$ nous donne un isomorphisme :
\begin{equation}\label{xext}
\aligned 
X^{J^1(\L^\PM) \cap U^\PM} = X_\ext^{J_\ext^1(\L^\PM) \cap U^\PM} \otimes X_\interieur^{J_\interieur^1(\L^\PM) \cap U^\PM} \quad  & \stackrel{\Phi}{\longrightarrow}     \quad X_\interieur^{J_\interieur^1(\L^\PM) \cap U^\PM}
\\
x_\ext^\PM \otimes v \qquad &\mapsto \qquad  v 
\endaligned
\end{equation}
L'espace $X^{J^1(\L^\PM) \cap U^\PM}$ est une repr\'esentation de $$J^1_P(\L^\PM) = H^1 (\L^\PM)
(J^1(\L^\PM) \cap P)= H^1 (\L^\PM)(J^1_\interieur(\L^\PM) \cap P)
(J^1_\ext(\L^\PM) \cap U)$$ et l'isomorphisme (\ref{xext}) 
 commute \`a l'action de $  J^1_\interieur(\L^\PM)\cap P^\PM  
$. 
D'autre part $P^+(\L_\oE^\PM) \cap P^\PM$ normalise $J^1(\L^\PM) \cap U^\PM$,  
$J_\ext^1(\L^\PM) \cap U^\PM$ et $J_\interieur^1(\L^\PM) \cap U^\PM$ donc agit sur leurs invariants ; 
en particulier, pour tout choix de repr\'esentation $\mathcal X$ comme dans le corollaire \ref{Eext},  $P^+(\L_\oE^\PM) \cap P^\PM$ agit sur $x_\ext^\PM$ par un caract\`ere 
 (trivial sur $P_1(\L_\oE^\PM) \cap P^\PM$).

 Donnons-nous alors   un prolongement $\kappa^\PM$ de $\eta^\PM$ 
\`a $J^1(\L^\PM)J^+_P(\L^\PM)$  et un prolongement 
$\kappa^\PM_\interieur$ de $\eta^\PM_\interieur$ \`a  $J_\interieur^1(\L^\PM)J^+_{\interieur,P}(\L^\PM)$.  Le lemme \ref{invariantsbis} 
permet d'en \'etudier les $J^1\cap U^\PM$-invariants ou $J^1_\interieur \cap U^\PM$-invariants, 
qui fournissent des repr\'esentations 
$\kappa_P$ de $J^+_P(\L^\PM)= (P^+(\L_\oE^\PM) \cap P^\PM)J^1_P(\L^\PM)$ et $\kappa_{\interieur,P^\PM}^\PM $ 
de $J^+_{\interieur,P}(\L^\PM)= (P^+(\L_\oE^\PM) \cap P^\PM)J^1_{\interieur,P}(\L^\PM)$.
(On pr\'ef\`ere la notation simplifi\'ee  $X_{\interieur,P}$ \`a la notation exacte 
$X_{\interieur,P_\interieur}$  avec $P_\interieur = P \cap G_\interieur$.) 
Rappelons d'autre part  (cf \ref{JP}) 
que   $J^+_{\interieur,P}(\L^\PM)/J^1_{\interieur,P}(\L^\PM)$ est canoniquement isomorphe \`a 
$ P^+(\L_\oE^\PM) \cap P^\PM/ P_1(\L_\oE^\PM) \cap P^\PM  $, ce qui permet de consid\'erer tout caract\`ere du second groupe comme un caract\`ere du premier. Avec ces conventions, on a en r\'esum\'e :

  \begin{Proposition}\label{eextU} 
Soit $\mathcal X$  une  repr\'esentation de $ \, P^+(\L^\PM_\oE) \, / \, P_1(\L^\PM_\oE)$ 
  entrela\c cant $ \ \eta_\ext^\PM    $. Notons    
 $\chi_U$ le caract\`ere de $P^+(\L_\oE^\PM) \cap P^\PM$ par lequel ce groupe agit  sur 
les $J_\ext^1(\L^\PM) \cap U^\PM$-invariants de $ \ \eta_\ext^\PM$. 
Soit $J^+_{\interieur,P}(\L^\PM)= (P^+(\L_\oE^\PM) \cap P^\PM)J^1_{\interieur,P}(\L^\PM)$ 
et soit $\kappa^\PM$ un prolongement  de $\eta^\PM$ 
\`a $J^1(\L^\PM)J^+_P(\L^\PM)$. Alors 
  $$ \ {\kappa^\PM_P}_{|J^+_{\interieur,P}(\L)}  =  \  \chi_U   \,     \Interieur_{\mathcal X }(\kappa)_P.$$ 
\end{Proposition}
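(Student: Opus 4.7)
Le plan consiste à exploiter la factorisation tensorielle de $\kappa^\PM$ fournie par la proposition \ref{operateurs} et le corollaire \ref{Eext}, combinée à la décomposition $J^1(\L^\PM) \cap U^\PM = (J^1_\ext(\L^\PM) \cap U^\PM)(J^1_\interieur(\L^\PM) \cap U^\PM)$ établie plus haut dans ce paragraphe, pour calculer directement les $J^1(\L^\PM) \cap U^\PM$-invariants de $\kappa^\PM$ et identifier le résultat via l'isomorphisme $\Phi$ de (\ref{xext}).

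La première étape est de décomposer $\kappa^\PM$ sur son groupe de définition $J^1(\L^\PM) J^+_P(\L^\PM) = (P^+(\L^\PM_\oE) \cap P^\PM) J^1(\L^\PM)$ en ses composantes extérieure et intérieure : pour $g = g_\ext g_\interieur \in J^1(\L^\PM)$, la formule (\ref{tensor}) donne $\eta^\PM(g) = \eta^\PM_\ext(g_\ext) \otimes \eta^\PM_\interieur(g_\interieur)$ ; pour $g \in P^+(\L^\PM_\oE) \cap P^\PM$, le corollaire \ref{Eext} donne $\kappa^\PM(g) = \mathcal X(g) \otimes \Interieur_{\mathcal X}(\kappa)(g)$. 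Le passage aux invariants exploite le fait que chacun des sous-groupes $J^1_\ext(\L^\PM) \cap U^\PM$ et $J^1_\interieur(\L^\PM) \cap U^\PM$ agit sur un unique facteur tensoriel, donnant la factorisation $X^{J^1(\L^\PM) \cap U^\PM} = X_\ext^{J^1_\ext(\L^\PM) \cap U^\PM} \otimes X_\interieur^{J^1_\interieur(\L^\PM) \cap U^\PM}$, identifiée à $X_\interieur^{J^1_\interieur(\L^\PM) \cap U^\PM}$ par $\Phi$.

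Il restera à calculer l'action de $J^+_{\interieur,P}(\L^\PM) = (P^+(\L_\oE^\PM) \cap P^\PM) J^1_{\interieur,P}(\L^\PM)$ sur ces invariants. Pour $g$ dans $P^+(\L_\oE^\PM) \cap P^\PM$, l'opérateur $\mathcal X(g)$ agit sur $x_\ext^\PM$ par le scalaire $\chi_U(g)$ par définition même de $\chi_U$, tandis que $\Interieur_{\mathcal X}(\kappa)(g)$ se restreint en $\Interieur_{\mathcal X}(\kappa)_P(g)$ sur le facteur intérieur ; pour $g \in J^1_{\interieur,P}(\L^\PM)$, le facteur extérieur est trivial sur $x_\ext^\PM$, en accord avec la convention d'extension de $\chi_U$ rappelée avant l'énoncé, et l'action coïncide avec $\eta^\PM_{\interieur,P}(g) = \Interieur_{\mathcal X}(\kappa)_P(g)$. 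La combinaison de ces deux cas sur le produit $(P^+(\L_\oE^\PM) \cap P^\PM) J^1_{\interieur,P}(\L^\PM)$ donne la formule annoncée.

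Le point à surveiller sera la compatibilité de $\Phi$ avec les actions mises en jeu, en particulier le fait que les sous-groupes $J^1_\ext(\L^\PM)$ et $J^1_\interieur(\L^\PM)$ se normalisent mutuellement et que leurs intersections avec $U^\PM$ interviennent séparément dans le calcul des invariants ; cette vérification, essentiellement formelle, suit le schéma déjà employé dans la démonstration de la proposition \ref{etamMP} et s'appuie sur les décompositions d'Iwahori de $J^1_\ext(\L^\PM)$ et $J^1_\interieur(\L^\PM)$ par rapport à $(M, P)$ rappelées juste avant l'énoncé.
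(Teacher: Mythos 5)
Votre démonstration est correcte et suit essentiellement la même voie que l'article, qui présente d'ailleurs cette proposition comme un résumé des constructions qui la précèdent immédiatement : factorisation tensorielle (\ref{tensor}) étendue à $P^+(\L_\oE)$ via le corollaire \ref{Eext}, factorisation des $J^1\cap U$-invariants à partir de $J^1\cap U = (J^1_\ext\cap U)(J^1_\interieur\cap U)$, unidimensionnalité de $X_\ext^{J^1_\ext\cap U}$ et action de $P^+(\L_\oE)\cap P$ sur $x_\ext$ par le caractère $\chi_U$, puis identification par $\Phi$. Les points que vous signalez à vérifier (compatibilité de $\Phi$ avec les actions, normalisation mutuelle de $J^1_\ext$ et $J^1_\interieur$) sont exactement ceux que l'article établit dans les alinéas précédant l'énoncé.
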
 
 
 \subsection{Une repr\'esentation \`a la Weil}\label{WeilRep}
 
 Il est une situation particuli\`ere dans laquelle on peut 
  \'elucider le caract\`ere  $\chi_U$ introduit en \ref{eextU}, en construisant une repr\'esentation  entrela\c cant $\eta_\ext$.
Soit donc $V = W^{(-1)} \oplus W^{(0)} \oplus  W^{(1)}$ une d\'ecomposition autoduale de $V$ subordonn\'ee \`a la strate 
$[\L,n,0,\b]$ et telle que $W^{(0)}$ soit somme de certains $V^i$ ; posons $Z_1 = W^{(-1)}  \oplus  W^{(1)}$, $Z_2 = W^{(0)}$. Soit $P$ le sous-groupe parabolique de $G^+$ stabilisant le drapeau $\{0\} \subset  W^{(-1)} \subset 
W^{(-1)} \oplus W^{(0)}
 \subset V$, 
 $U$ son radical unipotent,  $M$ le fixateur de la d\'ecomposition, $P^-$ et $U^-$ les oppos\'es de $P$ et $U$ par rapport \`a $M$.

Pour simplifier les notations, on raccourcit $J^1_\ext(\L^\PM)$, $H^1(\L^\PM)$ 
etc. en $J^1_\ext$, $H^1 $ etc.
Le caract\`ere $\theta^\PM$ de $H^1$, trivial sur $H^1 \cap U^\PM$ et $H^1 \cap U^\PMopp$,  se prolonge en un caract\`ere $\theta^\PM_{U^\PM}$ de $H^1 (J^1_\ext \cap U^\PM)$ trivial sur 
$J^1_\ext \cap U^\PM$. On peut alors r\'ealiser la repr\'esentation $\eta_\ext^\PM $ de $J^1_\ext$ comme l'action par translations \`a droite dans l'espace de fonctions 
\begin{equation}\label{espace}
 \calW = \{f : J^1_\ext \longrightarrow \mathbb C \  / \  f(ht) = \theta^\PM_{U^\PM}(h) f(t), \   h \in H^1 (J^1_\ext \cap U^\PM),  \  
t \in J^1_\ext \}, 
\end{equation}
isomorphe  par restriction   \`a l'espace des fonctions de $J^1_\ext \cap U^\PMopp / H^1 \cap U^\PMopp$ dans $\mathbb C$. 

On v\'erifie ais\'ement que pour $ g \in P^+(\L_\oE^\PM)\cap P^\PM$, l'op\'erateur $\Omega(g)$ d\'efini par 
$$
\Omega(g) (f) (x) = f(g^{-1} x g) , \ \  (f \in \calW,  \  x \in J^1_\ext,  \  g \in P^+(\L_\oE^\PM)\cap P^\PM ) 
 $$
conserve $\calW$.  Il existe (\S \ref{remarquables}) une repr\'esentation projective 
 $g \mapsto \Omega (g)$  de $P^+(\L_\oE^\PM)/P_1(\L_\oE^\PM)$ dans $\calW$ entrela\c cant 
$\eta_\ext^\PM$ et prolongeant cette  vraie repr\'esentation de $P^+(\L_\oE^\PM)\cap P^\PM$. 
La donn\'ee de $\Omega $ va nous permettre de construire  une vraie repr\'esentation par   l'\'el\'egante m\'ethode de 
Neuhauser  (\cite{N} ; voir aussi \cite{Sz}). On d\'efinit tout d'abord les sous-espaces $\calW^+$ et $\calW^-$ de $\calW$ form\'es des fonctions paires ($ \forall v \in J^1_\ext \cap U^\PMopp ,  f(v)=f(v^{-1})$) pour le premier, 
impaires ($\forall v \in J^1_\ext \cap U^\PMopp , f(v)=-f(v^{-1})$) pour le second.  
On pr\'etend que  
\begin{Lemma}
$ \ 
\calW^+  $  et  $ \  \calW^-  $  sont stables par les op\'erateurs $ \  \Omega(g)$,  $ g \in  P^+(\L_\oE^\PM)/P_1(\L_\oE^\PM)$. 
\end{Lemma}
\begin{proof}
L'\'el\'ement $ \epsilon : \epsilon_{|Z_1}= I_{Z_1}, \epsilon_{|Z_2}=- I_{Z_2} $,   
 appartient au centre de $P^+(\L_\oE^\PM) $. 
 Un bref calcul matriciel montre que, 
  pour tout $u \in  J^1_\ext \cap U^\PMopp$, 
   $ (\epsilon u \epsilon^{-1}) u$ appartient \`a $G_\interieur$  donc \`a son intersection 
  avec $J^1_\ext \cap U^\PMopp$, soit : 
  $ \ \epsilon u \epsilon^{-1} \in     (H^1  \cap U^\PMopp)u^{-1}$. 
  Les sous-espaces $\calW^+$ et $\calW^-$ sont donc les sous-espaces propres de $\Omega(\epsilon)$ pour les valeurs propres $1$ et $-1$ respectivement, d'o\`u le r\'esultat (certes 
il s'agit d'une repr\'esentation projective, cependant un isomorphisme $\Omega(g)$   ne peut \'echanger $\calW^+$ et $\calW^-$  dont les dimensions diff\`erent de $1$).  
\end{proof} 
   Soit $\Omega^+$ la restriction de $\Omega$ \`a $\calW^+$. D'apr\`es \cite[Theorem 4.3]{N},     o\`u  l'on examine  les d\'eterminants des repr\'esentations projectives $\Omega$ et $\Omega^+$,  on d\'efinit une vraie repr\'esentation         en posant :  
$$ 
\mathbb W (g) = \frac{\det \Omega(g)}{\det \Omega^+(g)^2} \  \  \Omega(g) , \  \  g \in  P^+(\L_\oE^\PM)/P_1(\L_\oE^\PM) . 
$$
Calculons     le caract\`ere 
$\mathbf w_U$  correspondant. 
Dans le mod\`ele ci-dessus, la droite des $J^1_\ext \cap U^\PM$-invariants de $\eta_\ext^\PM$ 
a pour base la fonction $f_1$ de support $H^1 (J^1_\ext \cap U^\PM)$ et valeur $1$ en $1$, 
  fix\'ee par les op\'erateurs $\Omega(u)$, $u \in P^+(\L_\oE^\PM)\cap P^\PM$. 
D'autre part $\calW$ a pour base l'ensemble des fonctions $f_v$, $v \in J^1_\ext \cap U^\PMopp / H^1_\ext \cap U^\PMopp$, d\'efinies par leur support $  \, H^1 (J^1_\ext \cap U^\PM) \, v $ et leur valeur $1$ en $v$. 
 \begin{Lemma}\label{tech1}
Soit $u \in P^+ (\L_\oE^\PM) \cap U^\PM $, $x \in J^1_\ext (\L^\PM)\cap U^\PM$ et $v \in  J^1_\ext (\L^\PM) \cap U^\PMopp$.  
Le commutateur $[u, x]$ appartient \`a $H^1(\L^\PM) \cap G_\interieur^+$ et 
le commutateur $[u, v]$ appartient \`a $H^1(\L^\PM)( J^1_\ext (\L^\PM) \cap U^\PM)$. 
\end{Lemma} 
\begin{proof} Le commutateur de deux \'el\'ements de $U$ 
appartient \`a $G_\interieur^+$ (v\'erification matricielle imm\'ediate) donc $[u, x]$ appartient \`a $J^1_\ext (\L^\PM)\cap U^\PM \cap G_\interieur^+ \subset H^1  (\L^\PM)\cap U^\PM \cap G_\interieur^+$.  
Le commutateur $[u, v]$ appartient \`a $J^1_\ext (\L^\PM)$, il suffit de montrer qu'il est ortho\-gonal \`a 
$J^1_\ext (\L^\PM)\cap U^\PM$. On calcule donc  
pour $x \in J^1_\ext (\L^\PM)\cap U^\PM$~: 
$$
\theta^\PM([uvu^{-1}v^{-1},x])=
\theta^\PM([uvu^{-1},x][v^{-1},x]) 
=\theta^\PM([v,u^{-1}xu]) \theta^\PM([v,x^{-1}]) 
=\theta^\PM([v, u^{-1}xux^{-1}]) 
$$
qui vaut $1$ 
puisque $[u^{-1},x]$ appartient \`a $H^1(\L^\PM)$. 
\end{proof}
On a donc pour  $u \in P^+(\L_\oE^\PM)\cap U^\PM$ :
$ \Omega(u) f_v = \theta^\PM_{U^\PM}([u^{-1}, v]) f_v$. En outre $\Omega(u)$ envoie fonction paire sur fonction paire, d'o\`u : 
$ \theta^\PM_{U^\PM}([u^{-1}, v]) =   \theta^\PM_{U^\PM}([u^{-1}, v^{-1}])$ pour $v \in J^1_\ext \cap U^\PMopp$. Il en r\'esulte   $ \det \Omega(u)= \det \Omega^+(u)^2 $, donc 
$\mathbb W (u)$ est \'egal \`a $ \Omega(u)$ et fixe $f_1$.

Etudions maintenant les op\'erateurs $\Omega(x)$ pour  $x $ \'el\'ement de $ P^+(\L_\oE^\PM)\cap M$, qui normalise $J^1_\ext \cap U^\PMopp$ et $H^1_\ext \cap U^\PMopp$. On  
  a $\Omega(x) f_v = f_{xvx^{-1}}$, autrement dit ces op\'erateurs agissent 
par permutation de la base des $f_v$ et leur d\'eterminant est la signature de la permutation correspondante. Il en est de m\^eme pour $\cal W^+$, de base les $f_v + f_{v^{-1}}$, donc 
  $\det \Omega^+(x)^2=1$. 
En conclusion : 

\begin{Proposition}\label{Weil}
Soit $\mathbf w_U$ le caract\`ere de $P^+(\L_\oE^\PM)\cap P$,  
 trivial sur  $P^+(\L_\oE^\PM)\cap U$, 
  qui \`a $x \in P^+(\L_\oE^\PM)\cap M $ 
associe la signature de la permutation  $v \mapsto xvx^{-1}$ de $J^1_\ext(\L^\PM) \cap U^\PMopp / H^1(\L^\PM) \cap U^\PMopp$. 
Il existe une 
  repr\'esentation $\mathbb W$ de $P^+(\L_\oE^\PM) / P_1(\L_\oE^\PM)$ dans l'espace   de $\eta_\ext^\PM$,  entrela\c cant   $\eta_\ext^\PM$, dont la   
	 restriction   \`a $P^+(\L_\oE^\PM)\cap P$ est donn\'ee, dans le mod\`ele (\ref{espace}) de 
	$\eta_\ext^\PM$, par : 
	$$
	\mathbb W (g) (f) (x) = \mathbf w_U (g) f(g^{-1}x g)  \quad (g \in P^+(\L_\oE^\PM)\cap P, 
	\  f \in \calW, \   x \in J^1_\ext). 
	$$ 
	 En particulier, le caract\`ere $\mathbf w_U$  est l'action de $P^+(\L_\oE^\PM)\cap P$ sur la droite des   $J^1_\ext \, \cap U^\PM$-invariants de $\eta_\ext^\PM$.  
\end{Proposition}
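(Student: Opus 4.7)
The plan is to verify that the representation $\mathbb W$ already constructed via Neuhauser's renormalization, $\mathbb W(g) = (\det \Omega(g)/\det \Omega^+(g)^2)\,\Omega(g)$, satisfies the explicit formula of the proposition on $P^+(\L_\oE) \cap P$, and then to read off the action on the $J^1_\ext \cap U$-invariant line as a corollary. Since $\mathbb W$ differs from $\Omega$ only by a scalar, the task reduces to computing $\det \Omega(g)/\det \Omega^+(g)^2$ for $g$ in $U \cap P^+(\L_\oE)$ and in $M \cap P^+(\L_\oE)$ separately, and showing that the result coincides with the proposed character $\mathbf w_U(g)$.

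For $u \in U \cap P^+(\L_\oE)$, Lemma \ref{tech1} shows that $\Omega(u)$ acts diagonally in the basis $\{f_v\}$ of $\mathcal W$ with eigenvalues $\theta_U([u^{-1}, v])$. The stability of $\mathcal W^+$ under $\Omega(u)$, established earlier via the centrality of $\epsilon$ in $P^+(\L_\oE)$, forces the symmetry $\theta_U([u^{-1}, v]) = \theta_U([u^{-1}, v^{-1}])$; the eigenvalues thus come in equal pairs, giving $\det \Omega(u) = \det \Omega^+(u)^2$. Hence $\mathbb W(u) = \Omega(u)$, in agreement with $\mathbf w_U(u) = 1$ (triviality on $U$). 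I expect this symmetry step to be the most delicate point of the proof, as it is the place where the $\epsilon$-splitting of $\mathcal W$ really enters.

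For $x \in M \cap P^+(\L_\oE)$, the element $x$ normalizes both $J^1_\ext \cap U^-$ and $H^1 \cap U^-$, so the model (\ref{espace}) immediately gives $\Omega(x) f_v = f_{xvx^{-1}}$; thus $\Omega(x)$ is a permutation matrix whose determinant is precisely the signature $\mathbf w_U(x)$. The same computation on $\mathcal W^+$, whose basis is naturally indexed by pairs $\{v, v^{-1}\}$, yields $\det \Omega^+(x) = \pm 1$, whence $\det \Omega^+(x)^2 = 1$ and $\mathbb W(x) = \mathbf w_U(x)\,\Omega(x)$. Patching the two cases via the Levi decomposition of $P^+(\L_\oE) \cap P$ produces the stated formula, and $\mathbf w_U$ is automatically a character because $\mathbb W$ is a true representation. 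Finally, specializing at $v = 1$ in both computations gives $\Omega(g) f_1 = f_1$ for every $g \in P^+(\L_\oE) \cap P$, so $\mathbb W(g) f_1 = \mathbf w_U(g) f_1$, which identifies $\mathbf w_U$ with the action on the one-dimensional space of $J^1_\ext \cap U$-invariants and concludes the proof.
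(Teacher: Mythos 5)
Your proposal is correct and follows essentially the same route as the paper: the diagonal action of $\Omega(u)$ on the basis $\{f_v\}$ via Lemme \ref{tech1} together with the parity symmetry $\theta^\PM_{U^\PM}([u^{-1},v])=\theta^\PM_{U^\PM}([u^{-1},v^{-1}])$ forcing $\det\Omega(u)=\det\Omega^+(u)^2$, then the permutation action of $\Omega(x)$ for $x\in P^+(\L_\oE^\PM)\cap M$ with $\det\Omega^+(x)^2=1$, and finally the evaluation at $f_1$. This matches the paper's argument step for step.
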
 

\begin{Remark}
{\rm 
On obtient un caract\`ere    $\mathbf w_U$   \`a valeurs dans $\{\pm 1\}$,  donc trivial sur tous les sous-groupes d'ordre 
impair, en particulier sur tous les $p$-sous-groupes.
Rappelons aussi que $J^1_\ext(\L^\PM)$   ne d\'epend pas de $\theta$ 
(Lemme \ref{intersection}), donc   
  $\mathbf w_U$ n'en d\'epend pas non plus. 
}
\end{Remark}

\begin{Remark} {\rm Au lieu d'\'etudier directement la repr\'esentation de $P^+(\L_\oE^\PM)/P_1(\L_\oE^\PM)$ comme ci-dessus, on aurait pu utiliser l'homomorphisme naturel de ce groupe dans le groupe symplectique 
$\cal S$ de $ J^1_\ext \cap U^\PMopp / H^1 \cap U^\PMopp \oplus J^1_\ext \cap U^\PM  / H^1 \cap U^\PM $ et se ramener \`a la repr\'esentation de Weil de $\cal S$   telle qu'elle est d\'ecrite par G\'erardin  \cite{G}. 
Cette approche s'av\`ere plus compliqu\'ee. D'une part il n'est  pas clair que l'action naturelle de $P^+(\L_\oE^\PM)/P_1(\L_\oE^\PM)$ sur le groupe d'Heisenberg 
correspondant nous donne une section conjugu\'ee \`a la section standard (de   $\cal S$ dans le groupe des automorphismes du groupe d'Heisenberg). 
D'autre part la connaissance du caract\`ere du stabilisateur de $J^1_\ext \cap U^\PM  / H^1_\ext \cap U^\PM $ dans $\cal S$ par lequel il agit sur un invariant de son radical unipotent 
ne suffit  pas \`a d\'ecrire sa restriction \`a  $P^+(\L_\oE^\PM) \cap M$. 
  }
\end{Remark}

\subsection{Lien avec les $\beta$-extensions}\label{2.4}

Nous allons \`a pr\'esent appliquer le corollaire \ref{Eext} pour   comparer $\beta$-extensions dans $G^+$ et dans $G_\interieur^+$. 
Les deux ingr\'edients n\'ecessaires sont la restriction de Jacquet, qui interviendra via la 
proposition \ref{eextU} et qui commute aux bijections canoniques (proposition \ref{diagcom2}), 
et un bon choix de la repr\'esentation $\mathcal X $ (proposition \ref{Weil}). 
Nous nous pla\c cons dans  des hypoth\`eses communes \`a ces r\'esultats : 
\begin{enumerate}
	\item 
$V = W^{(-1)} \oplus W^{(0)} \oplus  W^{(1)}$ est une d\'ecomposition autoduale de $V$
proprement subordonn\'ee \`a la strate 
$[\L,n,0,\b]$, donc  subordonn\'ee \`a  
$[\L^M,n^M,0,\b]$, et telle que $W^{(0)}$ soit somme de certains $V^i$  ;   $Z_1 =W^{(-1)}  \oplus  W^{(1)} $, $Z_2 = W^{(0)}$. 
 
La strate 
  $[\L,n,0,\b]$ 
est   donc somme directe des strates semi-simples  $[S_1 ,n_1,0,\b_1]$ et $[S_2 ,n_2,0,\b_2]$
 avec   $S_j= \L\cap Z_j $, $j=1,2$. Il en est de m\^eme pour la strate   $[\L^M,n^M,0,\b]$,  somme directe des strates semi-simples $[S_1^M ,n_1^M,0,\b_1]$ et $[S_2^M ,n_2^M,0,\b_2]$. 

 \item    $P$ est le sous-groupe parabolique de $G^+$ stabilisant le drapeau $\{0\} \subset  W^{(-1)} \subset 
W^{(-1)} \oplus W^{(0)}
 \subset W^{(-1)} \oplus W^{(0)} \oplus W^{(1)}$, 
 $U$ est  son radical unipotent,  $M$ est le fixateur de la d\'ecomposition, $P^-$ et $U^-$ 
 sont les oppos\'es de $P$ et $U$ par rapport \`a $M$. 
 \item On demande $P^+ (\L_\oE ) = (P^+ (\L_\oE^M) \cap P ) \ P_1(\L_\oE^M)$.
 \end{enumerate}  
 Rappelons que $\fb_0(\L^M)$ est un $\oE$-ordre autodual maximal. On applique les r\'esultats du paragraphe pr\'ec\'edent \`a $\L^M$  en ajoutant l'exposant $M$ aux notations si n\'ecessaire. Alors : 
 \begin{Proposition}\label{casmaximal} 
 Soit $\mathbb W^M$ 
  la repr\'esentation   de $P^+(\L_\oE^M) $ dans l'espace  de $\eta_\ext^M$   
d\'efinie dans la proposition   \ref{Weil} et soit $ \, \Interieur_{\mathbb W^M}$ la bijection correspondante.  Alors $\kappa^M$,  prolongement  de $\eta^M$ \`a $J^+(\L^M)$, est une $\beta$-extension si et seulement si 
$\Interieur_{\mathbb W^M} (\kappa^M)$, prolongement  de $\eta_\interieur^M$ \`a $J^+_\interieur (\L^M)$,  
est une $\beta$-extension. 
 \end{Proposition}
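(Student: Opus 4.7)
Le plan consiste à choisir une suite auxiliaire $\L^m$ de $\oE$-réseaux autoduale telle que $\fb_0(\L^m)$ soit un $\oE$-ordre autodual minimal contenu dans $\fa_0(\L^M)$, \emph{et} telle que la décomposition $V = W^{(-1)} \oplus W^{(0)} \oplus W^{(1)}$ soit proprement subordonnée à $\L^m$ (raffinement standard, cf.\ Lemme~2.8 de~\cite{S5}). Une telle $\L^m$ induit sur chaque $Z_j$ une suite $S_j^m$ dont $\fb_0(S_j^m)$ est minimal dans $\fa_0(S_j^M)$, ce qui détermine la notion de $\beta$-extension dans $G_\interieur^+ = G^+(Z_1) \times G^+(Z_2)$. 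Par Définition~\ref{betaextension}(1) et Proposition~\ref{etamM}, $\kappa^M$ est une $\beta$-extension de $\eta^M$ si et seulement si ${\kappa^M}_{|P_1(\L_\oE^m) J^1(\L^M)} = \eta(\L^m, \L^M)$, et de même $\Interieur_{\mathbb W^M}(\kappa^M)$ est une $\beta$-extension de $\eta_\interieur^M$ si et seulement si sa restriction à $P_1(\L_\oE^m) J^1_\interieur(\L^M)$ coïncide avec $\eta_\interieur(\L^m, \L^M) := \eta(S_1^m, S_1^M) \otimes \eta(S_2^m, S_2^M)$.

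Comme $P_1(\L_\oE^M) \subseteq P_1(\L_\oE^m) \subseteq P^+(\L_\oE^M)$, la bijection $\Interieur_{\mathbb W^M}$ (au sens étendu du Corollaire~\ref{Eext}) relie prolongements de $\eta^M$ à $P_1(\L_\oE^m) J^1(\L^M)$ et prolongements de $\eta_\interieur^M$ à $P_1(\L_\oE^m) J^1_\interieur(\L^M)$, et commute à la restriction depuis $J^+(\L^M)$. La proposition se ramène alors à l'identité clé
$$
\Interieur_{\mathbb W^M}\bigl(\eta(\L^m, \L^M)\bigr) \ = \ \eta_\interieur(\L^m, \L^M)
$$
comme représentations de $P_1(\L_\oE^m) J^1_\interieur(\L^M)$. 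Pour l'établir, on invoque la caractérisation par entrelacement du Corollaire~\ref{intertwiningeta} appliquée dans $G_\interieur^+$: $\eta_\interieur(\L^m, \L^M)$ y est l'unique prolongement de $\eta_\interieur^M$ entrelaçant $\eta_\interieur^m$ sur $J^1_\interieur(\L^m) \cap P_1(\L_\oE^m) J^1_\interieur(\L^M)$. On part alors de l'entrelacement analogue dans $G^+$ (même Corollaire), que l'on décompose en produit tensoriel via la factorisation $\eta = \eta_\ext \otimes \eta_\interieur$ (\ref{tensor}) et l'unicité à scalaire près des opérateurs d'entrelacement des parties ext et interior (Proposition~\ref{operateurs}(iii)); sa composante interior fournit l'entrelacement requis.

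L'obstacle technique principal consiste à contrôler précisément l'action des opérateurs $\mathbb W^M(g)$ pour $g \in P_1(\L_\oE^m)$ sur les $J^1_\ext \cap U$-invariants de $\eta_\ext^M$, afin d'en déduire la cohérence avec la restriction de $\eta(\L^m, \L^M)$. Heureusement, le caractère $\mathbf w_U^M$ de la Proposition~\ref{Weil} étant à valeurs dans $\{\pm 1\}$ et $P_1(\L_\oE^m)$ étant pro-$p$ avec $p$ impair, $\mathbf w_U^M$ est trivial sur $P_1(\L_\oE^m) \cap P$, ce qui simplifie considérablement la vérification. La Proposition~\ref{eextU} appliquée à $\eta(\L^m, \L^M)$ vu comme prolongement de $\eta^M$ à $P_1(\L_\oE^m) J^1(\L^M)$, combinée à la description explicite de $\eta_P(\L^m, \L^M)$ donnée par la Proposition~\ref{etamMP} (en particulier à la factorisation de sa restriction à $M$), permet alors d'identifier les deux membres de l'identité clé terme à terme et de conclure.
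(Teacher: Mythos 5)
Your overall skeleton coincides with the paper's: choose $\L^m$ minimal, autodual, with the decomposition $V = W^{(-1)}\oplus W^{(0)}\oplus W^{(1)}$ properly subordinate to $[\L^m,n^m,0,\b]$, reduce the question to the $J^1\cap U$-invariants, and conclude via la proposition \ref{eextU} together with the fact that $\mathbf w_U^M$, being a sign character, is trivial on the pro-$p$ group carrying $\eta_\interieur(\L^m,\L^M)_P$. Your last paragraph is essentially the paper's actual argument.

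The weak point is your middle paragraph, the proposed proof of the ``key identity'' $\Interieur_{\mathbb W^M}(\eta(\L^m,\L^M)) = \eta_\interieur(\L^m,\L^M)$ via le corollaire \ref{intertwiningeta} and a tensor factorisation of intertwining operators. Two problems. First, le corollaire \ref{intertwiningeta} is stated under the hypothesis $\fa_0(\L^m)\subseteq\fa_0(\L^M)$, whereas your choice of $\L^m$ only guarantees $\fb_0(\L^m)\subseteq\fb_0(\L^M)$; the inclusion of the full hereditary orders need not hold. Second, and more seriously, to ``extract the interior component'' of a nonzero element of $\Hom_{J^1(\L^m)\cap P_1(\L^m_\oE)J^1(\L^M)}(\eta^m,\eta(\L^m,\L^M))$ you would need the intersection group to factor as a product of its ext and interior parts \emph{compatibly for $\L^m$ and $\L^M$ simultaneously}; le lemme \ref{intersection}(ii) only provides such a factorisation for $J^1(\L^M)\cap gJ^1(\L^M)g^{-1}$ with $g\in G_E^+$, not for this mixed intersection, and the decompositions (\ref{tensor}) for $\L^m$ and for $\L^M$ are taken with respect to different orthogonals. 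As written this step does not go through. Fortunately it is also unnecessary: the paper never proves the key identity directly, but instead shows that both conditions (``$\kappa^M$ prolonge $\eta(\L^m,\L^M)$'' and ``$\Interieur_{\mathbb W^M}(\kappa^M)$ prolonge $\eta_\interieur(\L^m,\L^M)$'') are equivalent to conditions on the $U$-invariants read on $J^+_{\interieur,P}(\L^M)$, where the two sides differ exactly by $\mathbf w_U^M$ --- which is what your final paragraph does. You should also record explicitly that $\L^m$ can be chosen with $\fb_0(\L^m)\subset\fb_0(\L)$: the paper uses this to get $P_1(\L^m_\oE)\cap U^- = P_1(\L^M_\oE)\cap U^-\subset J^1(\L^M)$, which is what makes the Iwahori decomposition of $P^1(\L^m_\oE)J^1(\L^M)$ and the definition of $\eta_P(\L^m,\L^M)$ on the $J^1(\L^M)\cap U$-invariants legitimate (cf.\ le lemme \ref{invariantsbis} and les hypoth\`eses (\ref{hypotheses})).
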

 \begin{proof} 
 Il suffit de montrer, pour une  suite autoduale $\L^m$ de $\oE$-r\'eseaux 
 telle que  $\fb_0(\L^m)$ soit un $\oE$-ordre autodual minimal contenu dans 
  $\fb_0(\L^M)$, que les conditions suivantes sont \'equivalentes : 
\begin{enumerate}
	\item $\kappa^M$ est un prolongement de $\eta(\L^m, \L^M)$ ; 
	\item $\Interieur_{\mathbb W^M} (\kappa^M)$ est un prolongement de  $\eta_\interieur(\L^m, \L^M)$. 
\end{enumerate}
 On choisit, comme on le peut, une  suite  $\L^m$  
  telle que la d\'ecomposition 
	 $V = W^{(-1)} \oplus W^{(0)} \oplus  W^{(1)}$ soit proprement subordonn\'ee \`a la strate
	 $[\L^m,n^m,0,\b]$ et telle que  $\fb_0(\L^m) \subset \fb_0(\L )$. 
\begin{itemize}
	\item L'hypoth\`ese (iii) entra\^\i ne 
	$P^+ (\L_\oE ) = (P^1 (\L_\oE^M) \cap U^- ) (P^+ (\L_\oE^M) \cap M  )(P^+ (\L_\oE^M) \cap U  )$ donc $P_1(\L_\oE^m)\cap U^- \subset 
  P^1 (\L_\oE^M) \cap U^-$. Comme   
  $P^1 (\L_\oE^M)   \subset P^1 (\L_\oE^m)$, on a  en fait  
	$P_1(\L_\oE^m)\cap U^- =  P_1(\L_\oE)\cap U^- =P_1(\L_\oE^M)\cap U^-$, contenu dans $J^1(\L^\PM)$. 
	
	\item Rappelons que $\eta(\L^m, \L^M)$ est une repr\'esentation de $ P^1 (\L_\oE^m) J^1(\L^M)$,   poss\`edant une d\'ecomposition d'Iwahori relative \`a $(P,M)$ et 
	d'intersection avec $U^-$  :  $J^1(\L^M) \cap U^-$. On peut comme au paragraphe \ref{subord} d\'efinir la repr\'esentation $\eta_P(\L^m, \L^M)$ du groupe  $ (P^1 (\L_\oE^m)\cap P) J^1_P(\L^M)$
	: c'est  l'action sur les 
	$J^1(\L^M) \cap U $-invariants de $\eta(\L^m, \L^M)$. On v\'erifie que $\eta(\L^m, \L^M)$ 
	est la repr\'esentation induite de $\eta_P(\L^m, \L^M)$. 
	\item Le groupe $P^+(\L_{\oE}  ) J^1(\L^M)$ contient $P^+(\L_{\oE}^m ) J^1(\L^M)$, de sorte 
	que la condition voulue sur $\kappa^M$ peut se voir comme  une condition sur sa restriction \`a 
	$P^+(\L_{\oE}  ) J^1(\L^M)$, \`a laquelle  le lemme \ref{invariantsbis} s'applique. 
	Ainsi 
	$\kappa^M$ prolonge   $\eta(\L^m, \L^M)$ si et seulement si 
	$\kappa^M_P$ prolonge   $\eta_P(\L^m, \L^M)$. 
	\item Les m\^emes constructions s'appliquent dans  $G^+_\interieur $  aux objets induits dans les 
	groupes d'isom\'etries de $Z_1$ et $Z_2$ ; on continue d'utiliser la notation $P$ 
	au lieu d'utiliser une notation sp\'ecifique pour le parabolique  $P \cap G_\interieur^+$. 
	 Elles aboutissent \`a :
$\Interieur_{\mathbb W^M} (\kappa^M)$ prolonge $\eta_\interieur(\L^m, \L^M)$   si et seulement si 
	$\Interieur_{\mathbb W^M} (\kappa^M)_P$ prolonge   $\eta_\interieur(\L^m, \L^M)_P$. 
	\item On peut d\'ecrire $\eta_{P}(\L^m, \L^M)$ de fa\c con analogue   \`a la proposition \ref{etamMP} ci-dessus et \`a \cite{S5}, 
preuve de la proposition 6.3 : c'est la repr\'esentation de $ (P^1 (\L_\oE^m)\cap P) J^1_P(\L^M)$ prolongeant $\eta_{P}(  \L^M)$ qui est triviale sur $  P^1 (\L_\oE^m)\cap U$ 
et \'egale \`a  $\eta({\L^m}^{(0)}, {\L^M}^{(0)})  \otimes   
\widetilde \eta({\L^m}^{(1)}, {\L^M}^{(1)}) $ 
sur $  P^1 (\L_\oE^m)\cap M$. Il en est de m\^eme pour    $\eta_\interieur(\L^m, \L^M)_P$ avec les 
modifications \'evidentes. 
\end{itemize}
 Les conditions 	``$\kappa^M_P$ prolonge   $\eta_P(\L^m, \L^M)$'' et 
	``$\Interieur_{\mathbb W^M} (\kappa^M)_P$ prolonge   $\eta_\interieur(\L^m, \L^M)_P$'' 
  se lisent sur $J_{\interieur, P}^+(\L^M)$. Par 
		    la proposition \ref{eextU}, la restriction de $\kappa^M_P$ \`a $J_{\interieur, P}^+(\L^M)$ est \'equivalente \`a $\mathbf w^M_U \Interieur_{\mathbb W^M} (\kappa^M)_P$. 
Comme $\mathbf w_U^M $ est trivial sur le ($p$-)groupe de d\'efinition de $\eta_\interieur(\L^m, \L^M)_P$, 
 on a le r\'esultat annonc\'e. 
 \end{proof}
\begin{Remark}
{\rm 
On montre de la m\^eme fa\c con que  $\kappa^M$   prolonge  $\eta(\L, \L^M)$     si et seulement si 
$\Interieur_\mathbb W (\kappa^M)$  prolonge   $\eta_\interieur(\L, \L^M)$.
}
\end{Remark}

Nous pouvons enfin rassembler tous les r\'esultats et d\'ecrire le lien entre 
  $\b$-extensions dans $G^+$ et dans $G_\interieur^+$    en termes de l'application
  $\mathbf r_{P}$ du paragraphe \ref{Jacquet} et du caract\`ere $\mathbf w_U^M$ de la proposition  \ref{Weil}  appliqu\'ee \`a la suite $\L^M$. Le caract\`ere $\mathbf w_U^M$   de  $P^+(\L_\oE^M )\cap P/ P^1(\L_\oE^M )\cap P$ s'identifie \`a un  caract\`ere de $P^+ (\L_\oE ) = (P^+ (\L_\oE^M) \cap P ) \ P_1(\L_\oE^M)$, trivial sur  $P^1 (\L_\oE )  $. 
  On rappelle que $P^+ (\L_\oE )= (P^1 (\L_\oE ) \cap U^-) (P^+ (\L_\oE ) \cap M)(P^1 (\L_\oE )\cap U)$. 
 
 \begin{Theorem}\label{theoreme}
  Soit $ \kappa(\L)$ un prolongement de $ \eta(\L)$ \`a $J^+(\L)$ et 
 $\kappa (S_1)\otimes \kappa(S_2)$ un prolongement de   $\eta (S_1)\otimes \eta(S_2)$ \`a 
 $J^+_\interieur(\L)= J^+(S_1) \times J^+(S_2)$, reli\'es par la condition 
   $$\mathbf r_{P}\left(\kappa (\L)\right)= \mathbf r_{P} \left(
 \kappa (S_1)\otimes \kappa (S_2)\right).$$ 
 Soit $\mathbf w_U^M$ le caract\`ere de $P^+(\L_\oE )$, trivial sur $P^1(\L_\oE )$,     
  qui \`a $x \in P^+(\L_\oE  )\cap M $ 
associe la signature de la permutation  $v \mapsto xvx^{-1}$ de $J^1_\ext(\L^M) \cap U^- / H^1(\L^M) \cap U^-$. 

\noindent
 La repr\'esentation $\kappa(\L)$ est une $\b$-extension de 
 $ \eta(\L)$ relativement \`a $\L^M$   si et seulement si la  repr\'esentation $ \mathbf w_U^M (\kappa (S_1)\otimes \kappa(S_2))$ est une $\b$-extension de 
 $\eta (S_1)\otimes \eta(S_2)$ relativement \`a $\L^M$. 
 \end{Theorem}
 \begin{proof} Elle s'appuie sur le diagramme suivant : 
   $$
 \aligned 
  \{\text{Prolongements de } \eta(\L) \text{ \`a } J^+(\L)\}   
  &\stackrel{\mathfrak B }{\longleftrightarrow}    
  \{\text{Pr. de } \eta(\L,\L^M) \text{ \`a }  P^+(\L_{\oE} ) J^1(\L^M)\}
  \\
 \mathbf r_P\downarrow \qquad \qquad \qquad \qquad 
 &\qquad \qquad \qquad    \mathbf r_{P} \downarrow 
  \\
    \{\text{Pr. de } {\eta_P(\L)}_{| J^1(\L ) \cap M} \text{ \`a } J^+(\L ) \cap M\}  
  &\stackrel{\mathfrak B}{\longleftrightarrow}   
  \{\text{Pr. de } {\eta_{P|M}(\L , \L^M)} \text{ \`a } J^+(\L^M)   \cap \!  M\}  
  \\
   \ \downarrow \qquad \qquad \qquad \qquad 
 &\qquad \qquad \qquad     \Phi \  \downarrow 
   \\
    \{\text{Pr. de } {\eta_P(S_1)}_{| J^1(S_1 )\cap M} \otimes \eta(S_2) \text{ \`a } J^+_\interieur(\L ) \cap M\}  
  &\stackrel{\mathfrak B}{\longleftrightarrow}   
  \{\text{Pr. de }\eta_{P|M}(S_1,S_1^M)\otimes \eta(S_2, S_2^M)   \\
&\qquad \qquad \qquad \qquad \qquad \text{\`a } J^+_\interieur(\L^M)   \cap \!  M\} 
  \\
 \mathbf r_P\uparrow \qquad \qquad \qquad \qquad 
 &\qquad \qquad \qquad     \mathbf r_{P} \uparrow 
 \\
    \{\text{Pr. de } \eta(S_1)\otimes \eta(S_2) \text{ \`a } J^+_\interieur(\L)\}   
  &\stackrel{\mathfrak B }{\longleftrightarrow}    
  \{\text{Pr. de } \eta(S_1, S_1^M)\otimes \eta(S_2,S_2^M) 
  \\
&\qquad \qquad \qquad \qquad \qquad  \text{\`a }  P^+(\L_{\oE} ) J^1_\interieur(\L^M)\}
  \endaligned
 $$
 Expliquons ce diagramme. Les deux premi\`eres lignes reproduisent le diagramme commutatif de  la proposition  \ref{diagcom2} appliqu\'ee \`a $\L$ et $\L^M$, avec \'echange des lignes et des colonnes. 
 Les deux derni\`eres lignes reproduisent le m\^eme diagramme commutatif dans le groupe $G_\interieur ^+$, avec les paires $(S_1, S_1^M)$ et $(S_2, S_2^M)$, et invers\'e ; noter que 
 $\eta(S_2, S_2^M)= \eta_{P|M}(S_2, S_2^M)  $.

 Soit de nouveau $\mathbb W^M$ 
  la repr\'esentation   de $P^+(\L_\oE^M) $ dans l'espace  de $\eta_\ext^M$   
d\'efinie dans la proposition   \ref{Weil}, soit $ \, \Interieur_{\mathbb W^M}$ la bijection correspondante et soit $\Phi$ l'isomorphisme associ\'e par la proposition \ref{eextU}. Alors 
$\Phi$ entrelace des repr\'esentations de $J^+_{\interieur,P}(\L^M)$,  en particulier  
 de 
$J^+_{\interieur,P}(\L^M) \cap M = J^+_\interieur(\L^M)   \cap   M = J^+(\L^M ) \cap M 
= (J^+(S_1^M) \cap M) \times J^+(S_2^M)  $ (cf  (\ref{JP})),  et $\Phi$ 
est le produit tensoriel par $(\mathbf w_U^M)^{-1}= \mathbf w_U^M$. La compos\'ee de haut en bas, soit  $\mathbf r_P^{-1} \circ \Phi \circ \mathbf r_P$, des applications verticales de droite du diagramme   est la bijection $ \, \Interieur_{\mathbb W^M}$.

Partons alors de $ \kappa(\L)$, prolongement de $ \eta(\L)$ \`a $J^+(\L)$. C'est une $\b$-extension relativement \`a $\L^M$ si et seulement si son image 
$\mathfrak B (\kappa(\L))$, prolongement de $\eta(\L,\L^M) $ \`a $ P^+(\L_{\oE} ) J^1(\L^M)$, 
se prolonge \`a $J^+(\L^M)$ en une $\b$-extension de $ \eta(\L^M)$. Puisque $\mathbb W^M$ 
  est une  repr\'esentation   de $P^+(\L_\oE^M) $ tout entier, cette condition \'equivaut 
  \`a :  $ \, \Interieur_{\mathbb W^M}(\mathfrak B (\kappa(\L)))$ se prolonge \`a $J^+_\interieur(\L^M)$ en une $\b$-extension de $ \eta_\interieur(\L^M)$ (proposition \ref{casmaximal}). Ceci signifie exactement que :
  
  {\it $ \mathfrak B^{-1}[ \, \Interieur_{\mathbb W^M}(\mathfrak B (\kappa(\L)))]$ est une $\b$-extension de 
  $ \eta(S_1)\otimes \eta(S_2)$ relativement \`a $\L^M$. }
  
  Il reste \`a faire le lien avec les applications $\mathbf r_P$ de la colonne de gauche.  L'application faisant commuter le diagramme du milieu est 
  encore le produit tensoriel par $ \mathbf w_U^M $ puisque la bijection canonique est compatible \`a la torsion par les caract\`eres de $ P^+(\L_{\oE} )/   P^1(\L_{\oE} )$ 
  (lemme \ref{Faits}). Autrement dit : 
  
  $  \mathbf r_P (\kappa(\L)) = \mathbf w_U^M \ \mathbf r_P \left( \mathfrak B^{-1}[ \, \Interieur_{\mathbb W^M}(\mathfrak B (\kappa(\L)))]\right)= \mathbf r_P\left(  \mathbf w_U^M \, \mathfrak B^{-1}[ \, \Interieur_{\mathbb W^M}(\mathfrak B (\kappa(\L)))]\right)  .$
 \end{proof} 
 
 \begin{Example}\label{exemplebeta}
 {\rm
 Pla\c cons-nous dans un groupe symplectique avec $\dim W^{(1)} = 1$, de sorte que 
 $G \simeq \Sp(2N+2, F)$ et $G_\interieur \simeq \Sp( 2, F)\times \Sp(2N , F)$. Prenons dans 
 $Z_1$ la strate nulle $[S_1 ,0,0,0]$  attach\'ee \`a une suite de r\'eseaux dont le fixateur est le sous-groupe d'Iwahori standard $I$ de  $\Sp( 2, F)$, dans $Z_2$ une strate semi-simple sans composante nulle  $[S_2 ,n_2,0,\b_2]$. Alors  $\eta(S_1)$ est la repr\'esentation triviale de $J^1(S_1)$,   radical pro-unipotent de $J(S_1)=I$. Un prolongement  $\kappa (S_1)$ de  $\eta(S_1)$ \`a 
 $I$ est un caract\`ere  $\Xi$  de $I$ de la forme  $\Xi\left(\begin{smallmatrix} a & b \cr c & d \end{smallmatrix}\right) = \xi(a)$ pour  un caract\`ere $\xi$ de $\oFx$ trivial sur $ 1 + \pF$. Un tel caract\`ere est
 une $\b$-extension relativement \`a une des deux suites de r\'eseaux minimales 
 $S_1^+$ et $S_1^-$ 
 extraites de $S_1$ si et seulement si $\kappa (S_1)$ est le caract\`ere trivial de $I$ 
(Corollaire \ref{intertwiningeta}, Proposition \ref{intertwining}).

 Soient $\xi $ et $\Xi$ comme ci-dessus, soit $\kappa (S_2)$   un prolongement de 
$\eta (S_2)$ \`a $J(S_2)$ et 
soit $\kappa_P(\L) $ le prolongement  de $\eta_P(\L)$ \`a $J_P(\L)$   dont la restriction \`a  $J_P(\L)\cap M$ 
est $\xi \otimes \kappa (S_2)$. Le th\'eor\`eme 
\ref{theoreme} dit que $\Ind_{J_P(\L)}^{J (\L)} \kappa_P(\L) $ est une $\b$-extension relativement \`a la suite de r\'eseaux   $\L^M = S_1^+ \perp S_2$ si et seulement si 
$ \mathbf w_U^M (\Xi \otimes \kappa(S_2))$ est une $\b$-extension de 
 $1_{J^1(S_1)}\otimes \eta(S_2)$ relati\-vement \`a $\L^M$. D'apr\`es nos hypoth\`eses  $\fb_0(S_2)$ est un  ordre autodual maximal, de sorte que $\mathbf w_U^M  \kappa(S_2) $ est une $\b$-extension si et seulement si $\kappa(S_2) $ en est une (D\'efinition \ref{betaextension} ; $\mathbf w_U^M $ 
 est trivial sur les sous-groupes unipotents). Il reste : 
 
  {\it $\Ind_{J_P(\L)}^{J (\L)} \kappa_P(\L) $ est une $\b$-extension relativement \`a   $\L^M  $ si et seulement si 
$  \Xi = \mathbf w_U^M   $. }

La d\'etermination de $\mathbf w_U^M   $ n\'ecessite d'expliciter le quotient  $J^1_\ext(\L^M) \cap U^- / H^1(\L^M) \cap U^-$. A titre d'exemple, supposons que l'\'el\'ement $\b_2$ engendre une extension totalement ramifi\'ee de degr\'e $2N$ dans laquelle il a pour valuation 
$-(2Nk +1)$, $k \in \mathbb N$~; il est en particulier minimal. On obtient 
 $J^1_\ext(\L^M) \cap U^- $ sous la forme de  matrices par blocs 
 $\left(\begin{smallmatrix} 1 & 0& 0 \cr Y & I & 0 \cr z & X & 1\end{smallmatrix}\right)$ appartenant \`a $\Sp(2N+2, F)$ avec $z \in \pF$ et $X \in \left( \pF^{k+1}, \cdots, \pF^{k+1}, \pF^{k}, \cdots, \pF^{k}\right)$ ($N$ fois $ \pF^{k+1}$ et $N$ fois $\pF^{k}$). La description de  $ H^1(\L^M) \cap U^-$ 
 est analogue avec cette fois $N+1$ fois $ \pF^{k+1}$ et $N-1$ fois $\pF^{k}$, de sorte que le quotient cherch\'e est isomorphe \`a $\oF/ \pF$. Pour $x \in \oFx$, l'action de $\iota(x)
 \in P^+(\L_\oE  )\cap M $ est la multiplication par $x^{-1}$ dans $\oF/ \pF$, la signature de cette permutation est l'unique caract\`ere d'ordre $2$ de $k_F^\times$. En particulier, 
  {\it $\Ind_{J_P(\L)}^{J (\L)} \kappa_P(\L) $ est une $\b$-extension relativement \`a   $\L^M  $ si et seulement si 
$  \xi^2=1 $ et $\xi \ne 1$. }
 
 Le cas $\L^M = S_1^- \perp S_2$ est semblable.
 }
 \end{Example}
 
\setcounter{section}{2}

\Section{Application \`a la r\'eductibilit\'e}\label{S5}

 \subsection{Une paire couvrante et son alg\`ebre de Hecke}\label{pc} 
 
Soit $[\L ,n ,0,\b ]$ une strate semi-simple gauche et    $V = W^{(-1)} \oplus W^{(0)}\oplus W^{(1)} $ une d\'ecomposition autoduale de $V$ telle que $W^{(0)}$ contienne  tous les $V^i$ sauf un ;  
quitte \`a r\'eordonner les $V^i$ on peut donc supposer $W^{(0)} = V^1\cap W^{(0)} \perp V^2 \perp \cdots \perp V^{l}$ et 
$\left( W^{(-1)}  \oplus W^{(1)}\right) \perp  V^1\cap W^{(0)} = V^1$. 
On demande que cette d\'ecomposition soit 
{\it exactement subordonn\'ee} \`a la strate $[\L ,n ,0,\b ]$, c'est-\`a-dire     proprement subordonn\'ee et telle que   $\mathfrak b_0(\L^{(0)})$ soit un $\oE$-ordre autodual maximal de 
$\End_F(W^{(0)})\cap B$ et $\mathfrak b_0(\L^{(1)})$   un ${\mathfrak o}_{E_1}$-ordre maximal de 
$\End_{E_1}(W^{(1)}) $  \cite[Definition 6.5]{S5}.   Soit 
  $e_1$ la p\'eriode sur $E_1$ de la suite de r\'eseaux $\L^1$ ;   
quitte \`a \'echanger $W^{(-1)}$  et $ W^{(1)}$, on suppose  comme dans \cite[\S 6.2]{S5}  que l'unique entier $q_1$ v\'erifiant $-\frac{e_1}{2} < q_1 \le \frac{e_1}{2} $ et 
$\L^{(1)}(q_1 +1) \subsetneq \L^{(1)}(q_1  )$ est positif ou nul ; comme la d\'ecomposition est proprement subordonn\'ee on a de fait $q_1 > 0$.

Soit $P$   le sous-groupe parabolique de $G $ stabilisant le drapeau $ \   W^{(-1)} \subset 
W^{(-1)} \oplus W^{(0)}
 \subset V$, 
 $U$    son radical unipotent,  $M$  le fixateur de la d\'ecomposition autoduale, $P^-$ et $U^-$ 
   les oppos\'es de $P$ et $U$ par rapport \`a $M$.
La d\'ecomposition $V = W^{(-1)} \oplus W^{(0)}\oplus W^{(1)} $  d\'etermine des injections des  alg\`ebres $\End_F(W^{(i)})$, $i=0, 1, -1$, dans $\End_F(V)$, obtenues par un prolongement nul sur les deux autres sous-espaces. 
L'involution adjointe $g \mapsto \bar g$ sur 
 $\End_F(V)$ se restreint \`a $\End_F(W^{(0)})$ en une involution not\'ee de la m\^eme fa\c con, 
 et induit un isomorphisme not\'e  $x \mapsto \hat x$ de $\End_F(W^{(1)})$ sur  $\End_F(W^{(-1)})$.   
On identifie le groupe $\GL_F(W^{(1)})= \widetilde G^{(1)}$ \`a un sous-groupe de $G $ au moyen du monomorphisme 
$\iota$ qui \`a $g \in \widetilde G^{(1)}$ associe 
$\iota(g)= (\hat g^{-1}, I_{W^{(0)}} ,  g  )  \in  G  \cap M$.  
On note $G^{(0)} = G \cap \GL_F(W^{(0)})$ ; ainsi $M \simeq G^{(0)} \times \iota(\widetilde G^{(1)})$. 

Soit $E_1^0$ le corps des points fixes de l'involution adjointe dans $E_1$ et $e_0$   l'indice de ramification de $E_1$ sur $E_1^0$ ; 
 fixons une uniformisante   $\varpi_1$ de $E_1$ telle que 
$\bar \varpi_1 = (-1)^{e_0-1} \varpi_1$. 
  Notons enfin $f_1$ la 
 $E_1/E_1^0$-forme $\epsilon$-hermitienne non d\'eg\'en\'er\'ee sur $V^1$ associ\'ee \`a la restriction de $h$. 
 Suivant toujours \cite[\S 6.2]{S5} on choisit une base ordonn\'ee $\mathcal B^{(1)}= \{v_1, \cdots, v_d\}$ du r\'eseau 
$\L^{(1)}(0)$ sur ${\mathfrak o}_{E_1}$. Les \'el\'ements $v_{-i}$, $1 \le i \le d$, 
d\'efinis par 
$ 
f_1(v_j, v_{-i}) = \varpi_1 \delta_{ij} 
$ ($1 \le j \le d$)  constituent une base ordonn\'ee $\mathcal B^{(-1)}= \{v_{-1}, \cdots, v_{-d}\}$ du r\'eseau 
$\L^{(-1)}(1)$ sur ${\mathfrak o}_{E_1}$. On 
d\'efinit  les \'el\'ements $I_{1, -1}$, $I_{-1,  1}$ de $B_1$ et  $s_1$ et 
$s_1^\varpi$ de $G_E^+$  comme suit : 
$$
\aligned 
 &I_{-1,  1}\, (v_i)   = v_{-i} \ \text{ et } \  I_{-1,  1}\, (v_{-i}) = 0 \ \text{ pour } 1 \le i \le d \  ; 
 \\  
  &I_{ 1,  -1}\, (v_{-i})   = v_i \  \text{ et } \  I_{ 1,  -1}\, (v_{ i}) = 0 \  \text{ pour } 1 \le i \le d  \  ; 
  \\ 
  &s_1  = I_{-1,  1} + \epsilon (-1)^{e^0-1} \,  I_{ 1,  -1} + I_{W^{(0)}} \   ;  \quad   
 s_1^\varpi  = \varpi_1^{-1} I_{-1,  1} + \epsilon \varpi_1 \, I_{ 1,  -1} + I_{W^{(0)}}. 
\endaligned 
$$
Aucun des \'el\'ements $s_1$ et $ s_1^\varpi$ n'appartient \`a 
 $P^+(\L_\oE)$ 
 car la d\'ecomposition est exactement subordonn\'ee \cite[Definition 6.8, Lemma 6.7]{S5}, 
 tandis que pour toute suite autoduale $\L^M$ de $\oE$-r\'eseaux telle que $
 \mathfrak b_0(\L^M)$ soit un $\oE$-ordre autodual maximal contenant $\mathfrak b_0(\L)$, 
 l'un de ces deux \'el\'ements appartient \`a   $P^+(\L^M_\oE)$  \cite[fin du \S 6.2] {S5}.    
Remarquons que  
$$s_1^2 = \iota\left( \epsilon (-1)^{e^0-1} I_{W^{(1)}}\right), \  
 (s_1^\varpi)^2=  \iota\left( \epsilon   I_{W^{(1)}} \right), \  s_1^\varpi s_1  = \iota\left(\epsilon \varpi_1\right) , \ 
 s_1 s_1^\varpi = \iota\left( \epsilon (-1)^{e^0-1} \varpi_1^{-1} \right) .$$
On obtient un  automorphisme involutif $\sigma_1$ de $\widetilde G^{(1)}$ en posant :
$ \  
\iota(\sigma_1(g))= s_1 \iota(g) s_1^{-1}. 
$ 
  \begin{Lemma}[\bf  \cite{S5}   Lemma 7.11, Corollary 7.12] \label{s1pi}
  Soit $J_P = J_P  (\L)= H^1 (\L) (J  (\L)\cap P)$. 
Posons $ \ \zeta =\iota\left(   \varpi_1^{-1} \right)$ et $\zeta^\prime = \iota\left(   \varpi_F^{-1} \right)$ o\`u $\varpi_F$ est une uniformisante de $F$. Alors 
$\zeta^\prime$ est un \'el\'ement fortement positif 
ou fortement n\'egatif du centre de $M$ et on~a  
$$
J_P s_1 J_P  s_1^\varpi J_P  = J_P  \zeta J_P  \quad \text{ et } 
\quad 
(J_P \zeta J_P )^{e(E_1/F)} = J_P   \zeta^{e(E_1/F)} J_P  = 
J_P  \zeta^\prime J_P  . 
$$
\noindent
Les m\^emes r\'esultats sont valides en rempla\c cant $J_P  (\L)$ par $J_{P^-}  (\L)$.  
 \end{Lemma}

\begin{proof} On reprend la d\'emonstration de   \cite[Lemma 7.11]{S5} en notant 
$J^1 = J^1(\L)$ etc. Le 
   changement de $J_P^0$ en $J_P $ ou $J_P^+$ n'affecte que l'intersection avec $M$, normalis\'ee par 
  $s_1$ et $s_1^\varpi$,  
  la conjugaison par $s_1$ ou $s_1^\varpi$ \'echange $U $ et $U^-$, et il suffit en fait de travailler dans $\widetilde G$.  Notons $e^{(i)}$ la projection de $V$ sur $W^{(i)}$ de noyau la somme des $W^{(j)}$ pour $j \ne i$ et 
d\'etaillons les calculs de {\it loc. cit.}. On \'ecrit $\tilde J^1 \cap U^- = 1 +e^{(1)}\mathfrak J e^{(0)}+ e^{(1)}\mathfrak J e^{(-1)}+ e^{(0)}\mathfrak J e^{(-1)}$ d'o\`u 
$$
s_1(\tilde J^1 \cap U^-) s_1^{-1}  = 1 + I_{-1,  1}e^{( 1)}\mathfrak J e^{(0)} + I_{-1,  1}e^{( 1)}\mathfrak J I_{-1,  1} + e^{(0)} \mathfrak J I_{-1,  1}e^{( 1)} .
$$ 
Comme $e^{(i)}$ appartient \`a $\mathfrak b_0(\L)$,  $I_{-1,  1}e^{( 1)}$ appartient \`a 
$\mathfrak b_0(\L)$ 
et $\mathfrak b_1(\L) \mathfrak J \subset \mathfrak H$, on en d\'eduit 
$$
s_1(\tilde J^1 \cap U^-) s_1^{-1} \subset \tilde H^1 \cap U    $$
assertion un peu  plus forte que {\it loc. cit.}(ii).  
On a de m\^eme 
$$
(s_1^\varpi)^{-1}(\tilde J^1 \cap U ) s_1^\varpi \subset \tilde H^1 \cap U^- ,  
$$
qui est  l'assertion (iii) de  {\it loc. cit.}.  Ces  propri\'et\'es  suffisent \`a conclure puisque 
les groupes $J_{P } $ et $J_{P^-} $  sont  les groupes 
$(H^1 \cap U^-) (J  \cap M) (J^1 \cap U)$ et 
$(J^1 \cap U^-) (J  \cap M) (H^1 \cap U)$. 
\end{proof}

Partons d'un caract\`ere semi-simple $\theta$ de $H^1(\b, \L)$ et r\'esumons, d'apr\`es \cite[\S 5.3]{S5}, les propri\'et\'es  
 des objets associ\'es que voici : 
\begin{itemize}
	\item 
son prolongement $\theta_P$ \`a $H^1_P(\L)= H^1 (\L) (J^1 (\L)\cap U)$ trivial sur 
$J^1 (\L)\cap U$ ; 
\item l'unique repr\'esentation irr\'eductible $\eta_P$ de  $J^1_P(\L)= H^1 (\L) (J^1 (\L)\cap P)$ contenant $\theta_P$ ; 
\item  l'unique repr\'esentation irr\'eductible $\eta$ de 
$J^1 (\L)$ contenant $\theta$, qui v\'erifie  $\eta \simeq \Ind_{J^1_P(\L)}^{J^1 (\L)} \eta_P$~; 
\item un prolongement $\kappa$ de $\eta$ \`a $J^+ (\L)$ ; 
\item la repr\'esentation $\kappa_P$ de   $J^+_P(\L)= H^1 (\L) (J^+ (\L)\cap P)$ obtenue par restriction de   $\kappa$ \`a  l'espace des $J^1 (\L)\cap U$-invariants de $\eta$, qui prolonge   $\eta_P$ et v\'erifie $\kappa \simeq \Ind_{J^+_P(\L)}^{J^+ (\L)} \kappa_P$.  
\end{itemize}
  Les groupes consid\'er\'es ont tous une d\'ecomposition d'Iwahori relative \`a $(M,P)$, les repr\'esen\-ta\-tions $\theta_P$, $\eta_P$ et $\kappa_P$ sont triviales sur $H^1 (\L)\cap U^-$ 
et $J^1 (\L)\cap U= J^+ (\L)\cap U$ et leur composante en $M$ v\'erifie : 
$$
\aligned 
&{\theta_P}_{|H^1 (\L)\cap M} \ = \theta_{|H^1 (\L)\cap M} \ = \ \theta^{(0)} \otimes \tilde \theta^{(1)} ; 
\\
&{\eta_P}_{|J^1 (\L)\cap M} \ \simeq \ \eta^{(0)} \otimes \tilde \eta^{(1)} ; 
\\
&{\kappa_P}_{|J^+ (\L)\cap M} \ \simeq \ \kappa^{(0)} \otimes \tilde \kappa^{(1)} ;
\endaligned 
$$
o\`u  $\theta^{(0)}$ est un caract\`ere semi-simple gauche de 
$H^1 (\b^{(0)}, \L^{(0)} )$, $\eta^{(0)}$ est l'unique repr\'esentation irr\'eductible   de 
$J^1 (\b^{(0)}, \L^{(0)} )$ contenant $\theta^{(0)} $, $\kappa^{(0)}$ 
est un prolongement de $\eta^{(0)}$ \`a $J^+ (\b^{(0)}, \L^{(0)} )$~; 
  $\tilde \theta^{(1)}$ est un ca\-rac\-t\`ere semi-simple de $\widetilde H^1 (\b^{(1)}, \L^{(1)} )= \widetilde H^1 (2\b^{(1)}, \L^{(1)} )$  attach\'e \`a la strate 
$[\L^{(1)} ,n^{(1)} ,0,2 \b^{(1)} ]$, $\tilde \eta^{(1)}$ est l'unique repr\'esentation irr\'eductible   de   $\widetilde J^1 (\b^{(1)}, \L^{(1)} )$ contenant $\tilde \theta^{(1)}$
et $\tilde \kappa^{(1)}$ est un prolongement de  $\tilde \eta^{(1)}$  \`a $\widetilde J  (\b^{(1)}, \L^{(1)} )$. En outre : 
\begin{Lemma}[\bf  \cite{S5} Proposition 6.3, Lemma 6.9, Corollary 6.10]\label{equiv} 
L'involution $\sigma_1$ stabilise le 
  groupe $\widetilde J (\b^{(1)}, \L^{(1)} )$.  
De plus, si $\kappa$ est une $\beta$-extension de $\eta$, alors $\kappa^{(0)}$ est une $\b^{(0)}$-extension,  $\tilde \kappa^{(1)}$ est une 
 $2 \b^{(1)}$-extension \'equivalente \`a $\tilde \kappa^{(1)}\circ \sigma_1$ et 
$s_1$ et $s_1^\varpi$ entrelacent $\kappa_P$.  
\end{Lemma}

 Les groupes $J (\L)/  J^1(\L)$ et $J_P(\L)/  J^1_P(\L)$ sont  isomorphes \`a 
 $P (\L_\oE) / P^1(\L_\oE)$, lui-m\^eme isomorphe \`a $P (\L_\oE^{(0)}) / P^1(\L_\oE^{(0)}) \times \iota(\widetilde P  (L_\oE^{(1)}) / \widetilde P^1(\L_\oE^{(1)}))$ \cite[Lemma 5.3]{S5}. 
 Donnons-nous une repr\'esentation cus\-pi\-dale irr\'eductible $\rho$ de ces groupes, 
 que l'on peut \'ecrire sous la forme 
    $\rho^{(0)} \otimes \iota(\widetilde\rho ^{(1)})$ moyennant les identifications 
    ci-dessus, et  
  consid\'erons les repr\'esentations $\lambda_P$ de $J_P (\L)$ et $\lambda$ de $J (\L)$ d\'efinies par 
 $$
 \lambda_P = \kappa_P \otimes  \rho  , \quad   
 \lambda = \kappa \otimes  \rho .  
$$
D'apr\`es \cite[Lemma 6.1]{S5} -- dont la d\'emonstration n'utilise pas le fait que $\kappa$ soit une $\beta$-extension -- 
 elles sont irr\'eductibles, on a 
$\l = \cInd_{J_P }^{J } \l_P$  et l'isomorphisme d'alg\`ebres naturel de
$\mathcal H(G , \l_P)$ sur $\mathcal H(G , \l)$ envoie une fonction de support $J_P  y J_P $, $y \in G_E $, sur  une fonction de support $J  y J $.

  Fixons, pour $i= 2, \cdots, l$,  une $\oEi$-base   de la suite de r\'eseaux $\L^i$, 
  fixons   une $\oEun$-base   de la suite de r\'eseaux $\L^1\cap W^{(0)}$, et prenons la base $   \mathcal B^{(-1)} \cup  \mathcal B^{( 1)}$ de $W^{(-1)}\oplus W^{(1)}$. Ceci d\'efinit un tore d\'eploy\'e maximal de $G_E^+ $   de normalisateur $N^+$. Soit 
  $N_\L^+ $ l'intersection de $N^+$ avec le normalisateur dans $G_E^+$ de $P^0(\L_\oE )\cap M$. 
  On a 
  $N_\L^+= N_{\L^{(0)}}^+ \times N_{\L^{\ne 0}}^+ $ o\`u $N_{\L^{(0)}}^+$ et  $N_{\L^{\ne 0}}^+$  sont les objets analogues d\'efinis dans  $G_E^+ \cap \GL_F(W^{(0)}) = G_{E_1}^+\cap \GL_F(W^{(0)}\cap V^1) \times G_{E_2}^+
  \times \cdots G_{E_l}^+$ et $G_{E_1}^+ \cap \GL_F(W^{(-1)}\oplus W^{(1)}) $ 
  respectivement. 
   Soit $\tau$ une composante irr\'eductible de la restriction de $\rho$ \`a $P^0(\L_\oE)$ ;  
 on peut l'\'ecrire  comme plus haut  sous la forme   
  $\tau^{(0)} \otimes \iota(\widetilde\rho ^{(1)})$, de sorte que 
  $$
  \aligned 
  N_\L^+(\tau)  &:= \{n \in N_\L^+   \ / \ ^n\tau \simeq \tau\} 
  = N_{\L^{(0)}}^+(\tau^{(0)})  \times N_{\L^{\ne 0}}^+(\iota(\widetilde\rho ^{(1)})) 
  \\
  \text{et }  N_\L (\tau)  &:=  N_\L^+(\tau) \cap G = \left(N_{\L^{(0)}}^+(\tau^{(0)})  \times N_{\L^{\ne 0}}^+(\iota(\widetilde\rho ^{(1)})) \right) \cap G . 
  \endaligned 
  $$
  
  Rappelons que $s_1$ et $ s_1^\varpi$  appartiennent \`a $G_E$ 
 dans tous les cas except\'e celui du groupe sp\'ecial orthogonal avec $\beta_1 = 0$ 
 et $\dim_F W^{(1)} $ impaire  \cite[\S 6.2]{S5}. Pla\c cons-nous dans cette situation d'exception et supposons $V^1\cap W^{(0)} \ne \{0\}$. Le groupe $ G_{E_1}^+\cap \GL_F(W^{(0)}\cap V^1)$ est alors un groupe orthogonal et l'on peut choisir $p \in 
P^+(\L_\oE^{(0)}\cap V^1)$ de d\'eterminant $-1$   tel que $p^2=1$ 
 \cite[Case (i) p.350]{S5} ; si $V^1\cap W^{(0)}$ est de dimension impaire on choisit 
 $p=-1$. Alors  $p s_1$ et $ p s_1^\varpi$   v\'erifient toutes les propri\'et\'es de $ s_1$ et $  s_1^\varpi$ utilis\'ees pr\'ec\'edemment et appartiennent \`a $G_E$. Dans tous les autres cas posons $p=1$. 
  Soit enfin $W$ le groupe de Weyl affine \`a deux g\'en\'erateurs engendr\'e par $p s_1$ et $ p s_1^\varpi$  
  (par abus de langage - plus exactement c'est le quotient du groupe engendr\'e par $ps_1$, $ ps_1^\varpi$ et $\iota(\mathfrak o_{E_1}^\times)$ 
  par $\iota(\mathfrak o_{E_1}^\times)$). Alors  $N_{\L^{\ne 0}} / N_{\L^{\ne 0}} \cap P (\L_\oE^{\ne 0})  $ est    isomorphe \`a $W$ lorsque $ps_1$ appartient \`a $G_E$, c'est-\`a-dire
 dans tous les cas except\'e celui du  {\it groupe sp\'ecial orthogonal avec $\beta_1 = 0$,  
   $\dim_F W^{(1)} $ impaire et  $V^1\cap W^{(0)} = \{0\}$}.

	\begin{Proposition}[\bf  \cite{S5} \S 6.4]\label{support}
Posons $J_P  = J_P  (\L)$ et 	soit  $\kappa $  un prolongement  de $\eta$ \`a $J^+ (\L)$.  
\begin{enumerate}
	\item  Dans les trois cas suivants : 
	
\begin{enumerate}
	\item la repr\'esentation $\widetilde\lambda ^{(1)}$   n'est pas \'equivalente \`a $\widetilde\lambda ^{(1)}   \circ  \sigma_1$,
	\item $p$
	n'entrelace pas  $\tau^{(0)}$,
	\item $ps_1$ n'appartient pas \`a $G_E$, 
\end{enumerate}
 le support de l'alg\`ebre $  \mathcal H(G, \lambda_P)$ 
	est $J_P (I_{G^{(0)}}(\lambda^{(0)})  \times \iota(E_1^\times)) J_P$. 
\item 	
	Si $\widetilde\lambda ^{(1)}$ 
   est  \'equivalente \`a $\widetilde\lambda ^{(1)}\circ \sigma_1$ et 
   $p$
	 entrelace   $\tau^{(0)}$ 
   et $ps_1$  appartient   \`a $G_E$,   le support de  $  \mathcal H(G, \lambda_P)$
	est $J_P \, ( I_{G^{(0)}}(\lambda^{(0)})  \rtimes W) \,  J_P$.  
	\end{enumerate}
\end{Proposition}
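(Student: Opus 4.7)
\noindent\emph{Esquisse de d\'emonstration.}

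L'id\'ee est de d\'eterminer l'ensemble des $g \in G$ qui entrelacent $\lambda_P$, ce qui d\'ecrit le support de $\mathcal H(G, \lambda_P)$. \textbf{Premi\`ere \'etape} (r\'eduction \`a $G_E$) : l'entrelacement de $\eta$ dans $G$ \'etant $J^1(\L) G_E J^1(\L)$ par les r\'esultats classiques sur les caract\`eres semi-simples gauches, les techniques d'invariants par $J^1 \cap U$ du paragraphe \ref{subord} entra\^\i nent que l'entrelacement de $\kappa_P$ dans $G$ est contenu dans $J_P G_E J_P$. Modulo $J_P$, les entrelaceurs se ram\`enent donc \`a des \'el\'ements de $G_E$ normalisant $P^0(\L_\oE) \cap M$, c'est-\`a-dire \`a des repr\'esentants dans $N_\L$.

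\textbf{Deuxi\`eme \'etape} : un \'el\'ement $g \in N_\L^+$ entrelace $\lambda_P = \kappa_P \otimes \rho$ si et seulement si il entrelace $\kappa_P$ \emph{et} fixe, \`a isomorphisme pr\`es, la classe de $\tau = \tau^{(0)} \otimes \iota(\widetilde\rho^{(1)})$. Ceci donne la condition $g \in N_\L(\tau) = \bigl(N_{\L^{(0)}}^+(\tau^{(0)}) \times N_{\L^{\ne 0}}^+(\iota(\widetilde \rho^{(1)}))\bigr) \cap G$. La composante en $G^{(0)}$ contribue directement $I_{G^{(0)}}(\lambda^{(0)})$, et la discussion se ram\`ene \`a d\'ecrire la composante dans $W^{(-1)} \oplus W^{(1)}$.

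\textbf{Troisi\`eme \'etape} (g\'en\'erateurs et conditions) : par le Lemme \ref{equiv}, $s_1$ et $s_1^\varpi$ entrelacent $\kappa_P$ lorsque $\kappa$ est une $\b$-extension. Une fois tensoris\'es par $\rho$, les \'el\'ements $ps_1$ et $ps_1^\varpi$ entrelacent $\lambda_P$ exactement lorsque les trois conditions $\widetilde\lambda^{(1)} \simeq \widetilde\lambda^{(1)}\circ \sigma_1$, $p$ entrelace $\tau^{(0)}$, et $ps_1 \in G_E$ (contraires de (a), (b), (c)) sont simultan\'ement v\'erifi\'ees. D'apr\`es le Lemme \ref{s1pi}, les produits et puissances de $ps_1, ps_1^\varpi$ engendrent modulo $J_P \cap M$ le groupe $W$, qui contient $\iota(E_1^\times)$ comme sous-groupe des translations (obtenu par $(ps_1)(ps_1^\varpi) \equiv \iota(\epsilon(-1)^{e_0-1}\varpi_1^{-1}) \pmod{J_P \cap M}$ puis passage aux puissances).

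\textbf{Conclusion} : dans le cas (i), au moins une des trois conditions \'echoue, donc $ps_1$ et $ps_1^\varpi$ n'entrelacent pas $\lambda_P$ ; il ne reste comme entrelaceurs dans la composante $W^{(\pm 1)}$ que les translations $\iota(E_1^\times)$ d\'ej\`a pr\'esentes dans $N_{\L^{\ne 0}}^+(\iota(\widetilde \rho^{(1)}))$, d'o\`u le support $J_P (I_{G^{(0)}}(\lambda^{(0)}) \times \iota(E_1^\times)) J_P$. Dans le cas (ii), les entrelaceurs issus de $ps_1$ et $ps_1^\varpi$ fournissent le groupe $W$ complet, d'o\`u $J_P (I_{G^{(0)}}(\lambda^{(0)}) \rtimes W) J_P$. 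Le point le plus d\'elicat est l'introduction de $p$ dans le cas exceptionnel (groupe sp\'ecial orthogonal avec $\beta_1 = 0$ et $\dim_F W^{(1)}$ impaire), qui permet, lorsque $V^1 \cap W^{(0)} \ne \{0\}$, de contourner l'obstruction $s_1 \notin G_E$ tout en pr\'eservant les propri\'et\'es d'entrelacement du Lemme \ref{equiv}.
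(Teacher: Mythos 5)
Votre strat\'egie g\'en\'erale (r\'eduction \`a $G_E$, puis \`a $N_\L(\tau)$, puis examen de $ps_1$ et $ps_1^\varpi$) est celle de l'article, mais deux maillons essentiels manquent.

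D'abord, le ``donc'' de votre premi\`ere \'etape est injustifi\'e : savoir que l'entrelacement de $\kappa_P$ est contenu dans $J_P G_E J_P$ ne permet pas de conclure que les entrelaceurs de $\lambda_P = \kappa_P \otimes \rho$ se ram\`enent \`a des \'el\'ements normalisant $P^0(\L_\oE)\cap M$, c'est-\`a-dire \`a $N_\L$, ni a fortiori \`a $N_\L(\tau)$. Cette r\'eduction repose de fa\c con cruciale sur la cuspidalit\'e de $\rho$ (l'entrelacement d'une cuspidale d'un groupe r\'eductif fini entra\^\i ne la conjugaison des paraboliques en jeu) ; c'est pr\'ecis\'ement le contenu des r\'esultats de \cite{S5} \S 6.4 (Proposition 6.15, Corollaire 6.16 et d\'emonstration de la Proposition 6.18) que l'article invoque pour obtenir l'inclusion de l'entrelacement de $\lambda_P$ dans $J_P N_\L(\tau) J_P$. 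Sans cet argument, rien n'exclut des entrelaceurs dans $G_E$ hors de $N_\L(\tau)$. De m\^eme, votre deuxi\`eme \'etape \'enonce un ``si et seulement si'' alors que l'appartenance \`a $N_\L(\tau)$ n'est a priori que n\'ecessaire ; la suffisance doit \^etre v\'erifi\'ee \'el\'ement par \'el\'ement (l'article v\'erifie que $\iota(E_1^\times)$ ``entrelace effectivement'').

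Ensuite, votre troisi\`eme \'etape ne traite que le cas o\`u $\kappa$ est une $\b$-extension, puisque le Lemme \ref{equiv} ne s'applique qu'\`a ce cas ; or la proposition porte sur un prolongement \emph{arbitraire} de $\eta$ \`a $J^+(\L)$, et les conditions y sont formul\'ees en termes de $\widetilde\lambda^{(1)}$ justement pour absorber la torsion. Il faut, comme le fait l'article, \'ecrire $\kappa = \kappa^s \otimes \chi$ avec $\kappa^s$ une $\b$-extension standard et $\chi$ un caract\`ere de $P^+(\L_\oE)/P^1(\L_\oE)$ (unicit\'e des op\'erateurs d'entrelacement de $\eta$ \`a scalaire pr\`es), poser $\chi = \chi^{(0)}\otimes\iota(\widetilde\chi^{(1)})$, et observer que l'entrelacement de $\lambda_P = \kappa^s_P \otimes \chi\rho$ est gouvern\'e par l'action de $\sigma_1$ sur $\widetilde\chi^{(1)}\otimes\widetilde\rho^{(1)}$, ce qui ram\`ene bien au cas standard avec la condition port\'ee par $\widetilde\lambda^{(1)}$. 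Le reste de votre conclusion (cas (i) contre cas (ii), r\^ole de $p$ dans le cas orthogonal exceptionnel, engendrement des translations $\iota(E_1^\times)$ par les produits $(ps_1)(ps_1^\varpi)$) est conforme \`a l'article.
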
 
  
  \begin{proof} 
 Supposons tout d'abord que  $\kappa $  est une $\beta$-extension standard de $\eta$ ; en particulier $\tilde \kappa^{(1)}$ est   \'equivalente \`a $\tilde \kappa^{(1)}\circ \sigma_1$
 et la condition ``$\widetilde\lambda ^{(1)}$ 
   est  \'equivalente \`a $\widetilde\lambda ^{(1)}\circ \sigma_1$'' est v\'erifi\'ee si et seulement si $\widetilde\rho ^{(1)}$ 
   est  \'equivalente \`a $\widetilde\rho ^{(1)}\circ \sigma_1$ (variante de \cite[d\'emonstration de la Proposition 5.3.2]{BK}).  
  L'appli\-ca\-tion de \cite{S5}  (Proposition  6.15, Corollary 6.16 et   d\'emonstration de la Proposition  6.18) montre que l'entrelacement de $\l_P$ est contenu dans 
  $J_P N_\L(\tau) J_P$.  D'autre part 
  l'entrelacement de $\l_P$ dans $M$ est exactement l'entrelacement de 
  $\l^{(0)} \otimes \iota(\widetilde\l ^{(1)})$ \cite[Proposition 6.3 (ii)]{BK1}. 
  Si   $\widetilde\rho ^{(1)}$   n'est pas \'equivalente \`a $\widetilde\rho ^{(1)}   \circ  \sigma_1$ ou si $p s_1$ n'appartient pas \`a $G_E$, 
   $N_{\L^{\ne 0}}(\iota(\widetilde\rho ^{(1)}))$ se r\'eduit  \`a 
  $\iota(E_1^\times)$ qui entrelace effectivement la repr\'esentation, d'o\`u le r\'esultat. 
  Il en est de m\^eme si $p$
	n'entrelace pas  $\tau^{(0)}$, car $ps_1$ alors n'entrelace pas $\tau$. 
  Dans le second cas au contraire, $ps_1$  entrelace   $\tau$, donc $\rho$, donc $\lambda_P$.

 Un prolongement $\kappa$ arbitraire  de $\eta$ \`a $J^+ (\L)$ peut s'\'ecrire sous la forme $\kappa^s \otimes \chi$ o\`u   $\kappa^s $ est une 
$\b$-extension standard de $\eta$ et $\chi$ est un caract\`ere de $P^+(\L_\oE)/ P^1(\L_\oE)$  (par unicit\'e des op\'erateurs d'entrelacement de $\eta$ \`a scalaire pr\`es \cite[Proposition 3.5]{S5}). Ecrivons $\chi =  \chi^{(0)} \otimes \iota(\widetilde\chi^{(1)})$. Alors l'entrelacement de $\l_P = (\kappa^s \otimes \chi)_P\otimes \rho = \kappa^s_P \otimes \chi \rho$ est d\'etermin\'e par  l'action de $\sigma_1$ sur $\widetilde\chi^{(1)} \otimes \widetilde\rho ^{(1)}$. Comme $\widetilde {\kappa^s}^{(1)}$ est \'equivalente \`a $\widetilde {\kappa^s}^{(1)}\circ \sigma_1$, la premi\`ere partie donne le r\'esultat. 
Noter que si $\kappa$ est une $\b$-extension  le lemme 
\ref{equiv}  s'applique aussi bien \`a $\kappa$ qu'\`a $\kappa^s$, le caract\`ere $\widetilde\chi^{(1)}$ est donc $\sigma_1$-invariant (voir aussi \cite[Corollary 6.13]{S5}), de sorte que $\widetilde\rho ^{(1)}$ est 
$\sigma_1$-invariante si et seulement si $\widetilde\rho ^{(1)}\otimes \widetilde\chi^{(1)}$ l'est.  
\end{proof} 

\begin{Corollary}\label{cover} Soit $\kappa$
un prolongement  de $\eta$ \`a $J^+ (\L)$ et $\lambda = \kappa\otimes \rho$ comme ci-dessus. 
Notons ${J}^{(0)}=  J (\b^{(0)}, \L^{(0)} )$ et 
$ \widetilde {J}^{(1)}= \widetilde J (\b^{(1)}, \L^{(1)} )$. 
La paire $(J_P , \lambda_P)$ est une paire couvrante de 
$(  {J}^{(0)}\times \iota(  \widetilde {J}^{(1)} ), \  
\lambda^{(0)} \otimes \iota(\tilde \lambda^{(1)}))$ dans~$G$. Il en est de m\^eme de la paire 
analogue $(J_{P^-}, \lambda_{P^-})$. 
 
 	Dans le cas (i) de la proposition,  
  l'alg\`ebre $  \mathcal H(G, \lambda_P)$ est isomorphe \`a 
  $  \mathcal H(M, {\lambda_P}_{|J\cap M})$.  En particulier, la repr\'esentation induite parabolique d'une repr\'esentation irr\'eductible  de $M$ de type ${\lambda_P}_{|J\cap M}$ est toujours irr\'eductible.

   	Dans le cas (ii) de la proposition, si  $I_{G^{(0)}}(\lambda^{(0)}) = {J}^{(0)}$, 
l'alg\`ebre $  \mathcal H(G, \lambda_P)$ est une alg\`ebre de convolution sur $W$. Sa structure est d\'etermin\'ee par les relations quadratiques v\'erifi\'ees par  deux g\'en\'erateurs $T_1$ et $T_1^\varpi$ de supports respectifs $J_P p s_1 J_P$ et $J_P p s_1^\varpi J_P$. 
\end{Corollary}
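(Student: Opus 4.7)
The plan is to derive the cover property and the Hecke algebra description as essentially formal consequences of Lemma~\ref{s1pi} and Proposition~\ref{support}. The statement has three parts---the cover property (in both versions $P$ and $P^-$), case~(i), and case~(ii)---and each requires verifying only one nontrivial ingredient on top of the general Bushnell--Kutzko formalism for covers.

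For the cover property, I would check the three standard axioms in turn. The Iwahori decomposition of $J_P$ relative to $(M,P)$, and its analogue for $J_{P^-}$, is recalled at the beginning of \S\ref{pc}. Triviality of $\lambda_P$ on $J_P\cap U = J^1(\L)\cap U$ is immediate: $\kappa_P$ is constructed by $(J^1(\L)\cap U)$-invariants, and $\rho$ factors through $J_P/J_P^1 \simeq P(\L_\oE)/P^1(\L_\oE)$, which kills $J^1(\L)\cap U$. Similarly $\lambda_P$ is trivial on $J_P\cap U^- = H^1(\L)\cap U^-$, inherited from $\theta_P$; the symmetric statements hold for $\lambda_{P^-}$. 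The only nontrivial axiom is the invertibility of a strongly positive central Hecke element, and this is exactly the content of Lemma~\ref{s1pi}: $\zeta' = \iota(\varpi_F^{-1})$ is strongly positive (or strongly negative) in the centre of $M$, and since $J_P\zeta' J_P = (J_P s_1 J_P s_1^\varpi J_P)^{e(E_1/F)}$, its Hecke element is a power of a product of well-defined nonzero Hecke elements, hence invertible by the standard argument for strongly positive central double cosets.

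In case~(i) of Proposition~\ref{support}, the support of $\mathcal H(G,\lambda_P)$ is contained in $J_P\cdot M\cdot J_P$ because both $I_{G^{(0)}}(\lambda^{(0)})$ and $\iota(E_1^\times)$ lie in $M$. The general Bushnell--Kutzko injection $j_P : \mathcal H(M,\lambda_P|_{J\cap M})\into \mathcal H(G,\lambda_P)$, whose image consists of functions supported on $J_P\, M \, J_P$, is then surjective, giving the desired isomorphism. The irreducibility of parabolically induced representations is then a standard consequence: the cover produces an equivalence between the Bernstein components $\mathcal R^{[\sigma,M]}(M)$ and $\mathcal R^{[\sigma,M]}(G)$ in which parabolic induction corresponds to the identity on Hecke modules, so induction preserves irreducibility.

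In case~(ii), under the hypothesis $I_{G^{(0)}}(\lambda^{(0)}) = J^{(0)}$, Proposition~\ref{support}(ii) shows that the support of $\mathcal H(G,\lambda_P)$ equals $J_P\, W\, J_P$, with each double coset contributing a one-dimensional space of Hecke functions (the $M$-intertwining of $\lambda^{(0)}\otimes \iota(\widetilde\lambda^{(1)})$ reduces to $J\cap M$ by hypothesis, and the $\sigma_1$-twisted intertwiner on the $(1)$-factor is unique up to scalar by Lemma~\ref{equiv}). Hence $\mathcal H(G,\lambda_P)$ is a convolution algebra on $W$. Since $W$ is generated as an affine Weyl group by $ps_1$ and $ps_1^\varpi$, the algebra is generated by $T_1$ and $T_1^\varpi$, and its complete multiplication table is determined by their two quadratic relations. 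The main obstacle in the whole argument is the invertibility in the cover axiom, which Lemma~\ref{s1pi} is precisely designed to deliver; everything else is a direct transcription of the Bushnell--Kutzko formalism to the present semi-simple setting.
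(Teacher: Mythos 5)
Your overall architecture matches the paper's (decomposed pair + Lemma~\ref{s1pi} + Proposition~\ref{support}), and your treatment of case (i) is essentially the paper's: intertwining contained in $J_P M J_P$ gives the cover and the isomorphism with $\mathcal H(M,\lambda_P|_{J\cap M})$ by \cite[Comments 8.2]{BK1}. But there is a genuine gap at the one place where the argument is not formal, namely the invertibility needed for the cover property in case (ii). You write that since $J_P\zeta' J_P=(J_Ps_1J_Ps_1^\varpi J_P)^{e(E_1/F)}$, the Hecke element supported on $J_P\zeta'J_P$ is ``a power of a product of well-defined nonzero Hecke elements, hence invertible by the standard argument for strongly positive central double cosets.'' This is a non sequitur: a product of nonzero elements of an algebra need not be invertible, and there is no standard argument giving invertibility of the element supported on a strongly positive central double coset for free --- that invertibility is precisely the criterion of \cite[Theorem 7.2]{BK1} that one must verify. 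Lemma~\ref{s1pi} only reduces the problem to showing that the elements $T_1$ and $T_1^\varpi$ supported on $J_Pps_1J_P$ and $J_Pps_1^\varpi J_P$ are themselves invertible; this is a real computation, which the paper discharges by invoking \cite{GKS} \S 2.2 (Lemmas 2.10 and 2.13: each $T_i$ satisfies a quadratic relation with nonzero constant term, obtained by transferring the problem to a finite Hecke algebra) or equivalently by the injections $j_0,j_1$ of Proposition~\ref{injection1}. Without some version of this step your proof of the cover property in case (ii), and hence of the whole algebra structure, does not close.

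A secondary omission: for the pair $(J_{P^-},\lambda_{P^-})$ it is not enough to note the ``symmetric'' triviality properties; one must check that it is a cover of the \emph{same} pair on $M$, i.e. that the action of $\kappa$ on the $J^1(\L)\cap U^-$-invariants of $\eta$ restricted to $J\cap M$ is again $\kappa^{(0)}\otimes\tilde\kappa^{(1)}$. The paper proves this by realizing $\eta\simeq\cInd_{J^1_P}^{J^1}\eta_P$ as functions on $J^1\cap U^-/H^1\cap U^-$ and observing that the $U^-$-invariants are the constant functions; you should include this (short) verification.
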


\begin{proof}  
Dans le cas (i) de la proposition, 
l'entrelacement de  $\l_P  $ est form\'e de doubles classes d'\'el\'ements de $M$, 
la paire est donc  couvrante par \cite[Comments 8.2]{BK1} ; les cons\'equences sur l'irr\'eductibilit\'e sont prouv\'ees dans {\it loc. cit. } et rappel\'ees au paragraphe suivant. 

Dans l'autre cas, 
l'entrelacement de  $\l_P  $ est $J_P W J_P$. Gr\^ace au lemme \ref{s1pi}, qui reste valide pour $p s_1  $ et $  p s_1^\varpi $,  et aux r\'esultats de 
{\it loc. cit.}, il suffit   de montrer que les \'el\'ements de  $  \mathcal H(G, \lambda_P)$ de support 
  $J_P p s_1 J_P$ et $J_P  p s_1^\varpi J_P$ sont inversibles, ce que l'on peut faire comme dans 
  \cite{GKS} \S 2.2 (en v\'erifiant  que les d\'emonstrations des lemmes 2.10 et 
  2.13    s'appliquent mot pour mot) ou en passant directement aux \'enonc\'es plus pr\'ecis ci-apr\`es.  
  
Pour passer de $P$ \`a $P^-$ 
 il suffit de remarquer que  l'action de $\kappa |_{J_{P^-}} $ sur les $J^1(\L) \cap U^-$-invariants de $\eta$ est aussi \'egale  \`a $\kappa^{(0)} \otimes \tilde \kappa^{(1)}$
sur $J \cap M$.
C'est \'el\'ementaire en prenant le mod\`ele de 
$\eta \simeq \cInd_{J_P^1}^{J^1} \eta_P$ form\'e des fonctions de 
$J^1\cap U^- / H^1 \cap U^-$ dans l'espace de $\eta_P$ : 
les $J^1(\L) \cap U^-$-invariants sont les fonctions constantes.  
\end{proof}
 
 Pour obtenir des pr\'ecisions sur les relations quadratiques du   second cas ci-dessus  nous suivons encore \cite{S5}, \S 7.1 et 7.2.2. Commen\c cons par construire deux suites de $\oE$-r\'eseaux autoduales  $\mathfrak M_0= \mathfrak M_0^1 \perp \L^2 \cdots \perp \L^l$ et $\mathfrak M_1=\mathfrak M_1^1 \perp \L^2 \cdots \perp \L^l$ comme dans {\it loc. cit.} \S 7.2.2, \`a partir des 
   suites de $\mathfrak o_{E_1}$-r\'eseaux $\mathfrak M_0^1 $ et $\mathfrak M_1^1$ 
dans $V^1$,  de p\'eriode $2$ sur $E_1$,  caract\'eris\'ees par~: 
 $$\mathfrak M_0^1 (0)  = \L^1(1-q_1)  , \quad \mathfrak M_0^1 (1) = \L^1(q_1),\qquad 
  \mathfrak M_1^1(0)= \L^1( 1 -\frac{e_1}{2}), \quad  \mathfrak M_1^1(1)= \L^1(  \frac{e_1}{2}).$$ 
 Les ordres $\mathfrak b_0(\mathfrak M_0)$ et $\mathfrak b_0(\mathfrak M_1)$ sont des $\oE$-ordres autoduaux maximaux contenant $\mathfrak b_0(\L)$ et 
 la d\'ecomposition $ V = W^{(-1)} \oplus W^{(0)}\oplus W^{(1)}  $
  est  subordonn\'ee aux strates $[\mathfrak M_i ,n_{\mathfrak M_i} ,0,\b ]$ ($i=0,1$). 
  Enfin $s_1$ appartient \`a $P^+(\mathfrak  M_{0 ,\oE})$, $s_1^\varpi$ appartient \`a $P^+(\mathfrak  M_{1 ,\oE})$ et  l'on a :
\begin{equation}\label{plusplus}
P^+  (  \L_\oE ) = 
\left(P^+ (  \mathfrak M_{1 ,\oE} )\cap P^-
\right) P^1(  \mathfrak M_{1 ,\oE} ) = 
\left(P^+ (  \mathfrak M_{0 ,\oE} )\cap P 
\right) P^1(  \mathfrak M_{0 ,\oE} ) .
\end{equation} 
En particulier, la repr\'esentation     $\rho = \rho^{(0)} \otimes \iota(\widetilde\rho ^{(1)})$     de $J(\L)/J^1(\L) \simeq 
  P(   \L_\oE  )  / P^1(   \L_\oE   ) $  s'\'etend en une repr\'esentation  toujours not\'ee 
  $\rho$ 
  du sous-groupe parabolique $P (   \L_\oE )  / P^1(  \mathfrak M_{1 ,\oE} ) $ de $P  (  \mathfrak M_{1 ,\oE} )$ tri\-viale sur son radical unipotent $P^1 (   \L_\oE )  / P^1(  \mathfrak M_{1 ,\oE} ) $, et de m\^eme  avec $ \mathfrak M_{0 ,\oE}$. Avec ces conventions on a~: 
  \begin{Proposition}[\bf  \cite{S5}(7.3)]\label{injection1}
	Si $\kappa $ est une $\beta$-extension de $\eta$ relative \`a $ \mathfrak M_{0  }$,  
  il y a un homomorphisme d'alg\`ebres injectif 
	$$
	j_{ \, 0} : \   \mathcal H(P  (  \mathfrak M_{0 ,\oE} ), \rho  ) \ 
	\hookrightarrow \ 
	  \mathcal H(G, \lambda_P) 
$$
	pr\'eservant le support :   $\text{ \rm Supp}(j_0(\phi))
	= J_P \text{ \rm Supp} (\phi) J_P$. 
	Il en est de m\^eme en rempla\c cant $ \mathfrak M_{0  }$ par $ \mathfrak M_{1  }$, 
	avec un homomorphisme $j_1$.  
\end{Proposition}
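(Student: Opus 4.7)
Le plan est de suivre la construction de \cite[\S 7.1]{S5}, dont la proposition est essentiellement une reformulation adapt\'ee au cadre pr\'esent. L'hypoth\`ese que $\kappa$ est une $\b$-extension relative \`a $\fM_0$ fournit, par la d\'efinition \ref{betaextension}(ii), une $\b$-extension $\kappa^{\fM_0}$ de $\eta^{\fM_0}$ \`a $J^+(\fM_0)$ telle que $\mathfrak B_{\L \, \fM_0}(\kappa)$ co\"\i ncide avec la restriction de $\kappa^{\fM_0}$ \`a $P^+(\L_\oE) J^1(\fM_0)$. La d\'ecomposition (\ref{plusplus}) identifie alors $P(\L_\oE)/P^1(\fM_{0,\oE})$ \`a un sous-groupe parabolique du groupe fini r\'eductif $\mathcal G_0 = P(\fM_{0,\oE})/P^1(\fM_{0,\oE})$, de radical unipotent $P^1(\L_\oE)/P^1(\fM_{0,\oE})$~; cette identification justifie que le prolongement de $\rho$ soit trivial sur $P^1(\fM_{0,\oE})$, donc sur $J^1(\fM_0)$, et donc que $\lambda^{\fM_0} := \kappa^{\fM_0} \otimes \rho$ soit une repr\'esentation bien d\'efinie de $J(\fM_0) = P(\fM_{0,\oE}) J^1(\fM_0)$.

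L'\'etape technique centrale est d'identifier $\lambda_P$ \`a la restriction de $\lambda^{\fM_0}$ \`a $J_P$ (ce groupe \'etant contenu dans $J(\fM_0)$ puisque $J^1(\L) \subseteq J^1(\fM_0)$ et $P(\L_\oE) \cap P \subseteq P(\fM_{0,\oE})$). La description de $\mathfrak B_{\L \, \fM_0}(\kappa)$ via la bijection canonique et le lemme \ref{invariantsbis} montrent que $\kappa^{\fM_0}$ restreint \`a $J_P$ co\"\i ncide avec $\kappa_P$, et la tensorisation avec $\rho$ (trivial sur les parties unipotentes concern\'ees) donne alors l'identification voulue. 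L'application $j_0$ est ensuite d\'efinie ainsi~: pour $\phi \in \mathcal H(P(\fM_{0,\oE}), \rho)$ \`a support dans une double classe $P(\fM_{0,\oE}) y P(\fM_{0,\oE})$, $j_0(\phi)$ est l'unique fonction de $\mathcal H(G, \lambda_P)$ \`a support dans $J_P y J_P$ v\'erifiant $j_0(\phi)(y) = \phi(y)$~; ceci a un sens puisque $y \in P(\fM_{0,\oE}) \subseteq J(\fM_0)$ entrelace $\lambda^{\fM_0}$, donc par restriction $\lambda_P$. La pr\'eservation des supports est alors imm\'ediate, et l'injectivit\'e suit du fait que les doubles classes $J_P y J_P$ pour $y$ dans des doubles classes distinctes de $\mathcal G_0$ restent distinctes dans $G$.

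L'obstacle principal sera la v\'erification de la multiplicativit\'e $j_0(\phi_1 * \phi_2) = j_0(\phi_1) * j_0(\phi_2)$. Elle se ram\`ene \`a un calcul de convolution similaire \`a celui de \cite[Proposition 7.10]{BK}, dans lequel on d\'ecompose $J_P y_1 J_P y_2 J_P$ selon ses orbites \`a gauche sous $J_P$ puis on identifie le r\'esultat en utilisant la d\'ecomposition d'Iwahori de $J^1(\fM_0)$ relative \`a $(M, P)$ d\'ecoulant de (\ref{plusplus}). Le fait que $\lambda^{\fM_0}$ soit une seule repr\'esentation du groupe $J(\fM_0)$ -- et non une collection d'op\'erateurs d'entrelacement choisis ad hoc -- est crucial ici, puisqu'il garantit automatiquement la coh\'erence des valeurs sur les produits. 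Le lemme \ref{s1pi} fournit par ailleurs le contr\^ole n\'ecessaire sur les doubles classes pour les \'el\'ements types $s_1, s_1^\varpi$ qui engendrent la composante affine de $\mathcal G_0$.
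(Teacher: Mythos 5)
Your overall strategy (produce $\kappa^{\fM_0}$ on $J^+(\fM_0)$ from the $\b$-extension hypothesis via the canonical bijection, then transfer the finite Hecke algebra) is the same as the paper's, which defers to \cite[\S 7.1]{S5}. But there is a genuine error at the load-bearing step. You assert that, because the extension of $\rho$ to the parabolic subgroup $P(\L_\oE)/P^1(\fM_{0,\oE})$ is trivial on $P^1(\fM_{0,\oE})$, the tensor product $\lambda^{\fM_0}=\kappa^{\fM_0}\otimes\rho$ is a well-defined representation of the whole group $J(\fM_0)=P(\fM_{0,\oE})J^1(\fM_0)$. It is not: $\rho$ is a cuspidal representation of the Levi $P(\L_\oE)/P^1(\L_\oE)$, extended only to the parabolic subgroup $P(\L_\oE)/P^1(\fM_{0,\oE})$ of the finite reductive group $P(\fM_{0,\oE})/P^1(\fM_{0,\oE})$, and it does not extend to that whole group in general (if it did, the algebra $\mathcal H(P(\fM_{0,\oE}),\rho)$ of the statement would not carry the nontrivial quadratic relation the whole construction is after). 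So $\lambda^{\fM_0}$ is only a representation of $P(\L_\oE)J^1(\fM_0)$, and your argument for the multiplicativity of $j_0$ --- which you rightly single out as the main obstacle, and which you base entirely on the ``coh\'erence automatique des valeurs sur les produits'' for a representation of $J(\fM_0)$ --- collapses. The same confusion shows in the normalisation $j_0(\phi)(y)=\phi(y)$: the left-hand side must be an endomorphism of the space of $\lambda_P$, the right-hand side is one of the space of $\rho$ only.

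What does extend to $J(\fM_0)$ (indeed to $J^+(\fM_0)$) is $\kappa^{\fM_0}$ alone, and this is exactly where the $\b$-extension hypothesis is used. The correct definition, as in \cite[\S 7.1]{S5} (after \cite[\S 5.6]{BK}), is $j_0(\phi)(x)=\kappa^{\fM_0}(x)\otimes\phi(\bar x)$ for $x\in J(\fM_0)$ and $0$ elsewhere; multiplicativity is then a short computation using that $\kappa^{\fM_0}$ is a homomorphism on $J(\fM_0)$ and that convolution over $J(\fM_0)$ descends to convolution in the finite quotient. This is precisely the point the paper's proof isolates: it checks that \cite[Proposition 7.2]{S5} still applies because $\kappa(\fM_0)|_{P(\L_\oE)J^1(\fM_0)}$ --- not $\lambda$ --- prolongs to $J(\fM_0)$, and then composes with the support-preserving isomorphisms of \cite[Proposition 7.1]{S5} and \cite[Lemma 6.1]{S5} to land in $\mathcal H(G,\lambda_P)$. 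Your plan can be repaired along these lines, but as written the key claim is false and the multiplicativity is not established.
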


\begin{proof} Il  faut   justifier l'usage   de \cite[\S 7.1]{S5},  qui s'occupe de repr\'esentations de groupes de la forme $J^0(\L)$, pour nos repr\'esentations de groupes $J(\L)$. 
Par hypoth\`ese, il exis\-te une  
 $\beta$-extension $\kappa(\mathfrak M_{0  })$, repr\'esentation de   $J^+(\mathfrak M_{0  })$, 
dont la restriction   \`a 
$P^+(\L_\oE)J^1(\mathfrak M_{0  })$ correspond  \`a la repr\'esentation    $\kappa$ de  $J^+(\L)$ par la bijection canonique $\mathfrak B$. 
Reprenant la d\'emonstration de \cite{S5} Proposition 7.1 pas \`a pas, on obtient un isomorphisme d'alg\`ebres de Hecke, pr\'eservant le support, de 
$ \mathcal H(G, \kappa(\mathfrak M_{0  })|_{P(\L_\oE)J^1(\mathfrak M_{0  })}\otimes \rho   )$ sur 
$ \mathcal H(G, \kappa|_{J(\L)}\otimes \rho  )$. 
L'isomorphisme de la proposition 7.2 de {\it loc. cit.} 
reste valide puisque $\kappa(\mathfrak M_{0  })|_{P(\L_\oE)J^1(\mathfrak M_{0  })}$ se prolonge \`a $J(\mathfrak M_{0  })$. Le dernier isomorphisme de la composition de \cite[(7.3)]{S5} provient   de \cite[Lemma 6.1]{S5}. 
  \end{proof}

\begin{Corollary}\label{dernier} 
Soit $\kappa$
un prolongement  de $\eta$ \`a $J^+ (\L)$ et $\lambda = \kappa\otimes \rho$ comme ci-dessus.
Supposons  que la repr\'esentation $\widetilde\lambda ^{(1)}$ est   \'equivalente \`a $\widetilde\lambda ^{(1)}\circ \sigma_1$, que    $p$
	 entrelace   $\tau^{(0)}$ 
   et que $ps_1$  appartient   \`a $G_E$. 
Soient $\chi_0$ et $\chi_1$ des caract\`eres de $P^+(\L_\oE)/P^1(\L_\oE)$ tels que 
$\kappa \otimes \chi_0^{-1}$ soit une $\beta$-extension   relative \`a $ \mathfrak M_{0  }$ 
et $\kappa \otimes \chi_1^{-1}$ une $\beta$-extension   relative \`a $ \mathfrak M_{1 }$. 
Moyennant une normalisation convenable, 
 les relations quadratiques v\'erifi\'ees par  les  g\'en\'erateurs $T_1$ et $T_1^\varpi$ de $  \mathcal H(G, \lambda_P)$ sont respectivement les relations quadratiques 
 v\'erifi\'ees par   
\begin{itemize}
	\item 
 le g\'en\'erateur de 
  $\mathcal H(P  (  \mathfrak M_{0 ,\oE} ), \rho  \otimes \chi_0 )$, de support   $P(\L_\oE ) p s_1 P(\L_\oE )$, 
 \item le g\'en\'erateur de 
  $\mathcal H(P  (  \mathfrak M_{1 ,\oE} ),   \rho \otimes \chi_1 )$, de support  $P(\L_\oE ) p s_1^\varpi P(\L_\oE )$.
  \end{itemize} 
\end{Corollary}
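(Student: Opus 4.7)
The plan is to combine the injection machinery of Proposition~\ref{injection1} with a twist by the comparison characters $\chi_0$ and $\chi_1$. First, decompose $\kappa = (\kappa\otimes\chi_0^{-1})\otimes\chi_0$, and observe that
$$
\lambda_P \;=\; \kappa_P\otimes\rho \;=\; (\kappa\otimes\chi_0^{-1})_P \otimes (\rho\otimes\chi_0).
$$
Since $\kappa\otimes\chi_0^{-1}$ is by hypothesis a $\b$-extension relative to $\fM_0$, Proposition~\ref{injection1} applied with this $\b$-extension and with the cuspidal representation $\rho\otimes\chi_0$ (viewed on $P(\fM_{0,\oE})$ via (\ref{plusplus})) furnishes an injective homomorphism of algebras
$$
j_0 : \mathcal H(P(\fM_{0,\oE}),\,\rho\otimes\chi_0) \;\hookrightarrow\; \mathcal H(G,\lambda_P),
$$
which preserves supports. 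The analogous construction with $\chi_1$ and $\fM_1$ gives $j_1$.

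Next, one identifies the source Hecke algebras. Since $\fb_0(\fM_0)$ is an $\oE$-ordre autodual maximal and $ps_1$ lies in $P^+(\fM_{0,\oE})\setminus P^+(\L_\oE)$, the finite quotient $P(\fM_{0,\oE})/P^1(\fM_{0,\oE})$ is a finite reductive group in which $P(\L_\oE)/P^1(\fM_{0,\oE})$ is a maximal parabolic whose Levi carries $\rho\otimes\chi_0$ as a cuspidal representation (the restriction of $\rho$ to $P^0(\L_\oE)$ being cuspidal, and $\chi_0$ being a character). The algebra $\mathcal H(P(\fM_{0,\oE}),\rho\otimes\chi_0)$ is therefore the two-dimensional Hecke algebra of a cuspidal representation of a maximal Levi, generated (beyond its identity) by an element $T_0$ of support $P(\L_\oE)\,ps_1\,P(\L_\oE)$ and satisfying a quadratic relation determined by Lusztig's theory. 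The same discussion applies to $\fM_1$ and gives a generator $T_0^\varpi$ of support $P(\L_\oE)\,ps_1^\varpi\,P(\L_\oE)$.

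By the support-preservation property of $j_0$, the image $j_0(T_0)$ is supported on
$$
J_P\,P(\L_\oE)\,ps_1\,P(\L_\oE)\,J_P \;=\; J_P\,ps_1\,J_P,
$$
hence is a nonzero scalar multiple of $T_1$. Since $j_0$ is a homomorphism of algebras, the quadratic relation of $T_0$ in $\mathcal H(P(\fM_{0,\oE}),\rho\otimes\chi_0)$ transports verbatim to $T_1$ once the normalization of $T_1$ has been rescaled accordingly. The analogous argument with $j_1$, applied to $\rho\otimes\chi_1$, yields the quadratic relation for $T_1^\varpi$. Since Corollary~\ref{cover} already asserts that $T_1$ and $T_1^\varpi$ generate $\mathcal H(G,\lambda_P)$ as a convolution algebra on $W$, this determines its structure completely.

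The one delicate point is verifying that Proposition~\ref{injection1} really does accommodate the twisted input $(\kappa\otimes\chi_0^{-1},\,\rho\otimes\chi_0)$: one must check that the construction of \cite{S5}, \S\,7.1 and the transfer (7.3), recalled in the proof of Proposition~\ref{injection1}, depend on the chosen extension only through its status as a $\b$-extension relative to $\fM_0$, and not on the auxiliary cuspidal factor on $P(\L_\oE)/P^1(\L_\oE)$. This is built into the functorial nature of $j_0$: the representation $\rho\otimes\chi_0$ enters only as the ``$\rho$-part'' and can be replaced by any cuspidal representation of $P(\fM_{0,\oE})/P^1(\fM_{0,\oE})$ whose restriction to the parabolic gives the required Levi component. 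The scalar normalization issue is harmless, as rescaling $T_1$ and $T_1^\varpi$ by nonzero constants does not alter the (monic, up to such a rescaling) form of the quadratic relations.
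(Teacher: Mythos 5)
Your proof is correct and follows exactly the route the paper intends: the Corollary is stated without proof as an immediate consequence of Proposition~\ref{injection1}, and your rewriting $\lambda_P=(\kappa\otimes\chi_0^{-1})_P\otimes(\rho\otimes\chi_0)$ so as to feed the genuine $\b$-extension $\kappa\otimes\chi_0^{-1}$ and the cuspidal datum $\rho\otimes\chi_0$ into that proposition, then transporting the quadratic relation along the support-preserving injection $j_0$ (and likewise $j_1$), is precisely the intended argument.
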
  
 
 Ce 
  corollaire   fournit le moyen de calculer les relations quadratiques cherch\'ees, \`a homoth\'etie pr\`es, dans un groupe r\'eductif fini. Nous indiquons dans le paragraphe suivant les conclusions qu'une telle connaissance permet de tirer en termes de r\'eductibilit\'e d'induites.

\subsection{Relations quadratiques et points de r\'eductibilit\'e}\label{rqpr}
Nous expliquons dans cette section comment 
les coefficients des relations quadratiques sa\-tis\-faites par les deux g\'en\'erateurs de l'alg\`ebre de Hecke ci-dessus, sous les hypoth\`eses communes aux corollaires \ref{cover} et  \ref{dernier}, d\'eterminent enti\`erement les parties  r\'eelles des quatre points de r\'eductibilit\'e de l'induite correspondante. 

Commen\c cons par rappeler la propri\'et\'e caract\'eristique  des paires couvrantes dans le groupe classique $G$, pour des repr\'esentations supercuspidales  d'un sous-groupe de Levi maxi\-mal $M \simeq \GL(n,F)\times  L$. On se donne une repr\'esentation supercuspidale $\pi$ de 
$\GL(n,F)$, un type  $(J_n, \lambda_n)$ pour $\pi$, une repr\'esentation supercuspidale $\sigma$ de 
$L$, un type  $(J_0, \lambda_0)$ pour $\sigma$, un sous-groupe parabolique $P$ de $G$ de facteur de Levi $M$, et on suppose construite une paire couvrante $(J, \lambda)$ de $(J_n \times J_0, \lambda_n \otimes \lambda_0)$ dans $G$. 
Par d\'efinition \cite[\S7]{BK1} 
il existe un morphisme d'alg\`ebres injectif $t : \mathcal H(M,  \lambda_n \otimes \lambda_0) \longrightarrow \mathcal H(G, \lambda ) $ rendant le diagramme suivant commutatif~: 
\begin{equation}\label{diagBK}
\begin{array}{ccc} \mathcal R^{[\pi\otimes\sigma, M]} (G)  &  \stackrel{   }{\longrightarrow}  & \text{Mod-}\mathcal H(G, \lambda ) 
\\
 \ind_P^G \uparrow   &   & t_\ast  \uparrow  
\\
\mathcal R^{[\pi\otimes\sigma, M]}  (M)   &   \stackrel{\mathfrak h }{\longrightarrow} & \text{Mod-}\mathcal H(M,  \lambda_n \otimes \lambda_0) 
\end{array}
\end{equation}
Nous travaillons ici avec l'induite normalis\'ee $   \ind_P^G $ et avec des modules \`a droite 
sur les alg\`ebres de Hecke. Le foncteur $t_\ast$ associe \`a un module $X$ le module 
$\Hom_{\mathcal H(M, \,  \lambda_n \otimes \lambda_0) }(\mathcal H(G, \lambda ) , X)$, dans lequel $t$ d\'etermine la structure de module de $\mathcal H(G, \lambda )$, et 
le foncteur $\mathfrak h $ associe \`a une repr\'esentation $\tau$ le module 
$\Hom_{J_n \times J_0}( \lambda_n \otimes \lambda_0, \tau)$. 

Les fl\`eches horizontales de (\ref{diagBK}) sont des \'equivalences de cat\'egories, ce diagramme implique donc  
que pour tout $s \in \mathbb C$, la repr\'esentation $ \  \ind_P^G \, \pi |\det\!|^s \otimes \sigma $ 
est r\'eductible si et seulement si le $\mathcal H(G, \lambda )$-module 
$  \  t_\ast  \, \mathfrak h \, (\pi |\det\!|^s\otimes \sigma) \  $ est r\'eductible.

Choisissons pour $\sigma$ un type  $(J_0, \lambda_0)$ construit par Stevens \cite[Theorem 7.14]{S5}  de sorte que $\sigma \simeq \Ind_{J_0}^{L}\lambda_0$. Alors 
 l'alg\`ebre 
$\mathcal H(M,  \lambda_n \otimes \lambda_0)$ est isomorphe \`a 
$\mathcal H(\GL(n,F),  \lambda_n  ) $.

 Choisissons pour $\pi$ 
un type simple maximal de  Bushnell-Kutzko  $(J_n, \lambda_n)$,   
attach\'e \`a une strate simple  
  $[\L_n , t_n, 0,  \beta_n]  $. Posons 
  $E_n = F[\beta_n] $ et notons $\varpi_{E_n}$ une uniformisante de 
  $E_n$. D'apr\`es  \cite[\S6]{BK}, l'entrelacement de   
   $\lambda_n$ est \'egal \`a    $ \  \widehat{J_n}= \Ex_n \,  J_n = \varpi_{E_n}^\mathbb Z \,  J_n \, $ et on a une bijection 
   $\Lambda_\tau \   \longmapsto \  \tau = \Ind_{\widehat J_n}^{\GL(n,F)} \, \Lambda_\tau$ 
   entre l'ensemble des prolongements de $\lambda_n$ \`a $\widehat J_n$ et l'ensemble des 
   classes d'\'equivalence de  repr\'esentations irr\'eductibles de la classe d'inertie de $\pi$.   Soit $Z$ un \'el\'ement de $\mathcal H(\GL(n,F),  \lambda_n  ) $ de support $ \varpi_{E_n}   J_n$ ; on a par \cite[\S 5.5]{BK} : 
 $$\mathcal H(\GL(n,F),  \lambda_n  )  \simeq 
   \mathbb C[Z, Z^{-1}] .  $$ 
   En particulier, les repr\'esentations irr\'eductibles de cette alg\`ebre sont des caract\`eres,   d\'etermi\-n\'es par leur valeur en $Z$. 
 Le groupe $X^0$ des caract\`eres non ramifi\'es de $\GL(n,F)$ agit sur 
   $\mathcal H(\GL(n,F),  \lambda_n  )$  par  
 $\chi(f)(x)=\chi(x) f(x)$ ($\chi \in X^0$, $f  \in \mathcal H(\GL(n,F),  \lambda_n  )$, $x \in  \GL(n,F)$), et si $\tau$ est une repr\'esentation irr\'eductible comme ci-dessus on a : 
   $ \  \mathfrak h(\tau \otimes \chi^{-1}) = \mathfrak h( \tau )\circ \chi  \ $  
soit :  
\begin{equation}\label{caract}
 \mathfrak h(\tau \otimes \chi^{-1})(Z) = \chi(\varpi_{E_n} ) \mathfrak h(\tau )(Z) .
 \end{equation}

Le plongement de $\GL(n,F)$ dans $M \subset G$ d\'etermine une involution $g \mapsto \hat g^{-1}$ 
sur $\GL(n,F)$ d\'eduite de l'involution adjointe (voir le d\'ebut du paragraphe pr\'ec\'edent) ;
on dira que $\pi$ est {\it autoduale }si sa classe d'isomorphisme est fix\'ee par cette involution. (Cela signifie que $\pi$ est \'equivalente \`a sa contragr\'ediente dans les cas symplectique et orthogonal, \`a la contragr\'ediente de sa compos\'ee avec la conjugaison de $F$ sur $F_0$
 dans le cas unitaire.) 
Si aucune des repr\'esentations $\pi |\det\!|^s$ n'est autoduale on sait que la repr\'esentation induite est toujours irr\'eductible (\cite[Corollary 1.8]{S} ; voir aussi \cite[Lemmas 4.1 and 5.1]{Z}).

Supposons $\pi$ autoduale et 
faisons les hypoth\`eses suivantes   :  
\begin{enumerate} 
\item L'alg\`ebre $\mathcal H(G, \lambda )$ a deux g\'en\'erateurs 
$T_0$ et $T_1$ v\'erifiant des relations quadratiques de la forme 
 $ \  
(T_i - \omega_i^1)(T_i - \omega_i^2)=0 
\quad (i=0,1) $ et on a 
$ t(Z) = T_0 T_1$. 
\item  Les quotients des valeurs propres sont des nombres r\'eels strictement n\'egatifs. On ordonne alors les valeurs propres de fa\c con que  : 
$ \   \omega_i^1 / \omega_i^2 = -q^{r_i} $ avec $r_i \ge 0$.     
\end{enumerate}
Soit   $\xi$   un  caract\`ere  de
  $\mathcal H(M, \lambda_n \otimes \lambda_0)$. La repr\'esentation $t_\ast(\xi) $ est de dimension $2$, elle  est r\'eductible si et seulement si elle contient un caract\`ere, 
  c'est-\`a-dire si et seulement si 
   il existe un caract\`ere 
$\alpha$ de 
 $\mathcal H(G, \lambda)$ tel que  
 $\xi = \alpha \circ t$ \cite[Proposition 1.13]{BB}.  Ainsi la r\'eductibilit\'e de  $ \  \ind_P^G \, \pi |\det\!|^s \otimes \sigma $ est obtenue exactement pour quatre valeurs de $s$ (compt\'ees avec multiplicit\'e), donn\'ees par 
 $ \mathfrak h \, (\pi |\det\!|^s) (Z) = \omega_0^j \omega_1^k,$ $ j, k = 1,2 $, 
 c'est-\`a-dire  
\begin{equation}\label{4car}
 \mathfrak h \, (\pi |\det\!|^s) (Z) \ \in \  \{ \alpha_0 = \omega_0^2 \omega_1^2, \ \  -q^{r_0}  \alpha_0 , \  \  -q^{r_1}  \alpha_0, \  \    q^{r_0+r_1}  \alpha_0\} .
 \end{equation}

On sait par ailleurs que $\mathcal R^{[\pi , \GL(n,F)]}  (\GL(n,F))$ contient exactement 
deux repr\'esentations irr\'eductibles autoduales non \'equivalentes :   $ \pi \  $ et $ \  \pi  \  |\det|^{\frac{e}{2n} \frac{2i\pi}{\log q}}$, 
pour un unique entier $e$ divisant $n$, qui est l'indice de ramification de $E_n$ 
sur $F$  \cite[Lemma 6.2.5]{BK}. 
  On sait enfin \cite[Theorem 1.6]{S} que
    pour chaque repr\'esentation supercuspidale autoduale $\pi'$ de 
$\GL(n,F)$  l'ensemble des points de r\'eductibilit\'e {\it r\'eels}  de $ \  \ind_P^G \, \pi' |\det\!|^s \otimes \sigma $  
est soit vide, soit de la forme $\{a, -a\}$ pour un r\'eel positif ou nul $a$. 
Ainsi il existe au maximum quatre repr\'esentations donnant lieu \`a r\'eductibilit\'e 
 (compt\'ees avec multiplicit\'e), \`a savoir : 
$  \pi \circ |\det|^{\pm s_1}$ et  
  $\pi \circ |\det|^{\pm s_2 + \frac{e}{2n} \frac {2i \pi}{\log q} } $, 
 pour un $s_1 \in \mathbb R$ et un  
  $s_2 \in \mathbb R $,  
 dont les images par $\mathfrak h$ sont, via (\ref{caract}), les caract\`eres ayant pour valeur en $Z$ un \'el\'ement de l'ensemble : 
 \begin{equation}\label{4rep} 
  \{  q^{s_1 \frac n e } \  \mathfrak h \, (\pi  ) (Z), q^{-s_1 \frac n e } \ \mathfrak h \, (\pi  ) (Z), - q^{s_2 \frac n e } \  \mathfrak h \, (\pi  ) (Z), - q^{s_2 \frac n e } \ \mathfrak h \, (\pi  ) (Z)  \} . 
 \end{equation}

 Sous les hypoth\`eses (i) et (ii), un sous-ensemble $X$ convenable de (\ref{4rep}) doit co\"\i ncider avec (\ref{4car}). Comme (\ref{4car})  contient des paires d'\'el\'ements de quotient n\'egatif, le sous-ensemble $X$ doit avoir la m\^eme propri\'et\'e : c'est   (\ref{4rep}) tout entier. Les hypoth\`eses faites entra\^\i nent donc que les induites consid\'er\'ees ont effectivement des points de r\'eductibilit\'e. En comparant les quotients r\'eels positifs de valeurs de   (\ref{4car}) et  (\ref{4rep}) on obtient la formule suivante : 
 
\begin{Proposition}\label{formule} 
Supposons que l'alg\`ebre $\mathcal H(G, \lambda )$ a deux g\'en\'erateurs 
 v\'erifiant des relations quadratiques de la forme 
 $ \  
(T_i - \omega_i^1)(T_i - \omega_i^2)=0 
\  (i=0,1) $ dont les 
  quotients des valeurs propres s'\'ecrivent 
$ \   \omega_i^1 / \omega_i^2 = -q^{r_i} $ avec $r_i \ge 0$, et  tels que $ t(Z) = T_0 T_1$.  
Les param\`etres $r_0$, $r_1$  d\'eterminent alors les parties r\'eelles des points de r\'eductibilit\'e de $ \  \ind_P^G \, \pi |\det\!|^s \otimes \sigma $ : ce sont les \'el\'ements de l'ensemble 
 $$ \{ \  \pm \frac{e}{2n} \, (r_0+r_1),  \ \pm \frac{e}{2n} \, (r_0-r_1) \  \} 
.$$ 
\end{Proposition}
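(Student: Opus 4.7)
The plan is to use the equivalence of categories in \eqref{diagBK} to translate reducibility of $\ind_P^G \pi |\det|^s \otimes \sigma$ into reducibility of the two-dimensional $\mathcal H(G,\lambda)$-module $t_*\mathfrak h(\pi|\det|^s \otimes \sigma)$, and then to match the four possible reducibility points predicted by the two quadratic relations against the four autodual twists listed in \eqref{4rep}.

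I would first observe that, by the choice of $(J_0,\lambda_0)$ with $\sigma \simeq \Ind_{J_0}^L \lambda_0$, the module $\xi := \mathfrak h(\pi|\det|^s \otimes \sigma)$ is a character of $\mathcal H(M,\lambda_n\otimes \lambda_0) \simeq \mathbb C[Z,Z^{-1}]$, fully determined by $\xi(Z)$. Under hypothesis (i), $t_*(\xi)$ has dimension $2$, and by \cite[Proposition 1.13]{BB} it is reducible iff $\xi$ extends to a character $\alpha$ of $\mathcal H(G,\lambda)$; such an $\alpha$ must send each $T_i$ to some $\omega_i^{j_i}$, and $t(Z)=T_0T_1$ then forces $\xi(Z) = \omega_0^{j_0}\omega_1^{j_1}$. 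With the normalisation $\omega_i^1/\omega_i^2 = -q^{r_i}$ this yields the four candidate values $\{\alpha_0, -q^{r_0}\alpha_0, -q^{r_1}\alpha_0, q^{r_0+r_1}\alpha_0\}$ recorded in \eqref{4car}.

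On the representation side, the autodual representations in the inertial class of $\pi$ are $\pi$ and $\pi|\det|^{\frac{e}{2n}\frac{2i\pi}{\log q}}$ by \cite[Lemma 6.2.5]{BK}, and by Silberger's theorem the real reducibility points attached to each autodual twist come in symmetric pairs $\{\pm a\}$. Combined with \eqref{caract}, this produces the at-most-four candidate values of $\xi(Z)$ listed in \eqref{4rep}, depending on parameters $s_1,s_2\in\mathbb R$. Since \eqref{4car} already contains two pairs of entries with negative ratio, all four of its values must genuinely correspond to reducibility points, so the multisets \eqref{4car} and \eqref{4rep} coincide.

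It then remains to read off $s_1,s_2$ by comparing positive real ratios. Inside \eqref{4car} one finds the ratios $q^{r_0+r_1}$ (between $\alpha_0$ and $q^{r_0+r_1}\alpha_0$) and $q^{|r_0-r_1|}$ (between the two negative entries $-q^{r_0}\alpha_0$ and $-q^{r_1}\alpha_0$); the matching ratios in \eqref{4rep} are $q^{2s_1 n/e}$ and $q^{2s_2 n/e}$. Equating them gives $s_1 = \frac{e}{2n}(r_0+r_1)$ and $s_2 = \frac{e}{2n}|r_0-r_1|$, so the real parts of the reducibility points are exactly $\pm\frac{e}{2n}(r_0\pm r_1)$, as announced. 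The only delicacy is combinatorial: verifying that the pairing between \eqref{4car} and \eqref{4rep} is forced — that is, that the two negative-ratio pairs in \eqref{4car} must correspond to the two distinct autodual twists rather than to a single twist's $\pm a$ pair — but this is immediate from $r_0,r_1\ge 0$ and the sign pattern of the Hecke eigenvalues.
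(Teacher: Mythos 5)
Your proposal is correct and follows essentially the same route as the paper: the discussion preceding the proposition in \S\ref{rqpr} establishes exactly the four candidate values (\ref{4car}) via \cite[Proposition 1.13]{BB}, matches them against (\ref{4rep}) using the presence of negative-ratio pairs to force the identification, and reads off $r_0+r_1$ and $|r_0-r_1|$ from the positive real quotients. Your remark on the combinatorial "delicacy" of the pairing is handled in the paper by the same observation you make, namely that the sign pattern forces the whole of (\ref{4rep}) to occur.
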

 
 \begin{Remark}\label{piexacte}
 {\rm 
Cette proposition ne permet pas de d\'eterminer s\'epar\'ement les deux points de r\'educ\-ti\-bi\-lit\'e r\'eels 
correspondant \`a chacune des  deux repr\'esentations autoduales  $ \pi \  $ et $ \  \pi  \  |\det|^{\frac{e}{2n} \frac{2i\pi}{\log q}}$. Elle ne fournit que l'ensemble total des parties r\'eelles des points de r\'educ\-ti\-bi\-lit\'e pour une repr\'esen\-tation autoduale de la classe d'inertie de $\pi$.}
 \end{Remark} 
 
 \begin{Remark} 
 {\rm Lorsque la structure de $\mathcal H(G, \lambda )$ est donn\'ee par un \'enonc\'e semblable au corollaire  \ref{dernier} ci-dessus, l'hypoth\`ese (i) est automatiquement 
 v\'erifi\'ee. L'hypoth\`ese (ii) l'est  aussi par \cite[\S 6.7]{M}, apr\`es 
 \cite{HL} :  le quotient des valeurs propres est $-p^{\, c}$, i.e. l'oppos\'e du quotient des degr\'es des deux repr\'esentations irr\'eductibles formant l'induite de la cuspidale dans le groupe r\'eductif fini. }
 \end{Remark} 
 
 \begin{Remark} 
 {\rm 
Pour des repr\'esentations $\sigma$ et $\pi$ dont les types v\'erifient les hypoth\`eses du paragraphe \ref{pc} (voir \`a ce sujet la remarque \ref{sigmapi} plus loin) on retrouve  les r\'esultats de r\'eductibilit\'e connus par ailleurs. En effet, si $G$ est un groupe symplectique ou unitaire   les \'enonc\'es de ce paragraphe  se simplifient (car $p=  1$ et $  s_1$ appartient \`a $G_E$) et, compl\'et\'es par la proposition \ref{formule}, fournissent l'irr\'eductibilit\'e de  $ \  \ind_P^G \, \pi |\det\!|^s \otimes \sigma $ si aucune des repr\'esentations $\pi |\det\!|^s $ n'est autoduale,  
 et sa r\'eductibilit\'e pour un unique $s \ge 0$ si $\pi$ est autoduale. 
 Il en est de  m\^eme si $G$ est 
 sp\'ecial orthogonal en dimension impaire : le seul cas o\`u   $  s_1$ n'appartient pas \`a $G_E$ est celui  o\`u $\b_1=0$, les espaces $V^2, \cdots, V^l$ sont  alors de dimension paire,  car chacun porte  une strate simple gauche non nulle, donc 
 $W^{(0)} \cap V^1$ est de dimension impaire d'o\`u  $p=-1$ et $ps_1$ appartient \`a $G_E$.

 Le cas du groupe sp\'ecial orthogonal en dimension paire est plus d\'elicat.  L'irr\'eductibilit\'e de  $ \  \ind_P^G \, \pi |\det\!|^s \otimes \sigma $ si aucune des repr\'esentations $\pi |\det\!|^s $ n'est autoduale est donn\'ee par le corollaire \ref{cover}. 
 Via les lemmes de Zhang cit\'es plus haut, on est assur\'es de l'existence de points de r\'eductibilit\'e r\'eels, pour $\pi$ autoduale, dans tous les cas sauf le suivant : 
 $G$ est un groupe sp\'ecial orthogonal et $n$ est impair.  De fait, si $n$ est pair, $s_1$ appartient \`a $G_E$, $p=1$ et l'on retrouve les situations d\'ecrites plus haut. 
Supposons $n$ impair ;   l'autodualit\'e de $\pi$ entra\^\i ne $n=1$ par 
 \cite[Theorem 1]{B}. Le cas d'exception est donc celui de $\GL_1(F) \times \SO(2t, F)$, Levi de $  \SO(2t+2, F)  $. D'apr\`es la proposition \ref{support} il existera  des points de r\'eductibilit\'e si et seulement si $p$
	 entrelace   $\tau^{(0)}$ 
   et $ps_1$  appartient   \`a $G_E$ -- en particulier il n'en existe pas si 
    $W^{(0)} \cap V^1 = \{ 0 \}$. On pourrait comparer cette condition avec celle de 
    Jantzen \cite{J} selon laquelle l'induite a des points de r\'eductibilit\'e si et seulement si $\sigma$ est \'equivalente \`a sa conjugu\'ee   par un \'el\'ement du groupe orthogonal de d\'eterminant $-1$ (si  $W^{(0)} \cap V^1 = \{ 0 \}$ il est facile de voir 
que     cette condition n'est jamais satisfaite). Ce n'est pas notre propos ici. 

 Ces questions de r\'eductibilit\'e ont \'et\'e beaucoup \'etudi\'ees, en premier lieu par Shahidi 
 \cite[Theorem 8.1]{Sh} ;     pour le cas d'exception citons par exemple \cite{Mg}, \cite{Z}, \cite{J}...
 }
 \end{Remark}

\subsection{Conclusion}\label{conclusion}  

Passons enfin \`a l'\'etude de la r\'eductibilit\'e  sous les hypoth\`eses communes aux   paragraphes  \ref{pc} et \ref{2.4}, c'est-\`a-dire   
une d\'ecomposition autoduale 
  $V = W^{(-1)} \oplus W^{(0)}\oplus W^{(1)} $ exactement subordonn\'ee \`a $[\L ,n ,0,\b ]$, avec  $W^{(0)} =  V^2 \perp \cdots \perp V^{l}$ et 
$ W^{(-1)}  \oplus W^{(1)}   = V^1$. On pose 
$Z_1 = W^{(-1)}  \oplus  W^{(1)}$, $Z_2 = W^{(0)}$,  
comme au paragraphe \ref{2.4}, et on note $G_1^+$ (resp. $G_{(0)}^+$) le groupe d'automorphismes de la restriction de la forme $h$ \`a $Z_1$ (resp. $Z_2$), $G_1$ (resp. $G_{(0)}$) sa composante neutre, 
de sorte que $G_\interieur^+ = G_{(0)}^+ \times G_1^+$.

Soient $P $ et $M$ comme  en \ref{pc} et $P_1$, $M_1$ leurs intersections avec $G_1$. 
Le corollaire \ref{cover} nous d\'ecrit une paire couvrante $(J_P , \lambda_P)$, dans $G$, de la paire  
$(  {J}^{(0)}\times \iota(  \widetilde {J}^{(1)} ), \  
\lambda^{(0)} \otimes \iota(\tilde \lambda^{(1)}))$ dans~$M = G_{(0)} \times \iota(\GL_F(W^{(1)}))$.
Supposons que  $I_{G^{(0)}}(\lambda^{(0)}) = {J}^{(0)}$ ;      la paire  $(  {J}^{(0)} , \  
\lambda^{(0)}  )$ est alors  un type pour la classe d'inertie de la repr\'esentation supercuspidale 
$\sigma = \cInd_{{J}^{(0)}}^{G_{(0)}} \lambda^{(0)} $ de $G_{(0)}$ (\cite[Corollary 6.19]{S5} ;   
$\kappa^{(0)}$ est tordue d'une $\b$-extension par un caract\`ere, comme dans la d\'emonstration de la proposition  \ref{support}). 
D'autre part   $\mathfrak b_0(\L^{(1)})$ est un ${\mathfrak o}_{E_1}$-ordre maximal de 
$\End_{E_1}(W^{(1)}) $ donc
$(    \widetilde {J}^{(1)}  , \  
 \tilde \lambda^{(1)} )$ est un type pour une classe d'inertie de   repr\'esentations supercuspidales de $\GL_F(W^{(1)})$ \cite[Theorem 6.2.2]{BK} dont on note $\pi$ un \'el\'ement.   Le diagramme   \ref{diagBK} relie alors l'\'etude de r\'eductibilit\'e des induites  
$ \  \Ind_P^G \, \pi |\det\!|^s \otimes \sigma $ \`a la structure de $\mathcal H(G, \lambda_P )$.

Notre but est de    comparer les parties r\'eelles des points de   r\'eductibilit\'e des induites paraboliques      $ \  \Ind_P^G \, \pi |\det\!|^s \otimes \sigma $ dans $G$  
aux parties r\'eelles des points de   r\'eductibilit\'e  des   induites paraboliques      $ \  \Ind_{P_1}^{G_1} \, \pi |\det\!|^s  $ 
dans $G_1$. Dans $G_1$ nous avons une situation tout \`a fait analogue 
c'est-\`a-dire une paire couvrante $(J_{P_1}(\L^1), \l_{P_1})$ de  la paire $(    \iota(  \widetilde {J}^{(1)} ), \  
  \iota(\tilde \lambda^{(1)}))$ dans $ M_1$ (ce qui d\'efinit $\l_{P_1}$).

 D'apr\`es le corollaire \ref{cover}, si la repr\'esentation $\widetilde\lambda^{(1)} $ 
    n'est pas \'equivalente \`a $\widetilde\lambda ^{(1)} \circ \sigma_1$ 
    ou si $s_1$ n'appartient pas \`a $G_E$,  les alg\`ebres 
  $\mathcal H(G, \lambda_P )$ et $\mathcal H(G_1, \l_{P_1} )$ sont isomorphes  
     \`a $  \mathcal H(M, {\lambda_P}_{|J\cap M})$ et $  \mathcal H(M_1, {\lambda_{P_1}}_{|J_{P_1}(\L^1) \cap M_1})$ respectivement et les induites $ \  \Ind_P^G \, \pi |\det\!|^s \otimes \sigma $ et    $ \  \Ind_{P_1}^{G_1} \, \pi |\det\!|^s  $  sont toujours irr\'eductibles. Le premier cas est celui  o\`u la classe d'inertie de $\pi$ ne contient pas de repr\'esentation auto\-duale et l'on retrouve le fait qu'une repr\'esentation supercuspidale autoduale poss\`ede un type autodual. 
     
   Pla\c cons-nous  d\'esormais dans le cas autodual o\`u  $\widetilde\lambda^{(1)} $ 
     est  \'equivalente \`a $\widetilde\lambda ^{(1)} \circ \sigma_1$, et supposons que 
     $s_1$  appartient  \`a $G_E$.  
L'\'enonc\'e du corollaire \ref{dernier}, dont on reprend les notations,  
  peut \^etre pr\'ecis\'e en ne faisant intervenir que les intersections avec $G_1$. En effet 
  $P  (  \mathfrak M_{0 ,\oE} )$ est \'egal au produit 
  $P  (  \mathfrak M_{0 ,\oE}^{(0)} ) \times P  (  \mathfrak M_{0 ,\oE}^1 )$, de m\^eme pour 
  $P  (  \mathfrak M_{1 ,\oE} )$ et  $P  (  \L\oE  )$. Comme $\mathfrak b_0(\L^{(0)})$ est un $\oE$-ordre autodual maximal de 
$\End_F(W^{(0)})\cap B$ on a $P  (  \mathfrak M_{0 ,\oE}^{(0)} )=  P  (  \mathfrak M_{1 ,\oE}^{(0)} )= P  (  \L_\oE^{(0)} )$, de sorte que (cf. (\ref{plusplus})) les alg\`ebres :
   $$
   \aligned 
   &\mathcal H(P  (  \mathfrak M_{0 ,\oE} ), \rho  \otimes \chi_0 ) \  \simeq \ 
   \mathcal H(P  (  \mathfrak M_{0 ,\oE}^1 ), \iota(\rho^{(1)}  \otimes \chi_0^{(1)}) )
  \\  \text{ et } \ 
   &\mathcal H(P  (  \mathfrak M_{1 ,\oE} ), \rho  \otimes \chi_1 ) \  \simeq \  
   \mathcal H(P  (  \mathfrak M_{1 ,\oE}^1 ), \iota(\rho^{(1)}  \otimes \chi_1^{(1)}) )  
   \endaligned 
   $$
  d\'eterminent les relations quadratiques satisfaites par les g\'en\'erateurs de $\mathcal H(G, \lambda_P )$. 
De la m\^eme fa\c con, les relations quadratiques satisfaites par les g\'en\'erateurs de $\mathcal H(G_1, \l_{P_1} )$ sont d\'etermin\'ees par les alg\`ebres 
$$  
   \mathcal H(P  (  \mathfrak M_{0 ,\oE}^1 ), \iota(\rho^{(1)}  \otimes \chi_{1,0}^{(1)}) ) \ 
  \text{ et } \  
   \mathcal H(P  (  \mathfrak M_{1 ,\oE}^1 ), \iota(\rho^{(1)}  \otimes \chi_{1,1}^{(1)}) )  
   $$
o\`u  $\chi_{1,0}$ et $\chi_{1,1}$ sont des caract\`eres de $P^+(\L_\oE^1)/P^1(\L_\oE^1)$ tels que 
$\kappa_1 \otimes \chi_{1,0}^{-1}$ soit une $\beta$-extension   relative \`a $ \mathfrak M_{0  }^1$ 
et $\kappa_1 \otimes \chi_{1,1}^{-1}$ une $\beta$-extension   relative \`a $ \mathfrak M_{1 }^1$.

Dans les deux cas il s'agit d'alg\`ebres $ \mathcal H(\mathcal G, \gamma )$ relatives \`a une repr\'esentation supercuspidale 
auto\-duale $\gamma$ du sous-groupe de Levi de Siegel d'un groupe r\'eductif fini $\mathcal G$, soit
$$P (\L_\oE^1)/P^1(\L_\oE^1)\simeq \widetilde P (\L_\oE^{(1)})/\widetilde P^1(\L_\oE^{(1)})
\simeq \GL(f,k_{E_1})
$$
o\`u $  k_{E_1}$ est le corps r\'esiduel de $E_1$ et $f $ la dimension de $W^{(1)}$ sur $E_1$.  
(On rappelle comme au paragraphe \ref{rqpr} que la notion d'autodualit\'e correspond ici \`a 
$\gamma \circ \sigma_1 \simeq\gamma$, et que l'action de $\sigma_1$ sur $\GL(f,k_{E_1})$ 
est \'equivalente \`a $g \mapsto { }^t g^{-1}$ si $\beta_1=0$ ou si $E_1 $ est   ramifi\'ee sur $E_1^0$,  \`a $g \mapsto { }^t \bar g^{-1}$ si  $E_1 $ est  non ramifi\'ee sur $E_1^0$, la barre d\'esignant alors l'action de l'\'el\'ement non trivial du groupe de Galois de 
$k_{E_1}$ sur $k_{E_1^0}$.) 
Les relations quadratiques correspondantes peuvent \^etre calcul\'ees au cas par cas \`a partir des travaux de Lusztig. Quoi qu'il en soit, il arrive que ces relations soient ind\'ependantes 
du choix de la repr\'esentation cuspidale autoduale $\gamma$. Lorsque c'est le cas, la proposition  \ref{formule} implique que les parties r\'eelles des points de r\'eductibilit\'e de $ \  \Ind_P^G \, \pi |\det\!|^s \otimes \sigma $ et       $ \  \Ind_{P_1}^{G_1} \, \pi |\det\!|^s  $ sont les m\^emes. 

\begin{Example}\label{exemple} {\rm 
  Si $E_1 $ est  non ramifi\'ee sur $E_1^0$ et de degr\'e maximal 
$[E_1:F]= \dim_F W^{(1)}$, le groupe $ \mathcal G$ est isomorphe dans les deux cas \`a $U(1,1)(k_{E_1}/k_{E_1^0})$. Les relations quadratiques sont toujours homoth\'etiques \`a 
$X^2 = (q_{E_1} -1 ) X + q_{E_1}$, avec quotient des valeurs propres $-q_{E_1}$. 
L'application de la proposition 
\ref{formule} donne pour parties r\'eelles des points de r\'eductibilit\'e $0$ (double) et $\pm \frac 1 2 $. 
 Si $E_1 $ est  ramifi\'ee sur $E_1^0$ et de degr\'e maximal, 
 le groupe $ \mathcal G$ est isomorphe  \`a $\SL(2,k_{E_1 }))$ ou  $O(1,1)(k_{E_1 }))$. Dans $\SL(2,k_{E_1 }))$ la relation quadratique est soit $X^2 = (q_{E_1} -1 ) X + q_{E_1}$, soit $X^2=1$, avec quotient des valeurs propres $-q_{E_1}$ ou $-1$, elle est sensible \`a la torsion ; dans 
$O(1,1)(k_{E_1 }))$ c'est $X^2=1$. Les parties r\'eelles des points de r\'eductibilit\'e sont soit $0$ (double) et $\pm \frac 1 2 $, soit $0$ (quadruple). 

Dans ces deux cas, pour toute repr\'esentation $\sigma$ v\'erifiant les hypoth\`eses du pr\'esent paragraphe (voir la remarque \ref{sigmapi}), les points de r\'eductibilit\'e de $ \  \Ind_P^G \, \pi |\det\!|^s \otimes \sigma $   sont   de partie  r\'eelle  strictement inf\'erieure  \`a $1$. 
}
\end{Example} 

\bigskip 

Si les relations {\it ne sont pas  ind\'ependantes } du choix de $\gamma$, la comparaison cherch\'ee impose de comparer les caract\`eres $\chi_0$ et $\chi_1$ d'une part, $\chi_{1,0}$ et $\chi_{1,1}$ d'autre part ; c'est l\`a qu'intervient le th\'eor\`eme \ref{theoreme}.
Soit  donc  $\mathbf w_U^0$ (resp.  $\mathbf w_{U^-}^1$) le caract\`ere de $P^+(\L_\oE )$, trivial sur $P^1(\L_\oE )$,     
  qui \`a $x \in P^+(\L_\oE  )\cap M $ 
associe la signature de la permutation  $v \mapsto xvx^{-1}$ de $ \ J^1_\ext(\mathfrak M_0) \cap U^- / \ H^1(\mathfrak M_0) \cap U^-$ 
(resp. $J^1_\ext(\mathfrak M_1) \cap U  / H^1(\mathfrak M_1) \cap U  $). Rappelons que ces caract\`eres ne d\'ependent que de la strate $[\L ,n ,0,\b ]$ (Lemme \ref{intersection}).

 Les hypoth\`eses du paragraphe \ref{2.4} sont v\'erifi\'ees par le sous-groupe parabolique $P$ et la suite de r\'eseaux 
$\mathfrak M_0$, ainsi que par $P^-$ et $\mathfrak M_1$   (\ref{plusplus}) et 
les repr\'esentations $\kappa$ et $\kappa_1$ sont reli\'ees par la condition :
   $$
   \mathbf r_{P}\left(\kappa  \right)= \mathbf r_{P} \left(
 \kappa^{(0)}\otimes \kappa_1\right) = \kappa^{(0)}\otimes \iota(\widetilde \kappa^{(1)})
 = \mathbf r_{P^-}\left(\kappa  \right)= \mathbf r_{P^-} \left(
 \kappa^{(0)}\otimes \kappa_1\right) .
 $$ 
D'apr\`es le th\'eor\`eme \ref{theoreme}, 
$\kappa \otimes \chi_0^{-1}$   est une $\beta$-extension   relative \`a $ \mathfrak M_{0  }$ 
si et seulement si 
$ \mathbf w_U^0 \chi_0^{-1}(\kappa^{(0)}\otimes \kappa_1  ) $ 
est une $\beta$-extension relative \`a $ \mathfrak M_{0  }$  ; de m\^eme 
  $\kappa \otimes \chi_1^{-1}$  est une $\beta$-extension   relative \`a    $ \mathfrak M_{1 }$ 
si et seulement si 
  $ \mathbf w_{U^-}^1 \chi_1^{-1}(\kappa^{(0)}\otimes \kappa_1  ) $  
est une $\beta$-extension relative \`a   $ \mathfrak M_{1 }$. 
Sur la composante en $W^{(0)}$ la torsion par $ \mathbf w_U^0$ ou $ \mathbf w_{U^-}^1$ ne change pas le fait d'\^etre une $\b$-extension car ces caract\`eres sont triviaux sur tous les sous-groupes unipotents \cite[Theorem 4.1]{S5}. La condition se simplifie donc comme suit : 

{\it 
 $\kappa \otimes \chi_0^{-1}$   est une $\beta$-extension   relative \`a $ \mathfrak M_{0  }$ 
si et seulement si $(\chi_0^{(0)})^{-1} \kappa^{(0)}$ est une $\b$-extension et 
$ \mathbf w_U^0 \chi_0^{-1}  \kappa_1    $ 
est une $\beta$-extension relative \`a $ \mathfrak M_{0  }^1$~; }

\noindent  et  de m\^eme 
en rempla\c cant $ \mathfrak M_{0  }$ par $ \mathfrak M_{1 }$ et $ \chi_0$ par  $\chi_1$. 
Notons au passage que $\chi_0$ et $\chi_1$ diff\`erent d'un caract\`ere autodual  
 \cite[Remark 6.12]{S5} tandis que $ \mathbf w_U^0$ 
 et  $\mathbf w_{U^-}^1 $ sont quadratiques ou triviaux. 
Finalement : 

\begin{Proposition}\label{w0}
Notons  $ \ \gamma_0= \iota(\rho^{(1)}  \otimes \chi_0^{(1)})$ et $ \ \gamma_1= \iota(\rho^{(1)}  \otimes \chi_1^{(1)})$. Les g\'en\'erateurs de 
 $\mathcal H(G, \lambda_P )$ se calculent dans 
$$\mathcal H(P  (  \mathfrak M_{0 ,\oE}^1 )/ P^1  (  \mathfrak M_{0 ,\oE}^1 ), \ \gamma_0 )
  \ \text{  et } \  
   \mathcal H(P  (  \mathfrak M_{1 ,\oE}^1 )/ P^1 (  \mathfrak M_{1 ,\oE}^1 ), \ \gamma_1 )$$ 
   tandis que ceux de $\mathcal H(G_1, \l_{P_1} )$ se calculent dans 
   $$\mathcal H(P  (  \mathfrak M_{0 ,\oE}^1 )/P^1(  \mathfrak M_{0 ,\oE}^1 ), \  \gamma_0  \ \mathbf w_U^0)
  \ \text{  et } \  
   \mathcal H(P  (  \mathfrak M_{1 ,\oE}^1 )/P^1(  \mathfrak M_{1 ,\oE}^1 ),  \  \gamma_1 \  \mathbf w_{U^-}^1),$$  
  o\`u   le   caract\`ere   $\mathbf w_U^0$ (resp.   $\mathbf w_{U^-}^1$)  de $P (\L_\oE^1 )/P^1(\L_\oE^1 )$ 
  associe   \`a $x \in P (\L_\oE^1  )  $ 
  la signature de la permutation  $v \mapsto xvx^{-1}$ de $ \ J^1_\ext(\mathfrak M_0^1) \cap U^- / \ H^1(\mathfrak M_0^1) \cap U^-$ (resp.   $ \ J^1_\ext(\mathfrak M_1^1) \cap U  / \ H^1(\mathfrak M_1^1) \cap U $). 
\end{Proposition}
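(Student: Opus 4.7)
The plan is to combine the description of $\mathcal H(G,\lambda_P)$ and of $\mathcal H(G_1, \l_{P_1})$ supplied by Corollary \ref{dernier} in the two groups with the $\beta$-extension comparison of Theorem \ref{theoreme}.

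First I apply Corollary \ref{dernier} in $G$: up to a suitable normalisation, the quadratic relations satisfied by the two generators $T_1$ and $T_1^\varpi$ of $\mathcal H(G,\lambda_P)$ are those satisfied by the generators of $\mathcal H(P(\mathfrak M_{0,\oE}), \rho \otimes \chi_0)$ and of $\mathcal H(P(\mathfrak M_{1,\oE}), \rho \otimes \chi_1)$. Using the product decomposition $P(\mathfrak M_{i,\oE}) = P(\L_\oE^{(0)}) \times P(\mathfrak M_{i,\oE}^1)$ (recall that $\fb_0(\L^{(0)})$ is already a maximal autodual $\oE$-order, so $P(\mathfrak M_{0,\oE}^{(0)}) = P(\mathfrak M_{1,\oE}^{(0)}) = P(\L_\oE^{(0)})$) together with the factorisation $\rho = \rho^{(0)} \otimes \iota(\rho^{(1)})$, these Hecke algebras are tensor products in which the factor over $W^{(0)}$ is one-dimensional. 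Hence the quadratic relations are already encoded in $\mathcal H(P(\mathfrak M_{0,\oE}^1), \gamma_0)$ and $\mathcal H(P(\mathfrak M_{1,\oE}^1), \gamma_1)$ respectively. The same argument carried out in $G_1$ places the relations for $\mathcal H(G_1, \l_{P_1})$ inside $\mathcal H(P(\mathfrak M_{i,\oE}^1), \iota(\rho^{(1)} \otimes \chi_{1,i}^{(1)}))$ for the analogous characters $\chi_{1,0}^{(1)}, \chi_{1,1}^{(1)}$ attached to $\kappa_1$.

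The next step is to compare the pairs $(\chi_0^{(1)}, \chi_1^{(1)})$ and $(\chi_{1,0}^{(1)}, \chi_{1,1}^{(1)})$ via Theorem \ref{theoreme}. The hypotheses of \S\ref{2.4} are satisfied by $(P, \mathfrak M_0)$ and by $(P^-, \mathfrak M_1)$ thanks to (\ref{plusplus}), and by construction $\mathbf r_{P}(\kappa) = \mathbf r_{P}(\kappa^{(0)} \otimes \kappa_1)$, with the symmetric statement for $P^-$. Applied to $\mathfrak M_0$ with parabolic $P$, Theorem \ref{theoreme} says that $\kappa \otimes \chi_0^{-1}$ is a $\b$-extension of $\eta$ relative to $\mathfrak M_0$ if and only if $\mathbf w_U^0 \chi_0^{-1}(\kappa^{(0)} \otimes \kappa_1)$ is a $\b$-extension relative to $\mathfrak M_0$, and analogously for $(\mathfrak M_1, P^-, \mathbf w_{U^-}^1)$.

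The final step is to decompose these conditions componentwise over $G_\interieur^+ = G_{(0)}^+ \times G_1^+$: the $\b$-extension property for a product representation splits as the conjunction of the $\b$-extension properties on the two factors (tautological from the definitions of \S\ref{remarquables}). Since $\fb_0(\L^{(0)})$ is already a maximal autodual $\oE$-order, the restriction of $\mathbf w_U^0$ to $P^+(\L_\oE^{(0)})$ -- being trivial on all unipotent subgroups -- preserves the $\b$-extension property on the $W^{(0)}$-factor by Theorem 4.1 of \cite{S5}; only the $W^{(1)}$-factor contributes, yielding the identifications $\chi_{1,0}^{(1)} = \mathbf w_U^0 \, \chi_0^{(1)}$ and $\chi_{1,1}^{(1)} = \mathbf w_{U^-}^1 \, \chi_1^{(1)}$ on $P(\L_\oE^1)/P^1(\L_\oE^1)$. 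Since twisting the representation defining a Hecke algebra by a character produces the isomorphic algebra with twisted coefficients, the four identifications of the proposition follow. The main delicacy in this plan lies in identifying the restrictions of $\mathbf w_U^0$ and $\mathbf w_{U^-}^1$ to $P(\L_\oE^1)/P^1(\L_\oE^1)$ with the signature characters stated in the proposition: this reduces to Lemma \ref{intersection} (which ensures that $J^1_\ext$ depends only on the strata and not on $\theta$) together with the fact that the subordinate decomposition $V = W^{(-1)} \oplus W^{(0)} \oplus W^{(1)}$ is disjoint with respect to the splitting $V = Z_1 \perp Z_2$.
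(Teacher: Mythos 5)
Your proposal is correct and follows essentially the same route as the paper's own argument (which is carried out in the discussion of \S\ref{conclusion} preceding the statement): reduction of Corollaire \ref{dernier} to the $W^{(1)}$-factor via the product decomposition of $P(\mathfrak M_{i,\oE})$, comparison of the twisting characters through the th\'eor\`eme \ref{theoreme} applied to $(P,\mathfrak M_0)$ and $(P^-,\mathfrak M_1)$, and elimination of the $W^{(0)}$-component using the triviality of $\mathbf w_U^0$ and $\mathbf w_{U^-}^1$ on unipotent subgroups. The only point worth noting is that the quadratic nature of the signature characters (so that $(\mathbf w_U^0)^{-1}=\mathbf w_U^0$) is what makes your identifications $\chi_{1,0}^{(1)}=\mathbf w_U^0\,\chi_0^{(1)}$ come out with the sign as stated, which the paper records explicitly.
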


\begin{Remark}\label{sigmapi} {\rm 
Bien entendu ce r\'esultat  ne permet pas de traiter la r\'eductibilit\'e de 
$ \  \Ind_P^G \, \pi |\det\!|^s \otimes \sigma $ pour n'importe quelle paire 
$(\pi, \sigma)$, puisque les types attach\'es \`a $\pi$ et $\sigma$ ont \'et\'e 
construits \`a partir des \'el\'ements suivants  : 
\begin{itemize}
	\item 
  une strate semi-simple gauche 
$[\L ,n ,0,\b ]$ 
et   une d\'ecomposition autoduale  exactement subordonn\'ee \`a cette strate $V = W^{(-1)} \oplus W^{(0)}\oplus W^{(1)} $    telle que $V^1 = W^{(-1)}  \oplus W^{(1)} $  ; 
\item 
  un caract\`ere semi-simple gauche $\theta$ dont la restriction $\theta^{(0)} \otimes \iota(\tilde \theta^{(1)})$  \`a 
$H^1 (\L)\cap M$ 
d\'etermine   un caract\`ere semi-simple gauche $ \theta^{(0)} $ intervenant dans 
$\sigma$ et un caract\`ere  simple  $\tilde  \theta^{(1)} $ intervenant dans 
$\pi$. 
\end{itemize}
On ne peut l'appliquer qu'\`a  des paires de repr\'esentations poss\'edant  
  des strates et caract\`eres semi-simples sous-jacents  au-dessus desquels il existe 
  une strate semi-simple et un caract\`ere semi-simple 
  convenablement d\'ecompos\'es comme ci-dessus, ce qui peut imposer des conditions de compatibilit\'e entre les caract\`eres $\theta^{(0)}$ et $ \iota(\tilde \theta^{(1)})$ 
  (voir la d\'efinition des caract\`eres semi-simples   \cite[Definition 3.13]{S4} et la notion de ``common approximation'' de \cite[\S 8]{BK2}). 
  
  Pr\'ecisons ceci, avec les notations du paragraphe \ref{pc}. On choisit un type pour la repr\'esen\-ta\-tion supercuspidale autoduale $\pi$ avec strate simple sous-jacente $[\L^{(1)} ,n^{(1)} ,0,2\b^{(1)} ]$ et ca\-rac\-t\`ere   simple  sous-jacent  $\tilde  \theta^{(1)} $. On peut  sommer la strate $[\L^{(1)} ,n^{(1)} ,0, \b^{(1)} ]$  et la strate duale dans 
  $ W^{(-1)} $ de fa\c con \`a obtenir une strate simple gauche $[\L^1 ,n^1 ,0,\b^1 ]$ 
  dans  $V^1$ \`a laquelle  $V^1 = W^{(-1)}  \oplus W^{(1)} $ soit exactement subordonn\'ee 
  \cite{B} \cite{GKS}, avec $\b^1 = \b^{(-1)} \oplus \b^{(1)}$. On choisit par ailleurs 
  un type pour $\sigma$ avec strate semi-simple gauche sous-jacente  $[\L^{(0)} ,n^{(0)} ,0, \b^{(0)} ]$ et caract\`ere semi-simple gauche $ \theta^{(0)} $. On se ram\`ene au cas o\`u 
 $\L^1$ et $\L^{(0)}$ ont m\^eme p\'eriode. La proposition \ref{w0} s'applique alors dans les cas suivants : $n^1= n^{(0)} $ et les polyn\^omes ca\-rac\-t\'eristiques des strates $[\L^1 ,n^1 ,0,\b^1 ]$ et  $[\L^{(0)} ,n^{(0)} ,0, \b^{(0)} ]$  sont premiers entre eux ; 
 $n^1 = 0$ et  $\b^{(0)}$ est inversible ; $n^{(0)} = 0$ et  $\b^1$ est inversible. 
 Si les strates sont moins ``disjointes'', un examen plus approfondi est n\'ecessaire pour d\'eterminer si  $\tilde  \theta^{(1)} $ et $ \theta^{(0)} $ v\'erifient les conditions d'existence d'un caract\`ere $\theta$ comme ci-dessus. 
  
  En particulier ce r\'esultat s'applique toujours si la strate sous-jacente \`a $\pi$ 
  est nulle et   $\b^{(0)}$  inversible. En revanche le cas d'un caract\`ere autodual $\pi$ de $GL(1,F)$ et d'une repr\'esentation $\sigma$ attach\'ee \`a une strate semi-simple dont une composante simple est nulle 
  (ce qui est automatique dans un groupe sp\'ecial orthogonal impair) ne rel\`eve pas de la proposition \ref{w0}. 
  }
 \end{Remark}

 \subsection{Exemples}\label{exemples}  
 Commen\c cons par  illustrer  la signification de la proposition \ref{w0} \`a partir 
    d'exemples bas\'es sur les calculs de paires couvrantes et d'alg\`ebres de Hecke d\'ej\`a faits dans $\Sp(4,F)$ \cite{BB} ; le groupe   $G$ est ici   symplectique. 
    
Le cas des repr\'esentations supercuspidales autoduales $\pi$ de $\GL(2,F)$ vu comme sous-groupe de Levi de Siegel de $\Sp(4,F)$  \cite[Tableau (3.17)]{BB} est conforme \`a l'exemple \ref{exemple} ci-dessus. 
 Passons  au cas le plus int\'eressant, celui d'un caract\`ere quadratique ou tri\-vial $\tilde \chi$ de $\GL(1,F)$. Notons $\chi$ la restriction de $\tilde \chi$ \`a $\oFx$. La paire couvrante $(J_{P_1}(\L_1), \l_{P_1})$ dans $G_1 =\Sp(2,F)$  corres\-pondante est  tout simplement la paire $(I, \chi_0)$ o\`u 
 $I =\left(\begin{smallmatrix} \oFx & \oF \cr \pF & \oFx 
 \end{smallmatrix}\right) \cap G_1 $ est un sous-groupe d'Iwahori et 
 $\chi_0\left( \left(\begin{smallmatrix} a&b\cr c&d
 \end{smallmatrix}\right)\right)  = \chi(a),  \left(\begin{smallmatrix} a&b \cr c&d 
 \end{smallmatrix}\right)\in I$. Les deux sous-groupes parahoriques maxi\-maux contenant $I$ sont 
 $\mathcal P_0=\SL(2, \oF)$ et $\mathcal P_1= \left(\begin{smallmatrix} \oFx & \pF^{-1} \cr \pF & \oFx 
 \end{smallmatrix}\right) \cap G_1 $. La seule $\beta$-extension du caract\`ere trivial de 
$I^1 =  \left(\begin{smallmatrix} 1+ \pF & \oF \cr \pF & 1+ \pF
 \end{smallmatrix}\right) \cap G_1 $ \`a $I$ relativement \`a l'un ou l'autre de ces parahoriques est 
 le caract\`ere trivial. Notant encore $\chi_0$ le caract\`ere du sous-groupe parabolique image de $I$ dans le quotient r\'eductif de $P_0$ ou $P_1$, on voit par le corollaire \ref{dernier} 
 que les relations quadratiques d\'eterminant la r\'eductibilit\'e sont deux fois celle satisfaite par un g\'en\'erateur de $ \mathcal H(\SL(2, k_F), \chi_0 )$, soit 
 $(X+1)(X-q)=0 \ (A)$ ($r_0=r_1=1$) si $\chi$ est trivial sur $\oFx$, $(X+1)(X-1)=0 \ (B)$ ($r_0=r_1=0$) sinon, d'o\`u la structure bien connue pour $ \mathcal H(\SL(2,  F), \chi_0  )$ et les points de r\'eductibilit\'e 
 de parties r\'eelles $0$ et $1$ dans le premier cas, $0$ quatre fois dans le second, pour la repr\'esentation $ \  \Ind_{P_1 }^{\Sp(2,F)} \, \tilde\chi | \ |^s  $.  
 
 Occupons-nous maintenant des repr\'esentations induites 
 $ \  \Ind_{P }^{\Sp(4,F)} \, \tilde\chi | \ |^s \otimes \sigma $ o\`u $\sigma$ est une repr\'esentation supercuspidale de $\Sp(2,F)$. 
 L'examen des tableaux (2.17) p. 676 et 1 p. 677 de \cite{BB} appelle quelques remarques. 
\begin{enumerate}
	\item 
	Lorsque $\sigma$ est une repr\'esentation de la s\'erie non ramifi\'ee ``exceptionnelle'', i.e. induite \`a partir de l'inflation \`a $\SL(2,\oF)$ d'une repr\'esentation cuspidale de $\SL(2,k_F)$ qui ne se prolonge pas \`a $\GL(2,k_F)$, 
	et si $\chi$ n'est pas trivial, on trouve une relation dont le quotient des valeurs propres est $-q^2$. De fait, dans ce cas la strate sous-jacente \`a $\sigma$ est une strate nulle, la strate sous-jacente  \`a $\pi= \tilde \chi$ aussi, et nous ne sommes pas dans les conditions d'application de la proposition \ref{w0} (voir la remarque \ref{sigmapi}). 
	C'est pourquoi cette proposition ne permet de pr\'evoir ni ce cas, ni les autres cas dits ``de niveau $0$'' o\`u $\sigma$ est  induite   de l'inflation  d'une repr\'esentation cuspidale de $\SL(2,k_F)$ qui  se prolonge  \`a $\GL(2,k_F)$. 
	\item 
	Si $\chi$ est trivial, on trouve deux relations de type (A) si $\sigma$ est   de la s\'erie non ramifi\'ee et de niveau strictement positif, deux relations de type (B) si $\sigma$ est   de la s\'erie   ramifi\'ee~; si $\chi$ est non trivial, c'est exactement le contraire. Ces cas rel\`event de la proposition \ref{w0}, les 
  caract\`eres  $\mathbf w_U^0  $ et 
  $ \mathbf w_{U^-}^1$ sont  n\'ecessairement triviaux dans le cas non ramifi\'e et non triviaux dans le cas ramifi\'e.  
\end{enumerate}

 \medskip 
 
 Dans un groupe symplectique on a calcul\'e dans l'exemple \ref{exemplebeta} les caract\`eres 
 $\mathbf w_U^0$ et  $\mathbf w_{U^-}^1$
  dans le cas d'un 
   caract\`ere autodual $\tilde \chi$ de $\GL(1,F)$ et d'une repr\'esentation  $\sigma$  
   de $\Sp(2N, F)$  attach\'ee \`a un type simple maximal o\`u l'\'el\'ement $\b^{(0)}$ 
 engendre une extension totalement ramifi\'ee 
  et y est  de valuation congrue \`a $-1$ modulo 
  $\dim W^{(0)}$. Ces deux caract\`eres sont \'egaux au caract\`ere quadratique non trivial 
  de $\oFx$ (comme dans (ii) pour $N=1$). 
  La proposition \ref{w0} nous permet de comparer comme ci-dessus la r\'eductibilit\'e de 
  $ \  \Ind_{P  }^{\Sp(2N+2,F)} \,  \tilde \chi  | \ |^s  \otimes \sigma $ 
  \`a celle de  
  $ \  \Ind_{P_1 }^{\Sp(2,F)} \,  \tilde \chi  | \ |^s  $. On en d\'eduit 
     qu'il y a un unique   caract\`ere autodual $\tilde \chi$  de $F^\times$    tel 
    que  $ \  \Ind_{P  }^{\Sp(2N+2,F)} \,  \tilde \chi  | \ |^s  \otimes \sigma $ soit r\'eductible en $s=1$, et   que la restriction de ce caract\`ere \`a $\oFx$ est non triviale. Il s'agit donc, comme on s'y attend  par ailleurs, d'un caract\`ere quadratique ramifi\'e. 
 Cependant (voir remarque  \ref{piexacte}), la d\'etermination exacte de ce caract\`ere 
 parmi les deux caract\`eres quadratiques ramifi\'es n\'ecessite des calculs plus pouss\'es.

\begin{biblio}{}

\bibitem{BB}
L. Blasco et C. Blondel, {\it
Alg\`ebres de Hecke et s\'eries principales
g\'en\'era\-lis\'ees de $Sp_4(F)$}, \rm
\
Proc.   London Math. Soc.  \textbf{85(3)}, 2002, 659--685. \rm

\bibitem{B}
C. Blondel,  {\it  Sp(2N)-covers for self-contragredient supercuspidal representations of GL(N)}, \rm
Ann. scient. Ec. Norm. Sup. \textbf{37}, 2004, 533--558.

\bibitem{B2}
C. Blondel,  {\it Covers and propagation in symplectic groups}, \rm Functional analysis IX,  16--31, Various Publ. Ser. (Aarhus), 48, Univ. Aarhus, Aarhus, 2007.

\bibitem{BS} C. Blondel and S. Stevens, {\it Genericity of supercuspidal representations of $p$-adic $\Sp4$},
Compositio Math. \textbf{145}(1), 2009,  213-246.

\bibitem{BK} C.J.Bushnell and P.C.Kutzko, {\rm The admissible dual of
{${\rm GL}(N)$} via compact open subgroups}, Annals of Mathematics
Studies 129, Princeton, 1993.

\bibitem{BK1} C.J.Bushnell and P.C.Kutzko, {\it  Smooth representations of reductive $p$-adic   groups: structure theory via types},  Proc. London Math. Soc.     \textbf{77}, 1998,   582--634.

\bibitem{BK2} C.J.Bushnell and P.C.Kutzko, {\it Semisimple types in $\GL_n$},
Compositio Math. \textbf{119}, 1999, 53--97.

\bibitem{GT} W.T. Gan  and S.  Takeda, {\it The Local Langlands Conjecture for $Sp(4)$},
Int. Math. Res. Not., 2010.

\bibitem{G} P. G\'erardin,  {\it Weil representations associated to finite fields},
J. of Algebra  \textbf{46}, 1977, 54--101.

\bibitem{GKS}    D. Goldberg,  P. Kutzko   and   S. Stevens,
{\it Covers for self-dual supercuspidal representations of the Siegel Levi subgroup of
classical p-adic groups},    Int. Math. Res. Not., 2007.

\bibitem{HL} R. Howlett and G. Lehrer, {\it Induced cuspidal representations and generalised Hecke rings}, Invent. Math. \textbf{58}, 1980, 37--64.

\bibitem{J} C. Jantzen, {\it Discrete series for $p$-adic $    \SO(2n)   $
and restrictions of representations of $   {\mathrm O}(2n)$}, to appear.

\bibitem{KM} P. Kutzko and L. Morris, {\it Level zero Hecke algebras and parabolic induction : the Siegel case for split classical groups}, Int. Math. Res. Not., 2006.

\bibitem{Mg}
C. M\oe glin, {\it Normalisation des op\'erateurs d'entrelacement et r\'eductibilit\'e des induites de  cuspidales ; le cas des groupes classiques p-adiques}, Ann. of Math.  \textbf{151}, 2000, 817--847.

\bibitem{M}
L. Morris, {\it Tamely ramified intertwining algebras}, Invent. Math. \textbf{114}, 1993, 1-?54.

\bibitem{N} M. Neuhauser,  {\it An explicit construction of the metaplectic representation over a finite field}, J. of Lie Theory \textbf{12}, 2002, 15--30.

\bibitem{Sh} F. Shahidi, {\it   A proof of Langlands conjecture on Plancherel measure; complementary series for p-adic
groups},  Ann. of Math. \textbf{132}, 1990, 273--330.

\bibitem{S}
A. Silberger, {\it Special representations of reductive p-adic groups are not integrable}, Ann. of Math.
\textbf{111}, 1980, 571--587.

 \bibitem{S1} S. Stevens, {\it Double coset decomposition and intertwining},
manuscripta math.  \textbf{106}, 2001, 349--364.

\bibitem{S4} S.Stevens, {\it Semisimple characters for {$p$}-adic
classical groups}, Duke Math.\ J.\  \textbf{127(1)}, 2005, 123--173.

\bibitem{S5} S.Stevens, {\it The supercuspidal representations of {$p$}-adic classical groups}, Invent. Math.  \textbf{172}, 2008, 289--352.

\bibitem{Sz} F. Szechtman,  {\it Weil representations of the symplectic group},
J. of Algebra \textbf{208}, 1998, 662--686.

\bibitem{Z} Y. Zhang,  {\it Discrete series of classical groups},
Canad. J. Math.  \textbf{52}(5), 2000, 1101--1120.

\end{biblio}

\bigskip

\small

Corinne Blondel

 \indent
Groupes, repr\'esentations et g\'eom\'etrie - Case 7012

\indent
C.N.R.S. - Institut de Math{\'e}matiques de Jussieu - UMR 7586

\indent
Universit{\'e} Paris 7

\indent
F-75205 Paris Cedex 13

\indent
France

\smallskip
Corinne.Blondel@math.jussieu.fr

\end{document}